\documentclass[11pt]{amsart}
\usepackage{amsfonts, bm}
\usepackage{mathrsfs}
\allowdisplaybreaks
\usepackage{hyperref}

\makeatletter

\renewcommand{\tocsection}[3]{%
	\indentlabel{\@ifnotempty{#2}{\bfseries\ignorespaces#1 #2\quad}}\bfseries#3}

\usepackage{amsmath}
\usepackage{amssymb}
\usepackage{multicol}
\usepackage{stmaryrd}
\usepackage{cite}
\usepackage{epsfig}
\usepackage{color}
\usepackage{graphics}
\usepackage{graphicx}
\usepackage{multicol,graphics}
\newcommand\bes{\begin{eqnarray}}
	\newcommand\ees{\end{eqnarray}}

\newtheorem{theorem}{Theorem}[section]
\newtheorem{lemma}[theorem]{Lemma}

\newtheorem{proposition}[theorem]{Proposition}
\numberwithin{equation}{section}

\theoremstyle{plain}
\newtheorem*{proposition*}{Proposition A}

\newcommand\bess{\begin{eqnarray*}}
	\newcommand\eess{\end{eqnarray*}}

\theoremstyle{definition}  
\newtheorem{remark}[theorem]{Remark}

\begin{document}
	
	\title{The high dimensional Monostable reaction-diffusion
		equation  with free boundary and radial 
		symmetry }
	\author[Hongkai Cao]{Hongkai Cao} \author[Jingyi Cui]{Jingyi Cui} \author[Chengzhe Tang]{Chengzhe Tang} \author[Xiaoyan Zhang]{Xiaoyan Zhang}
	\thanks{\hspace{-.5cm}
		$^\dag$  School of Mathematics, Shandong University, Jinan 250100, China.
		\\
		\mbox{\ \  Emails:} chk@mail.sdu.edu.cn (H. Cao),\ \ zxysd@sdu.edu.cn (X. Zhang),\ \ 202411888@mail.sdu.edu.cn(C. Tang),\ \ c15233069201@163.com(J. Cui)}
	\thanks{ 
		The research was partially supported by the Natural Science Foundation of China (No.11571200) and the Natural Science Foundation of Shandong Province (No. ZR2021MA062).
	}
	
	\date{\today}

	\begin{abstract}
We consider the radially symmetric version of the reaction-diffusion equation $u_t-d\Delta u=f(u)$ with a monostable nonlinearity $f$, viewed as a model for the spreading of a species with population range $r<h(t)$ and density $u(t,r)$ ($r=|x|$), where the free boundary $r=h(t)$ is governed by $u(t,h(t))=\delta>0$ and $h'(t)=-d u_r(t,h(t))/\delta$. For the one-dimensional case ($N=1$), Du \cite{DN} proved that when $\delta\in(0,1)$, spreading occurs: $u\to1$ locally uniformly in $\mathbb{R}$, $h(t)\to\infty$, and $\lim_{t\to\infty}[h(t)-c_*t]=\tilde{h}\in\mathbb{R}$ with no logarithmic shift. In the present paper we consider $N\ge2$ and establish a complete trichotomy: spreading for $\delta\in(0,1)$; transition for $\delta=1$, where $u\to1$  uniformly  on  $[0,h(t)]$ and $h(t)\to h_\infty\in(0,\infty)$; and vanishing for $\delta>1$, where $h(t)\to0$ and $u\to\delta$ uniformly on  $[0,h(t)]$. For the spreading regime, by constructing sharp upper and lower solutions, we prove that the solution converges globally to the semi-wave profile and reveal a logarithmic shift of the form 
$
\lim_{t\to\infty}\big[h(t)-c_*t+c_N(\delta)\log t\big]=\hat{h}\in\mathbb{R}$,
with the coefficient $c_N(\delta)>0$ satisfying 
$
\lim_{\delta\to0}c_N(\delta)=d(N-1)/c_0$,
where $d(N-1)/c_0$ is the shift coefficient for the high-dimensional radial pushed-case Cauchy problem.
These results reveal the connection to the spreading behavior modeled by the corresponding Cauchy problem.

		\bigskip

		\noindent \textbf{Keywords}: Reaction diffusion equation; Free boundary; Propagation; Spreading speed.
		\medskip
		
		\noindent\textbf{AMS Subject Classification (2000)}: 35K57, 35R20, 35R35
		
	\end{abstract}
	
	\maketitle

	\section{Introduction and Basic results}

	\subsection{Background}

	In this paper, we mainly consider the following problem
	\begin{equation}\label{1.1}
		\left\{\begin{array}{ll}
			u_{t}-d (u_{r r}+\frac{N-1}{r}u_r)=f(u), & t>0, \ 0<r<h(t) ,\\
			u_r(t, 0)=0, \ u(t, h(t))=\delta, & t>0 ,\\
			h^{\prime}(t)=-\frac{d}{\delta} u_{r}(t, h(t)), & t>0 ,\\
		h(0)=h_{0}, u(0, r)=u_{0}(r), & 0 \leq r \leq h_{0},
		\end{array}\right.
	\end{equation}
	where $N\geq 2$, $d$, $h_{0}$, $\delta>0$,   and 
	the initial function $u_{0}(r)$ is assumed to belong to $\mathcal{X}(h_{0})$ given by
	\[\mathcal{X}\left(h_{0}\right):=\left\{\phi \in C^{2}\left(\left[0, h_{0}\right]\right): \phi(r)>0 \text { in }\left[0, h_{0}\right],\,\phi_r\left(0\right)= 0,\ \phi\left( h_{0}\right)=\delta\right\},\]
and	$f$ is a monostable function
\[
(\mathbf{f_m}): \left\{ f \in C^1 \;\middle|\; f(0)=f(1)=0,\; f'(0)>0>f'(1),\; (1-u)f(u)>0 \ \text{for } u>0,\ u\neq 1 \right\}
\]

Our motivation to  study the long-time dynamical behavior of species invasion in high-dimensional radial version, where $u$ denotes the species density, $d$ is the diffusion coefficient,  $N\geq 2$ is the spatial dimension and $[0, h(t)]$ is the population range. The free boundary condition
 arises from the ``preferred population density'', at which the species maintains a preferred density $\delta>0$; see \cite{DN} for further details.

It is well known that the Cauchy problem 
\begin{equation}\label{cy}
	\begin{cases}
		U_t - d\Delta U = f(U) & \text{for } x \in \mathbb{R}^N, \ t > 0, \\
		U(0, x) = U_0(x) & \text{for } x \in \mathbb{R}^N,
	\end{cases}
\end{equation}
 was studied in the seminal works \cite{Fisher1937} and \cite{KPP1937}, where \( U_0 \) is nonnegative and has nonempty compact support. In this context, although \( U(t, x) > 0 \) for all \( x \in \mathbb{R}^N \) and \( t > 0 \), one can define the level set
\[
H_\delta(t) := \{x : U(t, x) = \delta\}
\]
as the spreading front for a small parameter \( \delta > 0 \), while the set
\[
\Omega_\delta(t) := \{x : U(t, x) > \delta\}
\]
is regarded as the region where the species can be observed. In \cite{AW1978}, it was proved that when spreading occurs, i.e., \( U(t, x) \to 1 \) as \( t \to \infty \), for any \( \epsilon > 0 \), there exists \( T > 0 \) such that
\begin{equation}
	H_\delta(t) \subset \{x \in \mathbb{R}^N : (c_0 - \epsilon)t \leq |x| \leq (c_0 + \epsilon)t\}, \quad t \geq T.
\end{equation}
The constant \( c_0 \), usually called the spreading speed of \eqref{cy}, is determined by the well-known traveling wave problem
\begin{equation}\label{1.4}
	dQ'' - cQ' + f(Q) = 0, \quad Q > 0 \text{ in } \mathbb{R}, \quad Q(-\infty) = 0, \quad Q(+\infty) = 1, \quad Q(0) = 1/2. 
\end{equation}
More precisely, a solution \( Q_c \) of \eqref{1.4} exists if and only if \( c \geq c_0 \).

Logarithmic shifts in reaction-diffusion propagation have a rich history.
When condition \((\mathbf{f_m})\) holds and, moreover, \( f(u) \le f'(0)u \) for \( u\in(0,1) \), there exists a constant \( C \) such that
\begin{equation}
	\lim_{t\to\infty} \max_{x\ge 0} \left| U(t,x) - Q_{c_0}\left( c_0 t - \frac{3}{c_0}\log t - x + C \right) \right| = 0.
\end{equation}
Thus, as \(t\to\infty\), \(U\) converges to the traveling wave profile \(Q_{c_0}\) shifted by a time-dependent logarithmic term, effectively moving with speed \(c_0 - \frac{3}{c_0}t^{-1}\) rather than the constant speed \(c_0\). The term \(\frac{3}{c_0}\log t\) is known as the logarithmic shift; see \cite{M,FJJL} for further details.

In spatial dimension \(N\ge 2\), if \(U_0(x)=U_0(|x|)\) is nonnegative and has nonempty compact support, then the unique solution \(U\) of \eqref{cy} is spherically symmetric, that is, \(U(t,x)=U(t,|x|)\).
When  \((\mathbf{f_m})\) holds, two regimes for the nonlinearity \(f\) are distinguished in \cite{JG,KU}: the pulled case, characterized by \(c_0 = 2\sqrt{df'(0)}\), and the pushed case, for which \(c_0 > 2\sqrt{df'(0)}\). In both cases, a logarithmic shift is present. In the pulled case, the shift coefficient is governed by  \((N+2)/c_0\),
\[
\lim_{t\to\infty} \sup_{x\in\mathbb{R}^N} \left| U(t,|x|) - Q_{c_0}\left( c_0 t - \frac{N+2}{c_0}\log t + C - |x| \right) \right| = 0.
\]
In the pushed case, by contrast, the corresponding shift coefficient reduces to \((N-1)/c_0\),
\[
\lim_{t\to\infty} \sup_{x\in\mathbb{R}^N} \left| U(t,|x|) - Q_{c_0}\left( c_0 t - \frac{N-1}{c_0}\log t + C - |x| \right) \right| = 0.
\]

\begin{proposition*}[Proposition 1.3 of \cite{DN}]
	Suppose that \( \mathbf{(f_m)} \) holds  and \( \delta \in (0, 1) \). Then there exists a unique pair  
	$
	(c, q) = (c_*, q_*)$ with $c_* = c_*^\delta > 0$ satisfying
	\begin{equation}\label{banbo}
		\begin{cases} 
		dq'' - cq' + f(q) = 0, \quad q > 0 \text{ in } (0, \infty), \\ 
		q(0) = \delta,\quad  q(\infty) = 1, \quad q'(0) = c \frac{\delta}{d}.
	\end{cases}
\end{equation}	
 Moreover, \( q_*'(z) > 0 \) for \( z \geq 0 \), \( c_* < c_0 \), and  
	$
	\lim_{\delta \to 0^+} c_*^\delta = c_0$.
\end{proposition*}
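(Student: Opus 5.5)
The plan is a phase-plane shooting argument. Write the ODE as the first-order system $q'=p$, $dp'=cp-f(q)$, and look at trajectories in the strip $0<q<1$, $p>0$. The point $(1,0)$ is a saddle, because $f'(1)<0$. It therefore has a unique unstable-manifold branch... more precisely, a unique stable branch $\Gamma_c$ entering $(1,0)$ from the region $q<1$, $p>0$. We take $p>0$ because solutions that increase to $1$ must approach along the stable direction. We parametrize $\Gamma_c$ as a graph $p=P_c(q)$, defined for $q$ in some maximal interval $(\alpha_c,1)$. Along it
\[
\frac{dP}{dq}=\frac{c}{d}-\frac{f(q)}{dP}.
\]
A solution of \eqref{banbo} with $q'>0$ corresponds exactly to $\Gamma_c$ reaching the line $q=\delta$ at the height $P_c(\delta)=c\delta/d$. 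Conversely, any solution with $q(\infty)=1$ and $q>0$ must be monotone and must lie on $\Gamma_c$. To see this, note that an interior local maximum below $1$ is impossible since $f>0$ there, and overshooting above $1$ is ruled out because $f<0$ for $q>1$ forces divergence. This also yields $q_*'>0$.

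The key steps, in order, are as follows.

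\textbf{Step 1: monotonicity in $c$.} By comparing the slope equations, $P_c(q)$ is strictly increasing in $c$ for each fixed $q$. If two curves met at some $q_1$, then at the largest such point the larger $c$ gives the larger slope. This contradicts the two curves ordering near $q=1$, since linearization at the saddle shows that the stable slope $\lambda_-(c)$ is monotone. Define
\[
G(c)=P_c(\delta)-\frac{c\delta}{d}.
\]

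\textbf{Step 2: behavior of $G$ at the two ends.}

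As $c\to0^+$, the curve $P_c$ tends to the energy curve $P_0(q)^2=\frac{2}{d}\int_q^1 f$. Hence $P_c(\delta)$ stays near $\sqrt{(2/d)\int_\delta^1 f}>0$ while $c\delta/d\to0$, so $G>0$.

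At $c=c_0$, the traveling wave $Q_{c_0}$ of \eqref{1.4} lies exactly on $\Gamma_{c_0}$ and is defined down to $q=0$. We must compare $P_{c_0}(q)$ with the line $cq/d$. Along the line $p=cq/d$, the flow satisfies
\[
\frac{dP}{dq}-\frac{c}{d}=-\frac{f(q)}{dP}<0 .
\]
So a trajectory can cross this line only from above to below as $q$ decreases. Near $q=0$, the wave $Q_{c_0}$ has $P\approx\lambda q$ with $\lambda<c_0/d$, since the decay rates are roots of $d\lambda^2-c\lambda+f'(0)=0$, both of which are less than $c/d$. Thus the wave lies below the line near $0$. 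By the crossing direction, it then lies below the line for all $q\in(0,1)$, giving $G(c_0)<0$.

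\textbf{Step 3: existence and uniqueness.} $G$ is continuous, by continuous dependence of the stable manifold on $c$. The intermediate value theorem then gives a zero $c_*\in(0,c_0)$.

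For uniqueness, I expect the main obstacle. $P_c(\delta)$ and $c\delta/d$ are both increasing in $c$, so monotonicity alone does not give uniqueness. I would instead use the crossing property. Let $c_1<c_2$ both be zeros. Then
\[
P_{c_2}(\delta)=\frac{c_2\delta}{d}>\frac{c_1\delta}{d}=P_{c_1}(\delta).
\]
Rescale with $\tilde p=p/c$ and compare the two trajectories $\tilde P_{c_i}$, which both pass through $(\delta,\delta/d)$. Computing
\[
\frac{d\tilde P}{dq}=\frac1d-\frac{f}{c^2 d\tilde P},
\]
which is increasing in $c$ at a common point, shows that the curves cross only one way. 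Combining this with the behavior near $q=1$, where $\tilde P_c\to0$ and $\tilde P_c\sim\lambda_-(c)(1-q)/c$ with comparable ordering, gives a contradiction. Failing that, I would fall back on citing the uniqueness argument of \cite{DN}.

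\textbf{Step 4: the limit $\delta\to0^+$.} The values $c_*^\delta$ are monotone in $\delta$ by the same comparison as in Step 2. Any limit point $\bar c\le c_0$ must satisfy $P_{\bar c}(0^+)=0$, so $\Gamma_{\bar c}$ reaches $(0,0)$. Then $\bar c$ admits a traveling wave, forcing $\bar c\ge c_0$. Hence $\lim_{\delta\to0^+}c_*^\delta=c_0$.
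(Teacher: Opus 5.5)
Your framework is the same as the paper's: the stable branch $p=P_c(q)$ of the saddle $(1,0)$, and a zero of $G(c)=P_c(\delta)-c\delta/d$. The paper proves the statement as the case $\mu=\mu_0=d/\delta$ of the semi-wave proposition in Section~4.1, where $\eta(c,\mu_0)=G(c)$. However, Step~1 has the monotonicity backwards, and that sign error is exactly what makes uniqueness look like the main obstacle.

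Run your own crossing argument carefully. At a common point the difference of slopes is $(c_2-c_1)/d>0$, so just to the right of the largest crossing the larger-$c$ curve lies \emph{above}. Near the saddle, however, $P_c(q)\sim|\lambda_-(c)|(1-q)$ with $|\lambda_-(c)|=\bigl(\sqrt{c^2-4df'(1)}-c\bigr)/(2d)$ \emph{decreasing} in $c$, so there the larger-$c$ curve lies \emph{below}. Hence the curves never meet, and $P_{c_1}>P_{c_2}$ on $[0,1)$ whenever $0\le c_1<c_2$; compare $P_0(0)>0=P_{c_0}(0)$. This is Lemma~\ref{le3.1}, equivalently the sign in \eqref{4.3}.

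With the correct sign, $G$ is strictly decreasing, so its zero is unique at once, including against zeros with $c\ge c_0$. Indeed, the first line of your uniqueness attempt, $P_{c_2}(\delta)>P_{c_1}(\delta)$ for $c_1<c_2$, already contradicts the correct ordering. The correct ordering also settles a point you left open. For $c\le c_0$ we have $P_c\ge P_{c_0}>0$ on $(0,1)$, so your maximal interval $(\alpha_c,1)$ is all of $(0,1)$ and $G$ is defined on $[0,c_0]$.

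As written, the crux is therefore not established. Step~1 asserts a false statement, and Step~3 stops at ``gives a contradiction'' with a fallback to the literature. Your rescaled argument can in fact be finished independently of Step~1:
\begin{itemize}
\item at a common point $d\tilde P/dq$ is strictly increasing in $c$ because $f>0$ on $(0,1)$, so $\tilde P_{c_2}>\tilde P_{c_1}$ just to the right of $\delta$;
\item $|\lambda_-(c)|/c$ is decreasing, so $\tilde P_{c_2}<\tilde P_{c_1}$ near $1$;
\item hence the first re-crossing in $(\delta,1)$ would have the forbidden orientation.
\end{itemize}
This detour is unnecessary once the sign is fixed.

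Two minor slips. In Step~2, a trajectory can only pass from above to below the line $p=cq/d$ as $q$ \emph{increases}; that is the direction your conclusion actually uses. Also, $P_{c_0}(\delta)<c_0\delta/d$ follows more directly by integrating $P_{c_0}'<c_0/d$ from $P_{c_0}(0)=0$, which is what the paper uses. In Step~4, the monotonicity of $c_*^\delta$ in $\delta$ is not needed. Your limit-point argument, together with continuity of $c\mapsto P_c$ uniformly in $q$, already forces every limit point to equal $c_0$.
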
 
In \cite{DN}, the author considers the one-dimensional case $N=1$ with $\delta\in(0,1)$ and proves that there exists $\tilde{h}$ such that
\[
\begin{cases}
	\displaystyle \lim_{t\to\infty} [h(t) - c_*t] = \tilde{h},\quad \displaystyle \lim_{t\to\infty} h'(t) = c_*, \\[1.2ex]
	\displaystyle \lim_{t\to\infty} \sup_{x\in[0,h(t)]} |u(t,x) - q_*(h(t)-x)| = 0.
\end{cases}
\]
In \cite{DLNS,DHL,CMZ,CDN}, the authors study a more general $f$, consider the regime $\delta\ge 1$, and further investigate moving environment problems, advection problems, and backward wave problems. However,  the high-dimensional case $N\geq2$ remained open.

In the case where $\delta=0$ and the free boundary condition $h^{\prime}(t)=-\frac{d}{\delta} u_{r}(t, h(t))$ is replaced by  $h^{\prime}(t)=-\mu_0 u_{r}(t, h(t))$, the authors of \cite{DL,DHZ} studied the classical Stefan free boundary problem for $N\ge 1$ (for more details on this problem, see \cite{DL,GLZ,DZ}). They proved that there exist constants $c^*>0$, $\zeta>0$, and $H$, independent of $N$, such that
\[
\left\{
\begin{aligned}
	&\lim_{t\to\infty} \left[ h(t) - c^* t + \frac{N-1}{\zeta c^*} \log t \right] = H,\\
	&\lim_{t\to\infty} \sup_{r\in[0,h(t)]} \left| u(t,r) - q^{c^*}\left( c^* t - \frac{N-1}{\zeta c^*} \log t + H - r \right) \right| = 0,
\end{aligned}
\right.
\]
where $q^{c^*}$ is the corresponding semi-wave solution.

In this paper, we consider problem \eqref{1.1} for $N\ge 2$ and establish a trichotomy result in terms of $\delta$: when $\delta\in(0,1)$, spreading occurs, i.e., $h(t)\to\infty$ and $u(t,r)\to 1$ locally uniformly on $[0,\infty)$ as $t\to\infty$; when $\delta=1$, transition occurs, i.e., $h(t)\to h_\infty\in(0,\infty)$ and $u(t,r)\to 1$ uniformly as $t\to\infty$; and when $\delta>1$, vanishing occurs, i.e., $h(t)\to 0$ as $t\to\infty$. Moreover, for $\delta\in(0,1)$, we construct refined upper and lower solutions via perturbation of the semi-wave problem \eqref{banbo}, which yields a logarithmic shift different from that in the one-dimensional case $N=1$.

	\subsection{Main result and arrangement}

	Our main results concerning \eqref{1.1} are listed below.
	\begin{theorem}[\underline{Existence and uniqueness of solution}]\label{th1.1}
		Suppose that $(\mathbf{f_m})$ holds and $\delta>0$. Then, for any given $u_0 \in \mathcal{X}(h_0)$ and $\alpha \in (0, 1)$, \eqref{1.1} admits a unique solution $(u,h)$ for $t\in(0, \infty)$
		\[
		(u,  h) \in C^{1+\frac{\alpha}{2}, 2+\alpha}(\Omega_{\infty}) \times C^{1+\frac{\alpha}{2}}((0, \infty)),
		\]
		where 
		\[
		\Omega_{\infty} := \{(t, r) \in \mathbb{R}^2 : t \in (0, \infty), \, r \in [0, h(t)]\} .
		\]
	\end{theorem}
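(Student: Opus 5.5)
We follow the standard strategy for one-phase Stefan-type problems (Du--Lin, and Du's treatment of the $N=1$ preferred-density problem): straighten the free boundary, obtain local existence and uniqueness by a contraction argument, then extend globally with a priori bounds. The one new ingredient is the Dirichlet datum $\delta$, which we remove by working with $w=u-\delta$.

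\textbf{Step 1: local solution.} Set $s=r/h(t)$ and $v(t,s)=u(t,sh(t))-\delta$. Then $v$ satisfies, on the fixed domain $0<s<1$,
\[
v_t-\frac{d}{h^2}\Big(v_{ss}+\frac{N-1}{s}v_s\Big)-\frac{h's}{h}v_s=f(v+\delta),
\qquad v_s(t,0)=0,\quad v(t,1)=0,
\]
together with $h'(t)=-\frac{d}{\delta h}v_s(t,1)$. Let $h^*=-\frac{d}{\delta}u_0'(h_0)$. For small $T$, work in the closed set $\mathcal{D}_T$ of pairs $(v,h)$ with $h(0)=h_0$, $|h'-h^*|\le1$, $v(0,\cdot)=u_0(\cdot h_0)-\delta$ and $\|v-v(0,\cdot)\|_{C([0,T]\times[0,1])}\le1$.

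Given $(v,h)\in\mathcal{D}_T$, solve the linear problem above with $h$ frozen. The coefficient $\frac{N-1}{s}$ is singular at $s=0$. To handle it, view the equation as the radial form of a uniformly parabolic equation on the unit ball of $\mathbb{R}^N$; there $\Delta$ in radial variables has smooth coefficients. $L^p$ estimates and Sobolev embedding then give a $C^{(1+\alpha)/2,1+\alpha}$ bound on $\tilde v$ depending only on $h_0$, $h^*$ and $\|u_0\|_{C^2}$. Next define $\tilde h$ by $\tilde h(t)=h_0-\frac{d}{\delta}\int_0^t\frac{\tilde v_s(\tau,1)}{h(\tau)}\,d\tau$. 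The H\"older bound shows that $(v,h)\mapsto(\tilde v,\tilde h)$ maps $\mathcal{D}_T$ into itself and is a contraction for $T$ small. Banach's fixed point theorem yields a unique local solution.

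\textbf{Step 2: regularity.} Bootstrap with Schauder estimates on the ball, using $h\in C^{1+\alpha/2}$. This gives $(u,h)\in C^{1+\alpha/2,2+\alpha}\times C^{1+\alpha/2}$ for $t>0$.

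\textbf{Step 3: global existence, the main obstacle.} We need a priori bounds, on any finite interval of existence, that prevent blow-up of $\|u\|_{C^1}$ and of $h'$, and that keep $h$ away from $0$ (the map degenerates when $h\to0$). We plan the following bounds.
\begin{itemize}
\item[(i)] Sup bound: $0<u\le M:=\max\{1,\delta,\|u_0\|_\infty\}$, by the comparison principle, since constants $\ge\max\{1,\delta\}$ are supersolutions.
\item[(ii)] Positivity: $u>0$, from the strong maximum principle and $f(0)=0$.
\item[(iii)] Bound on $h'$: $|h'(t)|\le C$. Use a one-sided barrier near $r=h(t)$, as in Du--Lin. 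If $M>\delta$, compare $u$ on $h(t)-1/k\le r\le h(t)$ with $w=\delta+(M-\delta)\big[2k(h-r)-k^2(h-r)^2\big]$; for $k$ large, chosen in terms of $\|f\|_{C^1}$, $M$ and $\|u_0\|_{C^1}$, this gives $u_r(t,h(t))\ge-C$. A symmetric lower barrier, built from $\min\{\delta,\min u_0\}$ and a positive lower bound for $u$ on compact time intervals, gives $u_r(t,h(t))\le C$. The new feature compared with $\delta=0$ is that $h'$ may be negative. The lower barrier therefore needs a local-in-time positive lower bound for $u$, which follows from (ii) and compactness.
\item[(iv)] $h$ stays positive: on $[0,T]$ we get $h(t)\ge h_0-Ct>0$ for $t<h_0/C$. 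We cannot rule out $h\to0$ merely from these bounds, so on any finite interval $[0,T]$ we compare with an explicit subsolution to keep $h$ bounded below. Alternatively, use the time-dependent estimate $h(t)\ge h_0e^{-Ct}$, derived from an $L^1$ balance on $\int_0^{h}r^{N-1}u\,dr$.
\end{itemize}
With $0<\underline h\le h\le\overline h$ and $|h'|\le C$ on $[0,T]$, Step 1 can be restarted with a uniform step length. Hence the solution extends to all $t>0$.

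Uniqueness holds throughout, since each local solution is the unique fixed point on its interval. We expect the main difficulty to be (iii) and (iv) in Step 3, because $\delta>0$ allows $h$ to shrink.
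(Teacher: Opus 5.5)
Your overall plan matches the paper's: straighten $r=h(t)$, run a contraction, then continue the solution using constant sub/supersolutions, the balance $E'(t)=\int_0^{h}r^{N-1}f(u)\,dr$ to keep $h$ away from $0$, and boundary barriers for $|h'|$. Lifting the radial operator to the unit ball is a cleaner treatment of $\frac{N-1}{s}$ than the paper gives. However, step (iii), which you rightly flag as the crux, is circular as written.

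Both barriers are functions of $h(t)-r$, so their time derivatives carry $h'$ with the unfavourable sign when $h'<0$. For your upper barrier, $w_t=2k(M-\delta)[1-k(h-r)]h'$. You therefore only get $w_t-d(w_{rr}+\frac{N-1}{r}w_r)\ge 2k(M-\delta)\bigl(dk-\max\{-h',0\}\bigr)$, so $k$ cannot be chosen from $\|f\|_{C^1}$, $M$ and $\|u_0\|_{C^1}$ alone: it needs an a priori lower bound on $h'$. For the lower barrier, $\underline u_t=-\underline u_r\,h'>0$ whenever $h'<0$. Verifying it therefore also presupposes a lower bound on $h'$, which is exactly what it is meant to produce.

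The lower bound on $u$ is not the real issue. It is only used at the inner edge of the barrier, and it holds uniformly in $t$ from the constant subsolution $\min\{\frac12,\min u_0\}\le\delta$. By contrast, ``(ii) plus compactness'' gives nothing uniform on $[0,T_{\max})$, which is what continuation needs.

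The missing idea is the first-crossing argument in Step 1 of Lemma 2.6. Take $\underline u=c\delta\bigl[e^{-m(h-r)/c}-1\bigr]+\delta$ with $c\in(0,1)$, so that $\underline u_r(t,h)=\delta m$ and $\underline u_{rr}=\frac{m}{c}\underline u_r$. Choose $m$ large so that $h'(0)>-dm$ and $\underline u(0,\cdot)\le u_0$, and let $T_1$ be the first time at which $h'=-dm$. On $[0,T_1]$ one has $h'\ge-dm$, hence $\underline u_t-d\underline u_{rr}\le d\delta m^2(1-\frac1c)e^{-m(h-r)/c}<0$, and this absorbs $f$ for $m$ large. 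So $\underline u$ is a subsolution there, and comparison plus Hopf give $u_r(T_1,h(T_1))<\delta m$, i.e.\ $h'(T_1)>-dm$, a contradiction.

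Only once $h'\ge-dm$ holds for all $t$ can your upper barrier be verified, taking $k$ larger than $m$ plus a constant depending on $f$, $d$ and $M-\delta$. The two barriers must therefore be done in this order. With this inserted, the rest of your plan goes through.
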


	\begin{theorem}[\underline{Long-time dynamics}]\label{th1.2}
		Suppose that the condition of Theorem~\ref{th1.1} holds.
		\begin{enumerate}
			\item For $0<\delta<1$, 
		then	successful spreading happens:
					\[
				\lim_{t \to \infty}	h(t)=\infty,\quad \text{and}\quad \lim_{t \to \infty}u(t,r)= 1\ \text{locally uniformly in } [0, \infty).
					\]
				\item If $\delta=1$, then transition happens:
				\[\lim_{t \to \infty}h(t)= h_\infty\in(0,\infty)\quad \text{and}\quad \lim_{t\to\infty}u(t,r)=1\ \text{ uniformly in } [0, h(t)].\]
				\item If $\delta>1$, then  vanishing happens:
				\[\lim_{t\to\infty} h(t)=0\quad \text{and}\quad \lim_{t\to\infty}u(t,r)=\delta\ \text{ uniformly in } [0, h(t)]. \]
			\end{enumerate}
	\end{theorem}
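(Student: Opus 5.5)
The plan is to combine three tools: comparison with spatially homogeneous solutions of $\eta'=f(\eta)$, the balance law for the total population, and an $\omega$-limit argument that identifies possible stationary limits through the Pohozaev-type flux identity. Two facts underpin everything. First, the comparison principle for \eqref{1.1}: a radial upper solution $\bar u$ with $\bar u\ge \delta$ on the moving boundary, or a lower solution $\underline u$ with $\underline u\le\delta$ there, compares with $u$ on $[0,h(t)]$. Second, integrating the equation over the ball $B_{h(t)}$ and using $\delta h'=-du_r(t,h)$ gives
\begin{equation*}
\frac{d}{dt}\int_{B_{h(t)}}u\,dx=\int_{B_{h(t)}}f(u)\,dx ,
\end{equation*}
because the boundary flux $d u_r|\partial B_h|$ exactly cancels $\delta h'|\partial B_h|$. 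Let $\bar\eta$ and $\underline\eta$ solve $\eta'=f(\eta)$ with $\bar\eta(0)=\max\{\|u_0\|_\infty,\delta\}$ and $\underline\eta(0)=\min\{\min u_0,\delta\}>0$. By $(\mathbf f_m)$, $\bar\eta\searrow$ or stays at a value $\ge1$ and $\underline\eta\nearrow$ up to $1$, exponentially fast since $f'(1)<0$. Whenever the boundary data allow it, these give $u\le \bar\eta(t)$ (always valid when $\delta\le \bar\eta$) and lower bounds of the form $u\ge\min\{\underline\eta(t),\delta\}$.

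For $\delta=1$, both comparisons are admissible, so $|u-1|\le Ce^{-\mu t}$ uniformly on $[0,h(t)]$. Writing the balance law as $\frac{d}{dt}\big[\delta|B_{h}|+\int_{B_h}(u-1)\big]=\int_{B_h}f(u)$ with $\delta=1$, both $\int_{B_h}(u-1)$ and $\int_{B_h}f(u)$ are bounded by $Ce^{-\mu t}|B_{h(t)}|$. Gronwall then shows that $|B_{h(t)}|$ stays bounded and is Cauchy as $t\to\infty$. Moreover, $|B_{h(t)}|\ge |B_{h(T)}|(1-Ce^{-\mu T})>0$ for large $T$, which yields $h_\infty\in(0,\infty)$.

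For $\delta>1$, the upper comparison gives $\limsup u\le 1<\delta$ in the interior, so eventually $u\le\delta$ and hence $u_r(t,h)\ge0$, $h'\le0$. Thus $h\to h_\infty\ge0$, while $\liminf u\ge 1$ from the lower bound $\min\{\underline\eta,\delta\}\to1$. Suppose $h_\infty>0$. Parabolic estimates after straightening the boundary, together with $h'\to0$ (monotone convergence plus uniform $C^{1+\alpha/2}$ bounds on $h$), give convergence along subsequences to a radial stationary $w$ on $B_{h_\infty}$ with $w(h_\infty)=\delta$, $w'(h_\infty)=0$, and $1\le w\le\delta$. The radial identity $r^{N-1}w'(r)=-\frac1d\int_0^r s^{N-1}f(w)\,ds$ evaluated at $r=h_\infty$ forces $\int_0^{h_\infty}s^{N-1}f(w)=0$. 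But $f(w)\le0$ with equality only where $w=1$, which forces $w\equiv1$ and contradicts $w(h_\infty)=\delta>1$. Hence $h\to0$. For $u\to\delta$ uniformly, I would rescale $r=h(t)\rho$. The oscillation of $u$ on $[0,h(t)]$ is controlled by $h(t)\|u_r\|_\infty$ and the uniform gradient bound, so $\max|u-\delta|\le Ch(t)\to0$. Getting that gradient bound uniform as the domain collapses is one technical point.

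For $\delta\in(0,1)$, I would argue by contradiction that $h$ is unbounded. If $h$ were bounded, the balance law shows that $\int_{B_h}u$ is bounded, and eventually $u\le1+\varepsilon$ and $u\ge\min\{\underline\eta,\delta\}$. Integrating the balance law in time should then give $\int^\infty\!\!\int_{B_h}f(u)<\infty$, and so $h'\to0$ along suitable times. Any $\omega$-limit stationary $w$ on $B_{h_\infty}$ satisfies $w(h_\infty)=\delta$, $w'(h_\infty)=0$, $w\le1$, $w\not\equiv1$. Then $\int_0^{h_\infty}s^{N-1}f(w)>0$, contradicting the flux identity above. Once $h\to\infty$ (monotonicity of $h$ follows because $h$ cannot decrease through a level it later exceeds, by comparison with a shifted copy), local convergence $u\to1$ follows by comparing from below with the Dirichlet solution on $B_R$ starting from a small positive bump, which converges to the positive steady state $\phi_R\to1$ as $R\to\infty$, and from above with $\bar\eta$. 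The main obstacle is the $\omega$-limit step in the spreading case: showing that bounded $h$ really forces $h'\to0$ and convergence to a steady state without a priori monotonicity of $h$. I would first try to extract this from the balance law and Lyapunov-type integrability, and fall back on a zero-number argument for $u_t$ if needed.
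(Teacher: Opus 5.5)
There is a genuine gap in case (1), $0<\delta<1$: you never actually establish $h(t)\to\infty$. Your $\omega$-limit route needs $h'\to0$ and a stationary limit, and, as you note yourself, neither is available without monotonicity of $h$.

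\begin{itemize}
\item Integrability of $\int_{B_{h}}f(u)$ does not by itself control $h'$.
\item The remedies you list (``Lyapunov-type integrability'', zero number for $u_t$) are not carried out.
\item The ``shifted copy'' argument does not work: comparing $u(t+\tau,\cdot)$ with $u(t,\cdot)$ requires data ordered at some time, which you do not have.
\item Mere unboundedness of $h$ would not give $h\to\infty$, and your Dirichlet comparison on $B_R$ needs the latter.
\end{itemize}

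The missing observation is already in your toolkit. Your lower bound $u\ge\min\{\underline\eta(t),\delta\}$ equals $\delta$ once $\underline\eta(t)\ge\delta$. Indeed, that is how the bound is proved: compare with $\underline\eta$ up to the time $T_0$ with $\underline\eta(T_0)=\delta$, and afterwards with the constant $\delta$, a strict lower solution because $f(\delta)>0$. Since $u(t,h(t))=\delta$, this gives $u_r(t,h(t))\le0$, i.e.\ $h'(t)\ge0$ for $t\ge T_0$, which is the paper's Lemma~\ref{nle3.1}.

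Then $h$ converges. If $h_\infty<\infty$, uniform H\"older bounds on $h'$ give $h'\to0$. Any limit $\tilde U$ of the straightened solutions along $t_n\to\infty$ is then an \emph{entire} (not necessarily stationary) solution with $\tilde U\ge\delta$, $\tilde U(t,1)=\delta$, $\tilde U_s(t,1)=0$. Since $w=\tilde U-\delta\ge0$ satisfies $w_t-\frac{d}{h_\infty^2}\Delta_s w-\kappa w=f(\delta)>0$ with a bounded coefficient $\kappa$, the strong maximum principle and the Hopf lemma force $\tilde U_s(t,1)<0$, a contradiction. Stationarity is never needed.

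Your balance law would also close the argument directly. If $h$ were bounded, the upper barrier in the proof of Lemma~\ref{le2.6} gives $u\le\delta+C(h(t)-r)$ near $r=h(t)$. This keeps $\delta\le u\le(1+\delta)/2$ on a boundary layer of fixed width, where $f(u)\ge c_1>0$, while the negative part of $f(u)$ is at most $Ce^{-\mu t}$. So $\int_{B_{h(t)}}u$ would grow linearly, which is impossible.

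There are also smaller problems in case (3). The assertion ``the upper comparison gives $\limsup u\le1<\delta$ in the interior'' is false; the theorem itself says $u\to\delta>1$ uniformly. The bound $u\le\bar\eta(t)$ is valid only while $\bar\eta(t)\ge\delta$. As in the paper, stop at the time $T_0$ with $\bar\eta(T_0)=\delta$, and then use the constant $\delta$, a strict upper solution since $f(\delta)<0$, to get $u\le\delta$ and $h'\le0$ for $t\ge T_0$.

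Your claim that the $\omega$-limit is stationary is again unproved, but it is unnecessary. The entire limit with $1\le\tilde U\le\delta$, $\tilde U(t,1)=\delta$, $\tilde U_s(t,1)=0$ contradicts the Hopf lemma. Alternatively, your balance law on the fixed limit ball gives $\frac{d}{dt}\int_0^1 s^{N-1}\tilde U\,ds\le-c_2<0$ for all $t\in\mathbb{R}$, which is impossible for a bounded entire solution.

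The uniform gradient control you need for $u\to\delta$ is supplied by the two barriers in the proof of Lemma~\ref{le2.6}. Once $h(t)$ is small they cover all of $[0,h(t)]$ and give $|u-\delta|\le Ch(t)$, which is how the paper concludes.

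Finally, your treatment of $\delta=1$ is correct and genuinely different from the paper's. The paper uses the zero number to get eventual monotonicity of $h$, a comparison with a one-dimensional Stefan problem for boundedness, and a separate integral argument for $h_\infty>0$. Your Gronwall estimate on $\int_{B_{h(t)}}u$, using $|u-1|\le Ce^{-\mu t}$, yields existence, finiteness and positivity of $h_\infty$ in one stroke.
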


	To more precisely describe the long-time behavior of the species when spreading happens, we require the following result. 
	\begin{proposition}[\underline{Semi-wave}]\label{th1.3}
		Suppose that the condition of Theorem~\ref{th1.1} holds with $\delta\in(0,1)$.  There exists $\mu^*>\mu_0:=\frac{d}{\delta}$ such that 
		if $\mu\in(0, \mu^*]$, there exists a unique pair $(c, q) = (c_*(\mu), q_{c_*(\mu)})$ satisfying
		\begin{equation}\label{1.3}
			\begin{cases}
				d q'' - c q' + f(q) = 0, \quad q' > 0 \quad \text{in } (0, \infty), \\[4pt]
				q(0) = \delta, \quad q(\infty) = 1, \quad q'(0) = \frac{c}{\mu}.
			\end{cases}
		\end{equation}
		Moreover,
		$ c_*(\mu) \in (0, c_0)$ and $c_*(\mu_0)=c_*^\delta$.
	\end{proposition}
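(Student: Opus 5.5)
Throughout, fix $\delta\in(0,1)$. We use the standard phase-plane reduction. Set $p=q'$ and view $p$ as a function of $q\in[\delta,1]$. Then \eqref{1.3} with $q'>0$ becomes
\[
d\,p\frac{dp}{dq}-c\,p+f(q)=0,\qquad p(1)=0,\quad p>0 \text{ on }[\delta,1),
\]
together with the boundary condition $p(\delta)=c/\mu$.

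\textbf{Step 1: the unstable manifold at $(1,0)$.} For each $c\ge0$ the point $(q,p)=(1,0)$ is a saddle of the system $q'=p$, $dp'=cp-f(q)$, because $f'(1)<0$. There is a unique trajectory $p=P_c(q)$ entering $(1,0)$ from the region $q<1$, $p>0$. Classical arguments (as in Du–Lin and Du–Lou for semi-waves) give the following.
\begin{itemize}
\item $P_c(q)$ is continuous in $c$.
\item $P_c(q)$ is strictly decreasing in $c$, by comparison of the slopes $dp/dq=c/d-f/(dp)$.
\item Since $f>0$ on $(0,1)$ and $c<c_0$, the trajectory stays positive on $(0,1)$. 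This holds because for $c<c_0$ it cannot reach $p=0$ at some $q\in(0,1)$, nor connect to $(0,0)$, since $c_0$ is the minimal speed in \eqref{1.4}.
\item $P_c$ stays positive at $q=\delta$ for all $c\in[0,c_0]$, and $P_{c_0}(q)\to0$ as $q\to0$.
\end{itemize}
So $g(c):=P_c(\delta)$ is continuous, positive and strictly decreasing on $[0,c_0]$.

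\textbf{Step 2: solving the shooting equation.} We need $g(c)=c/\mu$, that is,
\[
\Phi(c):=\mu\,g(c)-c=0 .
\]
At $c=0$ we have $\Phi(0)=\mu g(0)>0$. Moreover $\Phi$ is strictly decreasing, so any root is unique. At $c=c_0$ we get $\Phi(c_0)=\mu P_{c_0}(\delta)-c_0$. This is negative exactly when
\[
\mu<\mu^*:=\frac{c_0}{P_{c_0}(\delta)}.
\]
Hence for $\mu\in(0,\mu^*)$ there is a unique root $c_*(\mu)\in(0,c_0)$.

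To include the endpoint as the statement does (with $c_*<c_0$), one should define $\mu^*$ as any fixed value strictly below $c_0/P_{c_0}(\delta)$ that exceeds $\mu_0$; the statement only asserts existence of some $\mu^*>\mu_0$. The corresponding $q$ is recovered by integrating $q'=P_{c_*}(q)$ from $q(0)=\delta$. Then $q'>0$, and $q(\infty)=1$ because $(1,0)$ is reached only as $z\to\infty$ (it is a saddle).

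\textbf{Step 3: $\mu^*>\mu_0$ and $c_*(\mu_0)=c_*^\delta$.} For $\mu=\mu_0=d/\delta$, the condition $q'(0)=c/\mu_0=c\delta/d$ is exactly that of \eqref{banbo}. Proposition A gives a solution with $c_*^\delta<c_0$, so $\Phi_{\mu_0}(c_*^\delta)=0$. Since $\Phi_{\mu_0}$ is strictly decreasing, this forces $\Phi_{\mu_0}(c_0)<0$, i.e. $\mu_0 P_{c_0}(\delta)<c_0$. Therefore $c_0/P_{c_0}(\delta)>\mu_0$, and $\mu^*$ can be chosen strictly between them. Uniqueness then identifies $c_*(\mu_0)=c_*^\delta$.

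\textbf{Main obstacle.} The delicate point is Step 1. We must show rigorously that $P_c$ is strictly monotone in $c$ and remains positive on $[\delta,1)$ for all $c\le c_0$. Positivity relies on the characterization of $c_0$ as the minimal speed in \eqref{1.4}: if $P_c$ hit $p=0$ inside $(0,1)$ for some $c\le c_0$, monotonicity in $c$ would force $P_{c_0}$ to do so as well, contradicting existence of $Q_{c_0}$.
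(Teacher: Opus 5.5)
Your proposal is correct and follows essentially the same route as the paper. Both use the phase-plane trajectory $P_c$ on $[0,c_0]$ (taken from Du--Lin) and the shooting function $\eta(c,\mu)=P_c(\delta)-c/\mu$ (your $\Phi$ is $\mu\eta$), with the intermediate value theorem plus strict monotonicity in $c$ giving a unique $c_*(\mu)\in(0,c_0)$.

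The only real difference is how you show $\mu^*>\mu_0$. The paper gets $P_{c_0}(\delta)<c_0\delta/d$ directly, by integrating $P_{c_0}'=c_0/d-f/(dP_{c_0})<c_0/d$ from $P_{c_0}(0)=0$. You instead deduce it from Proposition A and the monotonicity of $\Phi_{\mu_0}$, which is valid but less self-contained.

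Incidentally, the trajectory entering $(1,0)$ as $z\to\infty$ is the stable manifold, not the unstable one.
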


	With the help of Proposition \ref{th1.3}, we can obtain a more precise long-time behavior below. In particular, $c_*(\mu_0)$ is the asymptotic spreading speed  $h(t)$.
	\begin{theorem}[\underline{Precise progagation profile}]\label{th1.4}
		Suppose that the condition of Theorem \ref{th1.1} holds and $\delta\in(0,1)$, then there exist $\hat{h}\in \mathbb{R}$ such that 
		\[
		\begin{cases}
			\lim_{t \to \infty} [h(t) - c_* t+c_N (\delta) \log t] = \hat{h}, \quad \lim_{t \to \infty} h'(t) = c_*, \\
			\lim_{t \to \infty} \sup_{r \in [0, h(t)]} \left|u(t, r) - q_*\left( h(t) - r\right)\right| = 0, \\
		\end{cases}
		\]
		where $(c_*=c_*(\mu_0)=c_*^\delta,q_*=q_{c_*(\mu_0)})$ is the unique solution of \eqref{1.3} with $\mu=\mu_0=\frac{d}{\delta}$ and 
\[
c_N (\delta)= d(N-1) c_*^{-1}
\left[
1 + \delta^2 c_*(\mu_0)
\left(
d \int_0^\infty
\bigl( q_{c_*(\mu_0)}'(z) \bigr)^2
e^{-c_*(\mu_0) d^{-1} z} \, dz
\right)^{-1}
\right]^{-1}.
\]

	\end{theorem}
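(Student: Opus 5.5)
The strategy is a moving-frame analysis near the free boundary, in the spirit of \cite{DL,DHZ}. Put $z=h(t)-r$ and $v(t,z)=u(t,h(t)-z)$. Then $v$ satisfies
\[
v_t-dv_{zz}+\Big(h'(t)+\frac{d(N-1)}{h(t)-z}\Big)v_z=f(v),\qquad v(t,0)=\delta,\qquad h'(t)=\frac{d}{\delta}v_z(t,0).
\]
Near the front the curvature term acts as an extra drift $\sigma(t):=d(N-1)/h(t)\approx d(N-1)/(c_*t)$. The crucial point is that this drift enters the equation but not the Stefan-type condition.

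\textbf{Step 1: the perturbed semi-wave problem.} To absorb $\sigma$, I look for a profile $q$ solving
\[
dq''-(c+\sigma)q'+f(q)=0,\qquad q(0)=\delta,\qquad q'(0)=c\delta/d .
\]
Writing $C=c+\sigma$, this is exactly \eqref{1.3} with speed $C$ and parameter $\mu$ determined by $C/\mu=(C-\sigma)\delta/d$. This gives $\mu=\mu_0C/(C-\sigma)=\mu_0(1+\sigma/c_*+O(\sigma^2))$. By Proposition \ref{th1.3}, for $\sigma$ small we have $\mu<\mu^*$, and hence
\[
c=c_*(\mu)-\sigma=c_*-\sigma\Big(1-\frac{\mu_0c_*'(\mu_0)}{c_*}\Big)+O(\sigma^2).
\]

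So I must show that $\mu\mapsto c_*(\mu)$ is $C^1$ near $\mu_0$ and compute $c_*'(\mu_0)$. I plan to do this by an implicit function argument in the phase plane (the trajectory into $(1,0)$ depends smoothly on $c$). Then I differentiate \eqref{1.3} in $\mu$ and test the linearized equation against the adjoint solution $q_*'(z)e^{-c_*z/d}$. Integrating by parts, the boundary terms at $z=0$ involve $q'(0)=c/\mu$, $q(0)=\delta$ and $\mu_0=d/\delta$. This should yield
\[
1-\frac{\mu_0c_*'(\mu_0)}{c_*}=\Big[1+\delta^2c_*\Big(d\int_0^\infty (q_*')^2e^{-c_*z/d}\,dz\Big)^{-1}\Big]^{-1}.
\]
Consequently $h'\approx c_*-c_N(\delta)/t$, which explains the constant $c_N(\delta)$ in the statement. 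Since $c_*\to c_0$ and the integral blows up as $\delta\to0$, this also gives $c_N(\delta)\to d(N-1)/c_0$.

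\textbf{Step 2: sub- and supersolutions.} Spreading is already known from Theorem \ref{th1.2}(1). I will build upper and lower solutions of the form
\[
\bar h(t)=c_*t-c_N\log t\pm K t^{-\beta}+H,\qquad \bar u=(1\pm Me^{-\beta t})\,q_{\mu(t)}(\bar h(t)-r)\ \text{(with a cutoff near $r=0$)}.
\]
Here $q_{\mu(t)}$ is the profile of Step 1 with $\sigma=d(N-1)/\bar h(t)$, and the error terms decay like $t^{-1-\beta}$. The residual comes from three sources: $\partial_t\mu(t)=O(t^{-2})$, the difference $1/(\bar h-z)-1/\bar h$ (controlled using the exponential approach $q\to1$), and the region $r\le \bar h/2$, where one compares with $1\pm Me^{-\beta t}$ using $f'(1)<0$. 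Applying the comparison principle for \eqref{1.1} after a time shift then gives $|h(t)-c_*t+c_N\log t|\le C$ and $\sup|u-q_*(h-r)|\to0$ up to bounded shifts.

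\textbf{Step 3: convergence.} To pass from boundedness to convergence of $h(t)-c_*t+c_N\log t$, I take any sequence $t_n\to\infty$ and shift. By parabolic estimates the shifted solutions converge to an entire solution of the one-dimensional free boundary problem, trapped between two translates of $q_*$. A Liouville/sliding argument as in \cite{DHZ} shows that this limit is a translate of $q_*$. A squeezing argument, which re-inserts the upper and lower solutions of Step 2 at large times with shifts differing by arbitrarily small amounts, then gives uniqueness of the limit $\hat h$. The convergence $h'\to c_*$ follows from the $C^{1+\alpha/2}$ bounds.

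\textbf{Main obstacle.} I expect the difficulty to lie in Step 2: making the $O(1/t)$ terms cancel exactly, so that the logarithmic coefficient is precisely $c_N(\delta)$ and not just bounded. This requires the first-order expansion of $c_*(\mu)$ from Step 1 together with uniform (in $\mu$) exponential bounds on $q_\mu'$ and $\partial_\mu q_\mu$. I would first secure these uniform bounds via the phase-plane representation.
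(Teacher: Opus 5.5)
Your Step 1 is the paper's Section 4.1 in other coordinates. Your equation speed $c+\sigma$ and parameter $\mu$ are the paper's $g(\xi)$ and $\mu(\xi)$ with $\xi=c$. The adjoint identity you describe gives $1-\mu_0c_*'(\mu_0)/c_*=[1-g'(c_*)]^{-1}$, so your constant coincides with \eqref{cn}. Your Step 3 is the paper's Section 4.4.

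Two small corrections. First, the remainder $O(\sigma^2)$ requires $c_*\in C^2$ (which the paper proves), not just $C^1$: an $o(\sigma)=o(t^{-1})$ error need not be beaten by your correction $\beta Kt^{-1-\beta}$. Second, in your aside on $\delta\to0$, the integral $\int_0^\infty(q_*')^2e^{-c_*z/d}\,dz\le(1-\delta)\max q_*'$ stays bounded rather than blowing up. What is needed is that it is large compared with $\delta^2$; this aside is not part of the theorem anyway.

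The genuine gap is the lower solution in Step 2. Its amplitude correction $Me^{-\beta t}$ is exponentially small, but the residuals it must absorb are only polynomially small. Write $z=\underline h(t)-r$. Two terms of $\underline u_t-d\underline u_{rr}-\frac{d(N-1)}{r}\underline u_r-f(\underline u)$ have the unfavourable sign:
\begin{itemize}
\item $q_\mu\dot\mu>0$ (since $q_\mu<0$ and $\dot\mu<0$), of order $t^{-2}$;
\item the curvature mismatch $d(N-1)\bigl(\frac1r-\frac1{\underline h}\bigr)q'(z)$, which on $r\ge\underline h/2$ is comparable to $t^{-2}zq'(z)$.
\end{itemize}
Your front correction contributes $-\beta Kt^{-1-\beta}q'(z)$, and this absorbs the mismatch only for $z\lesssim t^{1-\beta}$. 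Beyond that, the only favourable term is the amplitude term, of size about $Me^{-\beta t}$. Since $q'(z)\ge ce^{-\Lambda z}$ for some $\Lambda>0$, at $z=At^{1-\beta}$ (with $A$ large, still inside $r\ge\underline h/2$) the mismatch is at least of order $t^{-1-\beta}e^{-\Lambda At^{1-\beta}}\gg e^{-\beta t}$. So the differential inequality fails there, for every choice of $M$, $K$ and exponential rate.

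The fix is to take the amplitude polynomially small, so that it dominates the $O(t^{-2})$ residuals but is dominated by the front-speed correction. One option is $t^{-\gamma}$ with $1+\beta<\gamma<2$. Another is the paper's choice in Lemma \ref{le4.6}: $\underline u=q-t^{-2}\log t$ with $\underline k=c_*t-c_N\log t+Bt^{-1}\log t$ and $B$ large, posed on the window $z\le M\log t$.

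The inner-edge comparison then needs $u\ge 1-(\text{amplitude})$ at the inner edge. This does not follow from $f'(1)<0$ alone. It requires the rough bounds of Lemma \ref{le4.4} and \eqref{04.13}: $h(t)\ge ct$, and exponential convergence of $u$ to $1$ behind the front. Your plan does not supply these.

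Finally, $(1\pm Me^{-\beta t})q(\bar h-r)$ equals $(1\pm Me^{-\beta t})\delta\neq\delta$ at $r=\bar h$. So the free boundary has to be redefined as the $\delta$-level set and the Stefan inequality checked there, as the paper does. Your upper solution escapes the main problem, because both troublesome terms have the favourable sign for it.
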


\begin{remark}
	We observe that this logarithmic shift is absent in one-dimensional case~\cite{DN}. In fact, its emergence originates from the geometric term $\frac{d(N-1)}{r}u_r$ in the radial Laplacian: the pair $(q_*(c_*t-r),\, c_*t)$ is no longer a particular solution of \eqref{1.1}.
	Our shift coefficient differs from that of the Stefan type free boundary problem in~\cite{DHZ}, owing to the different semi-wave profiles. It should be emphasized that, as $\delta\to0$, our model does not degenerate into the problem in~\cite{DHZ}; rather, it establishes a connection with the classical Cauchy problem. Indeed, while the shift coefficients differ, they converge  precisely to the pushed-case shift coefficient of the Cauchy problem as  $\delta\to0$.
	
Fix $h_0>0$ and suppose $U(t,r)$ is the unique solution of \eqref{cy} with initial data 
\[
U(0,r)\in\left\{\phi \in C^{2}\left([0,h_0]\right): \phi(r)>0 \text{ in } [0,h_0],\ \phi(r)\equiv 0 \text{ for } r\ge h_0,\ \phi_r(0)=0<\phi_r(h_0)\right\}.
\]
Let $(u_\delta,h_\delta)$ be the unique solution of \eqref{1.1} with $u_\delta(0,r)=u_0^\delta(r)\in \mathcal{X}(h_0)$.

If $\lim_{\delta\to0}\|u_0^\delta(\cdot)-U(0,\cdot)\|_{C^1([0,h_0])}=0$, then
\[
(u_\delta, h_\delta) \to (U, +\infty)
\quad\text{in}\quad
C^{1,2}_{\mathrm{loc}}((0,\infty)\times \mathbb R)
\times C_{\mathrm{loc}}((0,\infty))
\quad\text{as } \delta\to0.
\]
Since the proof is analogous to that in \cite{DN}, we omit the details. Moreover,  Theorem \ref{th1.4} gives that
\[
\lim_{\delta\to0} c_*(\mu_0)=\lim_{\delta\to0} c_*^\delta=c_0, \qquad
\lim_{\delta\to0} c_N(\delta)=\frac{d(N-1)}{c_0}.
\]
This shows that, in the limit $\delta\to0$, the model \eqref{1.1} ``degenerates'' into the pushed-case Cauchy problem \eqref{cy}.

\end{remark}
	
The rest of this paper is organized as follows. In Section~2, we present some important lemmas, including the comparison principle, zero-number properties, and a priori estimates of solutions, and then obtain the global existence of the solution. Section~3 establishes the trichotomy theorem via upper and lower solutions and zero-number arguments. In Section~4, by applying a suitable perturbation to the semi-wave solution in Proposition \ref{th1.3}, we obtain the precise propagation profile when spreading occurs. In Section~4.1, we describe how the constant \(c_N(\delta)\) in the logarithmic shifting term is defined. Sections~4.2 and~4.3 provide bounds for \(h(t)-c_*t+c_N(\delta)\log t\), and in Section~4.4 we obtain the convergence result.	In Section 5, we prove the local existence and uniqueness.

	\section{Basic results}  
	
	\subsection{Local existence and uniqueness, comparison principle, and  zero number}
	\begin{theorem}\label{th2.1}
		Suppose that $f\in C^1$ with $f(0)=0$, $N\geq 2$, and $\delta>0$. Then for any given $u_0 \in \mathcal{X}(h_0)$ and any $\alpha \in (0, 1)$, there exists $T > 0$ such that problem \eqref{1.1} admits a unique solution
		\[
		(u(t,r),  h(t)) \in C^{\frac{1+\alpha}{2}, 1+\alpha}(\bar{\Omega}_T) \times C^{1+\frac{\alpha}{2}}([0, T])
	 \mbox{ with }
		\Omega_T := \{(t, r) \in \mathbb{R}^2 : t \in (0, T], \, r \in [0, h(t)]\}.
		\]
		Moreover,
		\[
		\begin{cases}
			\|u(t,r)\|_{C^{\frac{1+\alpha}{2}, 1+\alpha}(\bar{\Omega}_T)} +  \|h(t)\|_{C^{1+\frac{\alpha}{2}}([0, T])} \leq C, \\[6pt]
			h(t) \in C^{1+\frac{1+\alpha}{2}}((0, T]), \quad u(t,r) \in C^{1+\frac{\alpha}{2}, 2+\alpha}(\Omega_T), \quad u(t,r) > 0 \text{ in } \Omega_T,
		\end{cases}
		\]
		Here, $C$ and $T$ depend only on $h_0$, $N$, and $\|u_0\|_{C^2([0, h_0])}$.
	\end{theorem}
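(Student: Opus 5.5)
We will follow the classical straightening-plus-contraction scheme of Du--Lin and Chen--Friedman, modified for the Dirichlet condition $u=\delta$ at the free boundary and the singular radial term.

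\emph{Fixing the domain and removing the boundary value.} Set $w=u-\delta$, so that $w(t,h(t))=0$, $w_r(t,0)=0$ and $h'(t)=-\frac{d}{\delta}w_r(t,h(t))$. Change variables by $s=r/h(t)$, which maps $[0,h(t)]$ onto the fixed interval $[0,1]$. Writing $v(t,s)=w(t,sh(t))$, we get
\[
v_t-\frac{d}{h^2}\Big(v_{ss}+\frac{N-1}{s}v_s\Big)-\frac{s h'(t)}{h(t)}v_s=f(v+\delta),\qquad v_s(t,0)=0,\ v(t,1)=0,
\]
together with $h'(t)=-\frac{d}{\delta h(t)}v_s(t,1)$. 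The singular term $\frac{N-1}{s}v_s$ will be handled by regarding $v(t,|y|)$ as a radial function on the unit ball $B_1\subset\mathbb{R}^N$. Then the operator is just $h^{-2}\Delta_y$ plus a drift $\frac{h'}{h}\,y\cdot\nabla_y$, and the Neumann condition at $s=0$ holds automatically. This gives access to the $W^{1,2}_p$ and Schauder theory on $B_1$.

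\emph{Contraction.} Let $T$ be small and consider the closed set
\[
\Sigma_T=\Big\{(v,h): \|v-v_0\|_{C^{\frac{1+\alpha}{2},1+\alpha}}\le 1,\ h(0)=h_0,\ h'(0)=h_1^*,\ \|h'-h_1^*\|_{C^{\alpha/2}}\le 1\Big\},
\]
where $v_0=u_0-\delta$ (composed with the scaling) and $h_1^*=-\frac{d}{\delta}u_0'(h_0)$. Given $(v,h)\in\Sigma_T$, the coefficients are H\"older continuous and $h\ge h_0/2$. We solve the linear problem with the right-hand side $f(v+\delta)$ and obtain $\tilde v$. $L^p$ theory with $p$ large, followed by Sobolev embedding, bounds $\|\tilde v\|_{C^{\frac{1+\alpha}{2},1+\alpha}}$ by a constant $C$ depending only on $h_0,N,\|u_0\|_{C^2}$. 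We then set
\[
\tilde h(t)=h_0-\int_0^t \frac{d}{\delta h}\tilde v_s(\tau,1)\,d\tau .
\]
Because $\tilde v-v_0$ vanishes at $t=0$ and is H\"older in $t$, one gets $\|\tilde v-v_0\|\le CT^{\gamma}$, and similarly for $\tilde h'$. Hence the map sends $\Sigma_T$ into itself once $T$ is small. The difference of two images satisfies a linear problem whose data are bounded by $\|v_1-v_2\|+\|h_1-h_2\|_{C^1}$. The same estimates then give a contraction factor $CT^{\gamma}<1/2$. The fixed point is the local solution, and uniqueness within $\Sigma_T$ follows from the contraction itself. Any solution in the stated class lies in $\Sigma_T$ for $T$ small, which upgrades this to full uniqueness.

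\emph{Regularity and positivity.} Since $h'\in C^{\alpha/2}$ and $v\in C^{\frac{1+\alpha}{2},1+\alpha}$, interior-in-time Schauder estimates give $v\in C^{1+\frac\alpha2,2+\alpha}$ for $t>0$. It follows that $h'=-\frac{d}{\delta h}v_s(\cdot,1)\in C^{\frac{1+\alpha}{2}}((0,T])$. Positivity: $f(0)=0$ with $f\in C^1$ allows $f(u)=c(t,r)u$ with $c$ bounded. The boundary value is $u=\delta>0$ and $u_0>0$, so the strong maximum principle on the ball gives $u>0$.

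\emph{Expected main obstacle.} The hardest step should be the uniform $C^{1+\alpha}$ estimate near $s=1$ that controls $v_s(t,1)$, because it feeds the free boundary law. This is needed because the Dirichlet-type condition forces $h'$ to depend on the gradient. We will handle it with the standard $W^{1,2}_p$ boundary estimate on $B_1\times(0,T)$ with $p>N+2$, using the compatibility $v_0(1)=0$ and $v_{0,s}(0)=0$.
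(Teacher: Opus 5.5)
Your overall scheme matches the paper's. You straighten by $s=r/h(t)$, freeze $(v,h)$ in the coefficients and in $f$, solve the linear problem by $W^{1,2}_p$ estimates plus Sobolev embedding, and define $\tilde h$ from $\tilde v_s(t,1)$. You then contract for small $T$, use interior Schauder estimates for $t>0$, and get positivity from the maximum principle. Subtracting $\delta$ is cosmetic. Lifting $v$ to a radial function on $B_1\subset\mathbb{R}^N$, however, is a real improvement: the paper applies one-dimensional $L^p$ theory to an operator with the unbounded coefficient $\frac{N-1}{s}$ without comment, while on $B_1$ the operator is just $h^{-2}\Delta_y$ plus a bounded drift. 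The price is that the embedding into $C^{\frac{1+\alpha}{2},1+\alpha}$ then needs $p>(N+2)/(1-\alpha)$, not merely $p>N+2$. The other difference is your metric. You contract in a ball of $C^{\frac{1+\alpha}{2},1+\alpha}$ for $v$ and $C^{\alpha/2}$ for $h'$. The paper contracts in $C(D_T)\times\{h:\|h'-h^0\|_{L^\infty}\le H\}$ and uses the H\"older bounds only as a priori estimates.

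That choice of metric leaves a gap as written. The inference ``$\tilde v-v_0$ vanishes at $t=0$ and is H\"older in $t$, hence $\|\tilde v-v_0\|\le CT^{\gamma}$'' holds in the sup norm; this is exactly the paper's estimate $\|\bar U-u_0\|_{C(D_T)}\le K_1T^{\frac{1+\alpha}{2}}$. It fails in the $C^{\frac{1+\alpha}{2},1+\alpha}(D_T)$ norm that defines $\Sigma_T$. For example, $t^{\frac{1+\alpha}{2}}\psi\big((s-s_0)/\sqrt t\big)$ with $\psi$ smooth and compactly supported has zero initial trace, yet the spatial $\alpha$-seminorm of its $s$-derivative equals $[\psi']_\alpha$ for every $t$. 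The same objection hits your contraction factor $CT^{\gamma}$, and your claim that every solution in the stated class lies in $\Sigma_T$ for small $T$.

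To repair this within your metric, embed $W^{1,2}_p$ into $C^{\frac{1+\beta}{2},1+\beta}$ for some $\beta>\alpha$ (possible, since $p$ is free). Then interpolate against the zero initial trace to gain a factor $T^{(\beta-\alpha)/2}$. You also need the $L^p$ and embedding constants to be uniform in $T$, which you do not address. The paper secures this by extending $(U,h)$ constantly in time to a fixed interval $[0,T_1]$. For uniqueness, you can let the radius of the ball depend on the given solution, with $T$ depending on that radius. More simply, contract in the paper's $C^0$-type metric, where any solution in the stated class lies in the ball for small $T$ by continuity alone.
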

To make the paper read more smoothly, we put the proof  in Section 5.

	 Consider
	\begin{equation}\label{5.10}
		\eta_{t}=a(t, r) (\eta_{rr}+\frac{N-1}{r}\eta_r)+b(t, r) \eta_{r}+c(t, r) \eta \ \mbox{ for } t\in(t_1,t_2) ,\ r\in( \xi_{1}(t),\xi_{2}(t) ),
	\end{equation}
	where  $\xi_{1}$  and  $\xi_{2}$  are continuous functions in  $\left(t_{1}, t_{2}\right)$. For each  $t \in\left(t_{1}, t_{2}\right) $, denote by
	\begin{equation}
		\mathcal{Z}(t):=\#\{r \in \bar{\Omega}(t) \mid \eta(t, \cdot)=0\}
	\end{equation}
	the number of zeroes of  $\eta(t, \cdot)$  in the interval  $\bar{\Omega}(t):=\left[\xi_{1}(t), \xi_{2}(t)\right] $. A point  $r_{0} \in \bar{\Omega}(t)$  is called a multiple zero (or degenerate zero) of  $\eta(t, \cdot) $, if  $\eta\left(t, r_{0}\right)=\eta_{r}\left(t, r_{0}\right)=0 $.

	\begin{lemma}[Zero number diminishing properties \cite{Ag, LBD, GLZ}]\label{0} Assume the coefficients in \eqref{5.10} satisfy
		$	a,\, a^{-1},\, a_{t},\, a_{r},\, b,\, c \in L^{\infty}.
		$
		Let  $\eta$  be a nontrivial  $W_{p, l o c}^{2,1}$  solution of \eqref{5.10}. Further, for $i=1,\,2,$ we suppose that for $ t \in (t_1, t_2)$
	\[
	\begin{cases}
		\xi_i(t) \in C^{1} \text{ and } \eta(t,\xi_i(t)) \equiv 0,
	\quad	\text{or}\\[4pt] 
		\xi_i(t) \in C^{1} \text{ and } \eta_r(t,\xi_i(t)) \equiv 0, \quad \text{or}
		\\[4pt]
	 \xi_i(t) \in C^{0} \text{ and } \eta(t,\xi_i(t)) \neq 0.
	\end{cases}
	\]
		Then
		\begin{enumerate}
			\item $ \mathcal{Z}(t)$  is finite and decreasing in  $t \in\left(t_{1}, t_{2}\right) $,
			\item  if  $s \in\left(t_{1}, t_{2}\right)$  and  $r_{0} \in \bar{\Omega}(s)$  is a multiple zero of  $\eta(s, \cdot) $, then  $\mathcal{Z}\left(s_{1}\right)>\mathcal{Z}\left(s_{2}\right)$  for all  $s_{1}$, $s_{2}$  satisfying  $t_{1}<s_{1}<s<s_{2}<t_{2} $.
		\end{enumerate}

	\end{lemma}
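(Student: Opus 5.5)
The plan is to reduce the statement to the classical one-dimensional zero-number results of Angenent \cite{Ag} (with the moving-boundary versions in \cite{LBD, GLZ}). The only new feature is the singular drift $a(t,r)\frac{N-1}{r}\eta_r$.

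\textbf{Step 1: away from the origin.} On any time interval $[s_1,s_2]\subset(t_1,t_2)$ with $\xi_1(t)\ge\varepsilon>0$, write \eqref{5.10} as
\[
\eta_t=a\eta_{rr}+\tilde b\,\eta_r+c\eta,\qquad \tilde b:=b+\frac{(N-1)a}{r}.
\]
Then $\tilde b\in L^\infty$ there, and $a,a^{-1},a_t,a_r,c\in L^\infty$ are unchanged. The boundary alternatives are exactly those in \cite{Ag, LBD}:
\begin{itemize}
\item Dirichlet-type $\eta(t,\xi_i(t))\equiv0$ with $\xi_i\in C^1$;
\item Neumann-type $\eta_r(t,\xi_i(t))\equiv0$ with $\xi_i\in C^1$;
\item non-vanishing $\eta(t,\xi_i(t))\ne0$ with $\xi_i\in C^0$.
\end{itemize}
In the first two cases I straighten the boundary by $y=(r-\xi_1(t))/(\xi_2(t)-\xi_1(t))$. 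This only adds a bounded drift term $-\dot\xi\,\eta_r$-type contribution, since $\xi_i\in C^1$. In the third case the zeros stay a positive distance from the boundary by continuity, so no boundary condition is needed. Angenent's theorem then gives (1) and (2) on such intervals. Because the conclusions are local in time, covering $(t_1,t_2)$ by such intervals handles every situation where $\xi_1$ stays away from $0$.

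\textbf{Step 2: the origin, the main obstacle.} Near the origin $\tilde b$ is unbounded, and I expect this step to be the main difficulty. Here $\xi_1\equiv0$ and $\eta_r(t,0)=0$, i.e.\ the Neumann alternative holds at the origin.

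First I would try to view $\eta$ as a radial function $\tilde\eta(t,x)=\eta(t,|x|)$ on a ball in $\mathbb{R}^N$. It satisfies $\tilde\eta_t=a\Delta\tilde\eta+b\,\frac{x}{|x|}\cdot\nabla\tilde\eta+c\tilde\eta$, which is a uniformly parabolic equation with bounded coefficients. Parabolic unique continuation and the local nodal structure near a zero, as in the Chen--Pol\'a\v{c}ik treatment of radial solutions used in \cite{GLZ}, then show three things:
\begin{itemize}
\item zeros of $\eta(t,\cdot)$ cannot accumulate at $r=0$;
\item a zero at $r=0$ (which is automatically multiple, since $\eta_r(t,0)=0$) exists only at isolated times;
\item whenever such a zero occurs, the count $\mathcal Z$ strictly drops.
\end{itemize}
The drop is seen by expanding $\tilde\eta$ near $(s,0)$ as a caloric-type polynomial of even degree in $r$. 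The resulting local zero count is larger before time $s$ than after it, while Step 1 controls all zeros with $r\ge\varepsilon$.

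If the $N$-dimensional route proves awkward, the fallback is the substitution $\rho=r^2$ near $0$. This converts $\eta_{rr}+\frac{N-1}{r}\eta_r$ into $4\rho\,\eta_{\rho\rho}+2N\eta_\rho$, i.e.\ a degenerate equation with a Fuchsian-type endpoint. For that equation I would repeat Angenent's local argument with explicit Laguerre/Bessel-type polynomial solutions replacing Hermite polynomials.

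\textbf{Step 3: gluing.} Finally I combine the two regions. Finiteness of $\mathcal Z(t)$ follows from finiteness on $[\varepsilon,\xi_2(t)]$, by Step 1, together with non-accumulation at $0$, by Step 2. Monotonicity and the strict drop at multiple zeros hold locally in both regions, hence on all of $(t_1,t_2)$, which yields (1) and (2).
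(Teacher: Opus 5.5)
Your proposal is correct and is essentially the paper's approach: the paper gives no argument of its own and simply invokes Angenent's zero-number theorem together with its moving-boundary extensions \cite{Ag, LBD, GLZ}, which is exactly what your Steps 1 and 3 reduce to. Your Step 2 is a useful addition rather than a different route, because it makes explicit what that citation glosses over: the drift $\frac{(N-1)a}{r}$ is unbounded at the Neumann endpoint $r=0$ (the case used for $\delta=1$), so that endpoint falls outside Angenent's bounded-coefficient hypotheses and needs the radial (Chen--Pol\'a\v{c}ik type) version of the theory.
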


In the following, we present two comparison principle lemmas. Since the proofs are similar to those in \cite{DN}, we omit the details here.
	\begin{lemma}\label{le2.2}
		Suppose that $f$ is $C^1$ with $f(0) = 0$,  $N\geq 2$, $\delta>0$,  $T \in (0, \infty)$, $\bar{g}(t), \bar{h}(t) \in C^1([0, T])$, and $\bar{u}(t,r) \in C(\bar{D}_T) \cap C^{1,2}(D_T)$, where 
		\[
		D_T := \{(t, r) \in \mathbb{R}^2 : 0 < t \leq T, \, \bar{g}(t) < r < \bar{h}(t)\},
		\]
		and $0\leq \bar{g}(t) \leq h(t) $ for $t\in[0, 	T]$. Assume further that
		\[
		\begin{cases}
			\bar{u}_t(t,r) - d (\bar{u}_{rr}(t,r)+\frac{N-1}{r}\bar{u}_r(t,r)) \geq f(\bar{u}(t,r)), & 0 < t \leq T, \, 0 < r < \bar{h}(t), \\
			\bar{u}(t,\bar{g}(t)) \geq u(t,\bar{g}(t)), & 0 \leq t \leq T, \\
			\bar{u}(t,\bar{h}(t)) = \delta, \, \bar{h}'(t) \geq -\frac{d}{\delta} \bar{u}_r(t,\bar{h}(t)), & 0 < t \leq T, \\
			\bar{u}(t,h(t)) \geq \delta \quad\text{if } h(t) \leq \bar{h}(t) , & \\
			\bar{h}(0)>h_0, \,  \bar{u}(0, r)\geq\not\equiv u_0(r), & r \in [\bar{g}(0), h_0],
		\end{cases}
		\]
		where $(u, h)$ is the solution to \eqref{1.1}. Then, for $t\in(0, T]$ we have 
			\[	\bar{h}(t) > h(t) , \ \mbox{ and }\
			\bar{u}(t, r) > u(t, r) \ \mbox{ for }\ \bar{g}(t) < r < h(t).\]
	
	\end{lemma}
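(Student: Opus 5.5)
The proof I have in mind is the usual first-touching-time argument. Set
\[
t^*:=\sup\{s\in(0,T]:\ \bar h(t)>h(t)\ \text{on }[0,s]\}.
\]
Since $\bar h(0)>h_0=h(0)$ and both functions are continuous, $t^*>0$. On the region
\[
\{(t,r):0<t\le t^*,\ \bar g(t)<r<h(t)\}
\]
I will put $w:=\bar u-u$ and show $w>0$ there. After that I will show $t^*=T$, with strict inequality $\bar h(T)>h(T)$.

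The first step is the inequality $w>0$, using the parabolic maximum principle.

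\textbf{Equation for $w$.} For $t\le t^*$ we have $h(t)\le\bar h(t)$, so $u$ and $\bar u$ are both defined on the region above, and $w$ satisfies
\[
w_t-d\Big(w_{rr}+\frac{N-1}{r}w_r\Big)\ge c(t,r)\,w,\qquad c=\int_0^1 f'\big(u+\theta(\bar u-u)\big)\,d\theta .
\]
Here $c$ is bounded because $f\in C^1$ and $u,\bar u$ are bounded on compact time intervals.

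\textbf{Boundary data.} On the left boundary, $w(t,\bar g(t))\ge 0$. On the right boundary $r=h(t)$, we have $u=\delta$ and, by the fourth hypothesis, $\bar u\ge\delta$ whenever $h\le\bar h$, so $w(t,h(t))\ge0$. The initial data are $w(0,\cdot)\ge\not\equiv0$.

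\textbf{Degeneracy at $r=0$.} If $\bar g(t)=0$ for some $t$, the term $\frac{N-1}{r}$ is singular. I will handle this by viewing $u$ and $\bar u$ as radial functions on a moving ball or annulus in $\mathbb R^N$. The radial Laplacian is then a regular operator, and the condition at $r=\bar g=0$ becomes an interior condition.

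Straightening the domain with $y=r/h(t)$ gives a uniformly parabolic equation with bounded coefficients. The weak maximum principle then gives $w\ge0$, and the strong maximum principle gives $w>0$ in the interior for $t\in(0,t^*]$; here I use that $w(0,\cdot)\not\equiv0$.

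The second step is to rule out a touch at $t^*<T$. Suppose $t^*<T$, so that $\bar h(t^*)=h(t^*)$. Then $\bar h(t)-h(t)>0$ on $[0,t^*)$ and vanishes at $t^*$, hence
\[
\bar h'(t^*)\le h'(t^*).
\]
At the point $(t^*,h(t^*))$ we have $w=\delta-\delta=0$, while $w>0$ to the left of it. Hopf's boundary lemma, applied on the straightened domain with the $C^1$ boundary $h$, gives $w_r(t^*,h(t^*))<0$, that is, $\bar u_r<u_r$ at that point. Therefore
\[
\bar h'(t^*)\ \ge\ -\frac d\delta\,\bar u_r\ >\ -\frac d\delta\,u_r\ =\ h'(t^*),
\]
which contradicts $\bar h'(t^*)\le h'(t^*)$. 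So $t^*=T$. The same reasoning at $t=T$ excludes $\bar h(T)=h(T)$, which gives $\bar h(t)>h(t)$ on $(0,T]$ and $\bar u>u$ on $\bar g(t)<r<h(t)$.

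The step I expect to be the main obstacle is Hopf's lemma. It needs interior positivity up to $t^*$ near $r=h(t^*)$, and a boundary regular enough for an interior-ball or barrier argument. I would get regularity of $h$ from Theorem~\ref{th2.1} ($h\in C^{1+\alpha/2}$) together with the assumption $\bar h\in C^1$. I would construct the barrier in the straightened variables, where the lateral boundary is flat and the standard parabolic Hopf lemma applies directly. If $\bar g(t)$ ever meets $h(t)$ there is nothing to prove on that time slice; otherwise the argument is local near the right endpoint, so the left boundary plays no role.
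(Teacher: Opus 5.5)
Your argument is the standard first-touching-time proof, and it is the one the paper has in mind; the paper omits the proof and refers to \cite{DN}. You compare $w=\bar u-u$ on $\{\bar g(t)<r<h(t)\}$ up to $t^*$, using the hypothesis $\bar u(t,h(t))\ge\delta$ to get $w\ge0$ on $r=h(t)$. Then the strong maximum principle and Hopf's lemma at $(t^*,h(t^*))$ contradict $\bar h'(t^*)\le h'(t^*)$.

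A minor point concerns your passage to $\mathbb{R}^N$ when $\bar g=0$. It is not needed, and treating $x=0$ as an interior point would require $\bar u$ to be a supersolution across the origin, which is not assumed. In the radial variable, $r=0$ is then simply a lateral boundary where $w\ge0$ is prescribed. At a negative interior minimum one has $r>0$ and $w_r=0$, so the singular term $\frac{N-1}{r}w_r$ drops out. The strong maximum principle and Hopf's lemma are only used at points with $r>0$.

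What is wrong is your closing remark that if $\bar g(t)$ meets $h(t)$ there is nothing to prove on that time slice. The conclusion $\bar h(t)>h(t)$ is still asserted there, and your argument genuinely needs $\bar g<h$ in two places:
\begin{itemize}
\item the strong maximum principle can carry $w(0,\cdot)\not\equiv0$ forward only through a region that does not pinch;
\item Hopf's lemma at $(t^*,h(t^*))$ needs a nonempty region to the left of $h(t^*)$.
\end{itemize}

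In fact the statement is false in this case. Take $f$ satisfying $(\mathbf{f_m})$, so $f<0$ on $(1,\infty)$. Set $\bar g:=h$, $\bar h(t):=h_0+\eta+\frac{dA}{\delta}t$ and $\bar u(t,r):=\delta+A(\bar h(t)-r)$, with $dA^2/\delta\ge\max_{[0,1]}f$. Then:
\begin{itemize}
\item $\bar u_t-d(\bar u_{rr}+\frac{N-1}{r}\bar u_r)=\frac{dA^2}{\delta}+\frac{d(N-1)A}{r}\ge f(\bar u)$ for $r>0$;
\item $\bar h'=-\frac{d}{\delta}\bar u_r$;
\item $\bar u(t,h(t))\ge\delta$ as long as $h\le\bar h$;
\item the initial condition on the one-point set $[\bar g(0),h_0]$ reads $\delta+A\eta>\delta$.
\end{itemize}
Now choose $u_0\in\mathcal{X}(h_0)$ with $-u_0'(h_0)>2A$. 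Then $h'\ge\bar h'+\frac{dA}{2\delta}$ on some $[0,\tau]$, so for $\eta$ small $h$ reaches $\bar h$ at some $t^*<\tau$. With $T=t^*$ every hypothesis holds, but $\bar h(T)=h(T)$.

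So the lemma has to be read with $\bar g(t)<h(t)$ on $[0,T]$, which is how it is used in the paper (e.g.\ $\bar g\equiv1$ in Lemma~\ref{le4.7}). Under that hypothesis your proof goes through as written.
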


	\begin{lemma}\label{le2.3}
		Suppose that $f$ is $C^1$ with $f(0) = 0$,  $N\geq 2$, $\delta>0$, $T \in (0, \infty)$, $ \bar{h}(t) \in C^1([0, T])$, and $\bar{u}(t,r) \in C(\bar{D}_T) \cap C^{1,2}(D_T)$, where 
		\[
		D_T := \{(t, r) \in \mathbb{R}^2 : 0 < t \leq T, \, 0 < r < \bar{h}(t)\}.
		\]
		Assume further that
		\[
		\begin{cases}
			\bar{u}_t(t,r) - d (\bar{u}_{rr}(t,r)+\frac{N-1}{r} \bar{u}_r(t,r) )\geq f(\bar{u}(t,r)), & 0 < t \leq T, \, 0 < r < \bar{h}(t), \\
			\bar{u}_r(t,0) =0, \, 	\bar{u}(t,\bar{h}(t)) = \delta, & 0 < t \leq T,  \\
		  \bar{h}'(t) \geq -\frac{d}{\delta} \bar{u}_r(t,\bar{h}(t)), & 0 < t \leq T,  \\
			\bar{u}(t,h(t)) \geq \delta \quad\text{if } h(t) \leq \bar{h}(t), & \\
	\bar{h}(0)>h_0, \,  \bar{u}(0, r)\geq \not\equiv u_0(r), & r \in [0, h_0],
		\end{cases}
		\]
		where $(u,  h)$ is the solution to \eqref{1.1}. Then,  for $t\in(0, T]$ we have 
		\[
	h(t)< \bar{h}(t) \ \mbox{ and }\
		u(t, r) < \bar{u}(t, r) \  \mbox{ for }\  0 < r < h(t).
		\]
	\end{lemma}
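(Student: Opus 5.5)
We argue by contradiction, in the spirit of the proof of Lemma~\ref{le2.2} (and of the comparison principle in \cite{DN}). Two facts hold at $t=0$: $\bar h(0)>h_0$, and $\bar u(0,r)\ge\not\equiv u_0(r)$ on $[0,h_0]$. By continuity, $h(t)<\bar h(t)$ for small $t>0$. On that short interval the function $w:=\bar u-u$ satisfies a linear parabolic inequality on $\{0<r<h(t)\}$:
\[
w_t-d\Big(w_{rr}+\frac{N-1}{r}w_r\Big)\ge c(t,r)\,w,
\]
where $c$ is the bounded difference quotient of $f$. Its boundary data are $w_r(t,0)=0$ and $w(t,h(t))=\bar u(t,h(t))-\delta\ge0$, using the hypothesis $\bar u(t,h(t))\ge\delta$ whenever $h(t)\le\bar h(t)$. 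The strong maximum principle then gives $w>0$ in $0<r<h(t)$ for small $t>0$. The singular term at $r=0$ is handled as usual by viewing $w$ as a radial function on the ball in $\mathbb R^N$, where $\Delta$ is a regular operator and the Neumann condition at the origin is automatic.

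Next, let $T^*:=\sup\{s\in(0,T]: h(t)<\bar h(t)\ \text{for } t\in(0,s]\}$. We show $T^*=T$, with $w>0$ throughout. On $(0,T^*)$ the argument above gives $w>0$ in $0<r<h(t)$.

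Suppose $T^*<T$, or that $T^*=T$ with $h(T)=\bar h(T)$. Then at $t^*=T^*$ we have $h(t^*)=\bar h(t^*)$ and $h(t)<\bar h(t)$ for $t<t^*$, so $h'(t^*)\ge\bar h'(t^*)$. At the common boundary point both functions equal $\delta$, so $w(t^*,h(t^*))=0$. Since $w>0$ inside the region, the Hopf lemma applies at the lateral boundary point; it is legitimate because the boundary curve is $C^1$ (indeed $h\in C^{1+\alpha/2}$). Hence $w_r(t^*,h(t^*))<0$, that is, $\bar u_r<u_r$ there. Combining this with the free boundary condition gives
\[
\bar h'(t^*)\ge-\frac d\delta\bar u_r(t^*,\bar h(t^*))>-\frac d\delta u_r(t^*,h(t^*))=h'(t^*),
\]
which contradicts $h'(t^*)\ge\bar h'(t^*)$. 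Thus $h<\bar h$ on $(0,T]$, and the maximum principle once more gives $u<\bar u$ for $0<r<h(t)$.

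The main technical point is the initial step when $w(0,\cdot)$ is only $\ge\not\equiv0$. The strong maximum principle still yields $w>0$ for $t>0$ provided $w(t,h(t))\ge0$. A short-time continuity argument keeps $h<\bar h$ initially, since $\bar h(0)>h_0$ strictly. A second point is regularity of $u$ near $t=0$ and $r=0$. Theorem~\ref{th2.1} supplies $u\in C^{1+\alpha/2,2+\alpha}$ for $t>0$, which is sufficient for the Hopf argument at interior times $t^*>0$.
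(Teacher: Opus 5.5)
Your proof is correct and essentially follows the paper, which omits the argument and says it is similar to \cite{DN}: the first-touching-time plus Hopf-lemma argument you give is that standard proof. You also make the two adaptations this setting needs: the hypothesis $\bar u(t,h(t))\ge\delta$ supplies the lateral boundary data for $w=\bar u-u$, and viewing $w$ as a radial function in $\mathbb{R}^N$ handles the singular term at $r=0$.
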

	The function $\bar{u}$ or the triple $(\bar{u}, \bar{h})$ in Lemmas \ref{le2.2} and \ref{le2.3} is usually referred to as an upper solution of the problem. A lower solution can be defined by reversing all inequalities in the appropriate places. There is also a symmetric version of Lemma \ref{le2.2}, where the conditions on the left and right boundaries are interchanged. Corresponding comparison results hold for lower solutions in each case.

	\subsection{ 
		A priori bounds and global existence of the solution
	}

	To analyze the global existence of solutions, we define $	E(t):=\int_{0}^{h(t)} r^{N-1} u(t, r) d r$.
	\begin{lemma}\label{le2.4}
		Suppose that $(\mathbf{f_m})$ holds, $N\geq 2$, $\delta>0$, and $T \in(0, \infty)$. If $(u,  h)$ is a solution to \eqref{1.1} for $0 < t < T$, then there exist  positive constants $C_{0}$  and $C_1$, independent of $T$, such that  
		\[
		C_{0} \leq u(t, r) \leq C_{1} \quad \text{for } t \in (0, T) \text{ and } r \in [0, h(t)].
		\]
		Moreover, 
		\[\inf_{t\in[0, T)} h(t)>0.\]
	\end{lemma}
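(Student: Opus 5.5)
The plan is to compare $u$ with spatially constant sub- and supersolutions, which are solutions of the ODE $w'=f(w)$. For the upper bound, set $C_1:=\max\{1,\delta,\max_{[0,h_0]}u_0\}$. Since $f(u)<0$ for $u>1$ by $(\mathbf{f_m})$, the constant $C_1$ is a supersolution of the equation. For the lower bound, set $C_0:=\min\{1,\delta,\min_{[0,h_0]}u_0\}$, which is positive because $u_0>0$ on the compact interval $[0,h_0]$. Since $f(u)>0$ for $u\in(0,1)$, the constant $C_0$ is a subsolution. The boundary value at $r=h(t)$ is $\delta\in[C_0,C_1]$, and the Neumann condition at $r=0$ is compatible with constants.

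The comparison will be done on the fixed domain $\{(t,r):0<t<T,\ 0\le r\le h(t)\}$, treating $h$ as given, so that no free-boundary comparison lemma is needed. I will straighten the boundary by $y=r/h(t)$; this is legitimate on any $[0,T']$ with $T'<T$, since $h$ is continuous and positive there by Theorem~\ref{th2.1}. Then $w=u-C_1$ satisfies a linear parabolic inequality $w_t-\mathcal{L}w\le c(t,r)w$ with $c$ bounded, where $c=\int_0^1 f'(C_1+s w)\,ds$ is bounded because $u$ is bounded on $[0,T']$. Moreover $w\le 0$ on the parabolic boundary $r=h(t)$ and at $t=0$, while $w_r=0$ at $r=0$. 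The singular term $\frac{N-1}{r}w_r$ is handled by viewing $u$ as a radial function on the ball in $\mathbb{R}^N$. The maximum principle then gives $w\le0$, i.e.\ $u\le C_1$; the same argument gives $u\ge C_0$. Since $T'<T$ is arbitrary and $C_0,C_1$ do not depend on $T$, the bounds hold on $(0,T)$.

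For $\inf h>0$, I argue by contradiction using the boundary flux. Suppose $h(t_n)\to0$ along a sequence. The idea is an energy/mass argument with $E(t)$. Differentiating and using the equation,
\[
E'(t)=h^{N-1}\delta h'+\int_0^h r^{N-1}\bigl(d(r^{N-1}u_r)_r/r^{N-1}+f(u)\bigr)dr = h^{N-1}\delta h' + d h^{N-1}u_r(t,h)+\int_0^h r^{N-1}f(u)\,dr .
\]
By the Stefan-type condition the first two terms cancel, so $E'(t)=\int_0^h r^{N-1}f(u)\,dr$. Since $u\in[C_0,C_1]$, this gives $|E'|\le M h^N/N$ with $M=\max_{[C_0,C_1]}|f|$, while $E\ge C_0h^N/N$ and $E\le C_1h^N/N$. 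Hence $(\log E)'\ge -M/C_0$, so $E(t)\ge E(0)e^{-Mt/C_0}$. This alone does not bound $h$ from below uniformly in $T$, so I will sharpen the argument near small $h$. When $h$ is small, $u$ is nearly constant. Suppose the equilibrium value is at least $1$ (the case $\delta\ge1$); then $f(u)\le0$ and $E$ decreases, which is the real difficulty. To handle it, I would use instead $V(t)=\int_0^{h}r^{N-1}\,dr=h^N/N$, for which $\delta V'=\delta h^{N-1}h'=-dh^{N-1}u_r(t,h)$. The goal is to show $u_r(t,h(t))\le0$, i.e.\ $h$ cannot shrink, whenever $h$ is below some threshold depending only on $C_0,C_1,f$. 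I expect this to be the main obstacle, since for $\delta>1$ Theorem~\ref{th1.2} asserts $h\to0$. The statement only concerns finite $T$, however, so $T$-independence of the infimum cannot hold in that case. I would therefore settle for the finite-$T$ version, deducing positivity from $E(t)\ge E(0)e^{-MT/C_0}$ together with $E\le C_1h^N/N$. This gives $h(t)^N\ge N E(0)e^{-MT/C_0}/C_1>0$ on $[0,T)$, which is exactly what is needed for global existence via Theorem~\ref{th2.1}.
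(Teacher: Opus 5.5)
Your proof is correct and follows the paper's route. You use constant lower and upper solutions (constants below $\min\{1,\min u_0\}$ and above $\max\{1,\max u_0\}$) for the two-sided bound. You then use the mass identity $E'(t)=\int_0^{h}r^{N-1}f(u)\,dr$ with a Gronwall bound, where your $|f|\le M$ on $[C_0,C_1]$ with $E\ge C_0h^N/N$ replaces the paper's equivalent $f(u)\ge -Mu$, combined with $E\le C_1h^N/N$. The statement only asserts $\inf_{[0,T)}h>0$, not a $T$-independent bound, so your contradiction setup and the $V(t)$ detour can be dropped; your remark that a uniform bound is impossible for $\delta>1$ is consistent with this.
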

	\begin{proof}
		Clearly, the constant functions $\underline{u}(t, r) \equiv C_0:=\min\{1/2, \min_{r\in [0,h_{0}]} u_0(r)\}$ and $\bar{u}(t, r) \equiv C_1:=\|u_{0}\|_{L^{\infty}([0, h_0])} + 1$ serve as a lower and an upper solution, respectively, for problem \eqref{1.1} over the set $\{(t, r): t \in [0, T), r \in [0, h(t)]\}$. By the  comparison principle, we thus obtain the uniform bound
		\[
		C_0 \leq u(t, r) \leq C_{1} \quad \text{for } t \in [0, T),\, r \in [0, h(t)].
		\]
		This completes the proof of the first part.

By $(\mathbf{f_m})$, there exists $M>0$ such that $f(u)\geq -Mu$ for $u\in[0, C_1]$.		Direct calculation gives 		
\begin{align*}
	E'(t) &= \int_0^{h} r^{N-1}u_t\,dr + h'(t)h^{N-1}u(t,h) \\
	&= \int_0^{h} r^{N-1}\Bigl[d\bigl(u_{rr}+\tfrac{N-1}{r}u_r\bigr)+f(u)\Bigr]dr - d\,h^{N-1}u_r \\
	&= \int_0^{h} \Bigl[d\bigl(r^{N-1}u_{rr}+(N-1)r^{N-2}u_r\bigr)+r^{N-1}f(u)\Bigr]dr - d\,h^{N-1}u_r \\
	&= \int_0^{h(t)} r^{N-1}f(u)\,dr\geq -M E(t).
\end{align*}
		It follows that
		\[
		E(t) \geq E(0)e^{-Mt} > 0 \quad \text{for } t \in (0, T).
		\]
		Hence,
			$
		\inf_{t \in[0, T)} E(t) \geq E(0)e^{-MT} > 0$.
		Clearly this implies
	$
		\inf_{t \in[0, T)}h(t) > 0$.
	\end{proof}

	\begin{lemma}\label{le2.6}
		Suppose that $(\mathbf{f_m})$ holds,  $N\geq 2$, $\delta>0$,  $(u, h)$  is a solution to \eqref{1.1} defined for  $t \in[0, T)$  with $T \in\left(0, \infty\right) $. Then there exists  $C_2>0$ independent of $T$ such that
		\begin{equation}
		 \left|h^{\prime}(t)\right| \leq C_{2} \,\text{ for } \,t\in [0, T).
		\end{equation}
	\end{lemma}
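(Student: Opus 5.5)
We bound $|h'(t)| = \frac{d}{\delta}|u_r(t,h(t))|$ by controlling $u_r$ at the free boundary from both sides. We do this with barrier functions localized near $r=h(t)$, in the spirit of the classical Stefan-problem argument of Du--Lin and of \cite{DN}. The inputs are the uniform bounds $C_0\le u\le C_1$ from Lemma~\ref{le2.4} and the fact that $h_*:=\inf_{t\in[0,T)}h(t)>0$. We also use $\sup_{t<T}h(t)<\infty$, which is harmless: since the estimate is local in time near the boundary, we only need $h$ bounded on $[0,T)$, and this follows from the time-local Hölder bounds of Theorem~\ref{th2.1}.

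\emph{Upper bound for $u_r(t,h(t))$.} Note that $h'=-\frac d\delta u_r$ may have either sign, so both sides are needed. Fix $\ell=\min\{h_*/2,1\}$ and work on the moving strip $\Omega_\ell=\{h(t)-\ell<r<h(t)\}$. On this strip $r\ge h_*/2$, so the drift term $\frac{(N-1)}{r}$ is bounded by $2(N-1)/h_*$. Introduce the comparison function
\[
w(t,r)=\delta-M\bigl[2\ell(h(t)-r)-(h(t)-r)^2\bigr]/\ell^2 .
\]
It satisfies $w=\delta$ at $r=h$, and $w=\delta-M$ at $r=h-\ell$. Choosing $M\ge \delta-C_0$ gives $w\le u$ on the inner boundary $r=h-\ell$.

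\emph{Lower bound for $u_r(t,h(t))$.} Symmetrically, use
\[
v=\delta+M\bigl[2\ell(h-r)-(h-r)^2\bigr]/\ell^2 ,
\]
with $M\ge C_1-\delta$. This gives $v\ge u$ on $r=h-\ell$.

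\emph{Verifying the differential inequalities.} Working in the moving coordinate $s=h(t)-r$ avoids needing $h'$ in the time derivative. More precisely, $w_t$ contains the factor $h'$ times $w_s$, and $h'(t)=-\frac d\delta u_r=\frac d\delta u_s$ at $s=0$. To decouple this, set $\eta(t,s)=u(t,h(t)-s)$ and compare $\eta$ with the time-independent profiles $\delta\mp M(2\ell s-s^2)/\ell^2$. The equation for $\eta$ is
\[
\eta_t - d\eta_{ss} + \Bigl(h' + \tfrac{d(N-1)}{h-s}\Bigr)\eta_s = f(\eta).
\]
The term $h'\eta_s$ is the obstacle, since $h'$ is precisely the unknown.

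\emph{Main difficulty and how to handle it.} The coefficient $h'$ is not yet known to be bounded, so the comparison above is not immediately closed. I would first try the energy identity already used in Lemma~\ref{le2.4}, which controls $E(t)$, to deduce that $h$ is bounded on $[0,T)$. Then, for the sign-relevant part, I would argue directly in the original variables with the barrier $\bar u=\delta+M[2\ell(h(t)-r)-(h(t)-r)^2]/\ell^2$. Its time derivative is $\bar u_t = h'\cdot 2M(\ell-(h-r))/\ell^2$. Substituting $h'=-\frac d\delta u_r(t,h)$ and using that at a first touching point $u_r(t,h)$ compares with $\bar u_r(t,h)=-2M/\ell$ fixes the sign of $h'$ there, so $\bar u_t$ has a favorable sign. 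The remaining terms are handled by choosing $M$ large: $-d\bar u_{rr}=2dM/\ell^2$ dominates $|f|\le K$ on $[0,C_1+M]$ and the drift bound $\frac{2(N-1)d}{h_*}\cdot\frac{2M}{\ell}$, provided $\ell$ is small relative to $h_*$. A symmetric argument handles the lower barrier.

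\emph{Conclusion.} Applying the maximum principle on the strip, with $\bar u\ge u$ on the parabolic boundary (at $t=0$ this holds after enlarging $M$ by $\|u_0'\|_\infty\ell$), gives $|u_r(t,h(t))|\le 2M/\ell$. Hence $|h'|\le C_2:=2dM/(\delta\ell)$, and this constant is independent of $T$.
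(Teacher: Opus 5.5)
\textbf{There is a genuine gap in how you handle the $h'$ term.} The two barriers are also not symmetric with respect to that term.

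\emph{Upper barrier.} With $s=h(t)-r$, your upper barrier has $\bar u_t=h'(t)\,2M(\ell-s)/\ell^2$, so the supersolution inequality needs a \emph{lower} bound on $h'$. A first boundary touching, $u_r(t_0,h)=\bar u_r(t_0,h)$, does give $h'(t_0)>0$. But comparison can also fail first at an interior point $(t_0,r_0)$. There the only information you have, $u\le\bar u$ up to $t_0$, yields $u_r(t_0,h)\ge-2M/\ell$, i.e.\ $h'(t_0)\le 2dM/(\delta\ell)$. That is an upper bound, and it leaves $\bar u_t(t_0,r_0)$ arbitrarily negative.

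\emph{Lower barrier.} For $w$ one has $w_t=-h'\,2M(\ell-s)/\ell^2$, which again needs a lower bound on $h'$ — exactly the bound $w$ itself is supposed to produce. At a boundary touching, $h'(t_0)=-2dM/(\delta\ell)<0$, which is the unfavorable sign. So this step can only be closed by a first-time (continuity) argument under the hypothesis $h'\ge-2dM/(\delta\ell)$. Your quadratic profile then fails. Near $s=0$ the time-derivative term is up to $4dM^2/(\delta\ell^2)$, while $dw_{rr}=2dM/\ell^2$, and the ratio $2M/\delta$ does not depend on $\ell$. You would need $M<\delta/2$. That is incompatible with $w\le u$ on $r=h-\ell$ when all you know there is $u\ge C_0$ and $C_0\le\delta/2$. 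This is always the case when $\delta\ge1$, since $C_0\le 1/2$. Taking $M$ large only makes it worse.

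\emph{How the paper closes the loop.} It breaks the circularity asymmetrically. It first proves $h'\ge-dm$ with the exponential lower barrier
$\underline u=c\delta[e^{-(h(t)-r)m/c}-1]+\delta$. Here $c<1$ is close enough to $1$ that $\underline u$ can descend from $\delta$ to $e_0\le C_0$ on a strip of width $k/m$. Its boundary slope returns exactly the threshold $-dm$. Under the hypothesis $h'>-dm$ one gets $\underline u_t-d\underline u_{rr}<d\delta m^2e^{-(h-r)m/c}(1-1/c)$, which dominates $\max f$ for $m$ large. Hopf's lemma at the first time $T_1$ with $h'(T_1)=-dm$ then gives a contradiction. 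Only afterwards is the upper barrier used. With $h'\ge-dm$ available, its time derivative is bounded below by a term linear in the paper's steepness parameter, while the diffusion term is quadratic in it.

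\emph{Secondary issue.} Your choice $\ell\le h_*/2$ makes $C_2$ depend on $T$. Lemma~\ref{le2.4} obtains $\inf h>0$ only from an exponentially decaying lower bound on $E(t)$. You do not need $h_*$ at all. The terms $-d\frac{N-1}{r}w_r\le0$ and $-d\frac{N-1}{r}\bar u_r\ge0$ already have the favorable sign. When the strip reaches $r=0$, one uses the Neumann condition there, as the paper does via the operator $\mathcal B$.
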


		\begin{proof}
		{\bf{Step 1.}}
		We  constuct a lower solution to obtain a lower bound of $h'(t)$.

		Define
		\[
		\underline{u}(t,r)=c\delta\left[e^{-\frac{(h(t)-r)m}{c}}-1\right]+\delta,\quad t\geq 0,\, r\in[h(t)-k/m,h(t)],
		\]
		where the constants $0<c<1$, $k>0$, and $m>0$ will be specified later.

		 Let $e_0=\min\{\min_{r\in[0,h_0]}u_0(r),\frac{\delta}{2},\frac12\}$ and  choose $c\in(0,1)$ sufficiently close to $1$ such that
	\[
	k:=-c\ln\left(\frac{e_0}{c\delta}+1-\frac{1}{c}\right)>0\ \mbox{ with }\ 	\underline{u}(t,h(t)-k/m)=e_0.
	\]	
	Choose 
	\[m > \max\left\{ 
	\frac{u_0'(h_0)}{\delta}, \frac{e^{k/c}\max_{r\in [0,h_0]}|u_0'(r)|}{\delta},
\sqrt{\dfrac{c e^{k/c} \max_{u\in[0,1]} f(u)}{d\delta (1-c)}}  
	\right\}\]
	 and define
\[
T_1=\sup\left\{t\leq T: h'(s)>-\frac{d}{\delta}\underline{u}_r(s,h)=-dm \ \text{for all } s\in [0,t)\right\}.
\]
It is easy to see  that $0<T_1\leq T$ is well defined.	
			We claim that $T_1=T$. Suppose for contradiction that
			$T_1<T$, then 
		\begin{equation}\label{T}
			h'(t)>-dm \ \mbox{ for } t\in[0, T_1) \ \mbox{ and } h'(T_1)=-dm.
		\end{equation}
		
In the following, we will prove that 		
		\begin{equation}\label{2.4}
			\begin{cases}
				\underline{u}_t-d\underline{u}_{rr}-d\frac{N-1}{r}\underline{u}_r-f(\underline{u})\leq0,&t\in(0,T_1],\,r\in [g(t),h(t)],\\
					\mathcal{B}\underline{u}(t,g(t))\leq	\mathcal{B}u(t,g(t)),\, \underline{u}(t,h(t))=\delta,&t\in(0,T_1],\\
				\underline{u}(0,r)\leq u_0(r), &r\in[g(0),h_0],
			\end{cases}
		\end{equation}
		where 
				\[
		\mathcal{B}u:=a(t)u-(1-a(t))u_r,\quad g(t):=\max \{h(t)-k/m,0\}, \quad 
		a(t):=
		\begin{cases}
			1, & g(t)\ge 0,\\[6pt]
			0, & g(t)<0.
		\end{cases}
		\]	
	Then 	we can use the strong comparison principle and Hopf boundary lemma to obtain
	\[
h'(T_1)=-\frac{d}{\delta}u_r(T_1,h(T_1))>-\frac{d}{\delta}\underline{u}_r(T_1,h(T_1))=-dm,
\]
which contradicts \eqref{T}, proving the claim.

If $g(t)\geq0$, then   for $t\in[0, T_1]$,
\[ \mathcal{B} \underline{u}(t, g(t))=\underline{u}(t,h(t)-k/m)=e_0\leq u(t,h(t)-k/m)=\mathcal{B} u(t, g(t)).
		\]
	If $g(t)<0$, then for $t\in[0, T_1]$,
	\[\mathcal{B} \underline{u}(t, g(t))=-\underline{u}_r(t,0)<0=-u_r(t,0)=\mathcal{B} u(t, g(t)).\]
	By the definition of $\underline{u}$, we immediately obtain
		\[\underline{u}(t,h(t))=\delta,\ \mbox{ and }\
		\underline{u}(0,r)<u_0(r),\ \mbox{ for }\ r\in [g(0),h_0),
		\]
		which proves the third condition in \eqref{2.4}.

		By direct calculation, we have
		\begin{equation}\label{2.6}
			\underline{u}_t=-\delta m e^{-\frac{(h-r)m}{c}}h',\qquad \underline{u}_r = \delta m e^{-\frac{(h-r)m}{c}}>0, \qquad \underline{u}_{rr} = \frac{\delta m^2}{c}e^{-\frac{(h-r)m}{c}}.
		\end{equation}
		Then by the assumption of $m$, it is easy to verify
		\[	\underline{u}_t-d\underline{u}_{rr}-d\frac{N-1}{r}\underline{u}_r-f(\underline{u})\leq	\underline{u}_t-d\underline{u}_{rr}-f(\underline{u})\leq 0.\]
Therefore, we obtain the  desired conclusion
\begin{equation}\label{hx}
	h'(t)\geq -dm \ \mbox{ for }\ t\in[0, T).
\end{equation}
		
		\textbf{Step 2.} 	We  constuct a lower solution to obtain an upper bound of $h'(t)$

	 Define		
		\[	\overline{u}(t,r):=(2C_1-\delta)[2M(h(t)-r)-M^2(h(t)-r)^2]+\delta, \ \mbox{ for }\ t>0,\ h(t)-M^{-1}<r<h(t), \]
		where 
		\[M>\max \left\{\sqrt{\frac{LC_1}{d(2C_1-\delta)}}+m, \frac{\max_{r\in [0,h_0]}|u_0'(r)|}{2C_1-\delta}\right\}\]
	with $C_1$ given in Lemma \ref{le2.4} and $L=\max_{u \in [0,2C_1]}f'(u)$.
	In the following, we will prove that 		
	\begin{equation}\label{2.8}
		\begin{cases}
		\overline{u}_t-d\overline{u}_{rr}-d\frac{N-1}{r}\overline{u}_r-f(\overline{u})\geq0,&t\in(0,T),\,r\in [\hat{g}(t),h(t)],\\
			\hat{\mathcal{B}}\overline{u}(t,\hat{g}(t))\geq	\hat{\mathcal{B}}u(t,\hat{g}(t)),\, \overline{u}(t,h(t))=\delta,&t\in(0,T),\\
			\overline{u}(0,r)\geq u_0(r), &r\in[\hat{g}(0),h_0],
		\end{cases}
	\end{equation}
	where 
	\[
	\hat{\mathcal{B}}u:=b(t)u-(1-b(t))u_r,\quad \hat{g}(t):=\max \{h(t)-1/M,0\}, \quad 
	b(t):=
	\begin{cases}
		1, & \hat{g}(t)\ge 0,\\[6pt]
		0, & \hat{g}(t)<0.
	\end{cases}
	\]		
		Then, combining the free boundary condition, we can use the comparison principle and Hopf boundary lemma to obtain
	\[
h'(t)=-\frac{d}{\delta}u_r(t,h(t)) \leq 2M(2C_1-\delta)\frac{d}{\delta}\ \text{for } t\in (0,T).
\]
	implying the desired conclusion.

	By the definition of $\overline{u}$, it is easy to verify
			\[
		\overline{u}(0,r)\geq u_0(r)\ \text{for } r\in [h_0-M^{-1},h_0], \ \mbox{ and }\ \overline{u}(t,h(t))=\delta
		\]
If $\hat{g}(t)\geq0$, then   for $t\in[0, T)$
\[ \hat{\mathcal{B}} \underline{u}(t, \hat{g}(t))=\overline{u}(t,h(t)-1/M)=2C_1\geq u(t,h(t)-1/M)=\hat{\mathcal{B}} u(t, \hat{g}(t)).
\]
If $\hat{g}(t)<0$, then for $t\in[0, T)$,
\[\hat{\mathcal{B}} \overline{u}(t, \hat{g}(t))=-\overline{u}_r(t,0)>0=-u_r(t,0)=\hat{\mathcal{B}} u(t, g(t)).\]
		Moreover, for $t\in(0, T)$ and $r\in[\hat{g}(t), h(t)]$, by the definition of $M$,  we obtain
		\begin{align*}
			\overline{u}_t-(d\overline{u}_{rr}+d\tfrac{N-1}{r}\overline{u}_r)-f(\overline{u})&=(2C_1-\delta)(h'(t)+d\tfrac{N-1}{r})\left[2M-2M^2(h(t)-r)\right]+d(2C_1-\delta)2M^2-f(\overline{u})\\
			&\geq-2Mdm(2C_1-\delta)+d(2C_1-\delta)2M^2-2LC_1\\
			&=2M[dM-dm](2C_1-\delta)-2LC_1\geq 0,
		\end{align*}
	where we have used the fact $h'(t)\geq -dm$ for $t\in [0,T)$.
		 This completes the proof of the lemma.
	\end{proof}

	\begin{proof}[Proof of Theorem \ref{th1.1}.]
			Lemma \ref{le2.6} implies that as long as $h(T)>0$ the local solution obtained in
		Theorem \ref{th2.1} can be extended. Let  $\left[0, \,T_{\mathrm{max}}\right) $ be the maximal interval of existence obtained via this extension.
		Arguing indirectly, assume $T_{\max} < \infty$. Then, by Lemmas \ref{le2.4} and \ref{le2.6}, there exist constants $C_1, C_2$ such that for $t \in [0, T_{\max})$ and $r \in [0, h(t)]$,
		\[
		0 \leq u(t, x) \leq C_1, \quad |h'(t)|  \leq C_2, \quad |h(t)| \leq C_2 T_{\max} + h_0, \quad \inf_{t \in[0, T)}h(t)>0.
		\]
		Define
		\[
	D_T := \{(t, r) \in \mathbb{R}^2 : t \in (0, T], \, r \in [0, h(t)]\}.
		\]
		For any small constant $\varepsilon > 0$, we infer from Theorem \ref{th2.1}, Lemma \ref{le2.4}, and Lemma \ref{le2.6} that $u \in C^{\frac{1+\alpha}{2}, 1+\alpha}(\bar{D}_{T_{\max} - \varepsilon})$. By Schauder's interior estimates, for fixed $0 < T < T_{\max} - \varepsilon$, we have
		\[
		\|u(t,r)\|_{C^{1+\frac{\alpha}{2}, 2+\alpha}(D_{T_{\max} - \varepsilon} \setminus D_T)} \leq M,
		\]
		where $M$ depends on $T$, $T_{\max}$, and $C_i$ ($i = 1, 2$), but is independent of $\varepsilon$. Since $\varepsilon > 0$ can be arbitrarily small, it follows that for any $t \in [T, T_{\max})$,
		\[
		\|u(t, \cdot)\|_{C^{2+\alpha}([0, h(t)])} \leq M.
		\]
		Now we can repeat the proof of Theorem \ref{th2.1} to conclude that there exists $\tau > 0$, depending on $M$ and $C_i$ ($i = 1, 2$), such that \eqref{1.1} with initial time $T_{\max} - \frac{\tau}{2}$ has a unique solution $(u(t,r), h(t))$ defined for larger $t$, at least up to $T_{\max} - \frac{\tau}{2} + \tau$. This contradicts the definition of $T_{\max}$. Thus, we conclude that $T_{\max} = \infty$.

	\end{proof}
	
\section{Long-time behavior}
In this section, by constructing upper and lower solutions and employing the zero-number theory, we establish the existence of the limit of the free boundary and derive a trichotomy result.

\subsection{Longtime dynamics for $0<\delta<1$}

\begin{lemma}\label{nle3.1}
	Suppose that $(\mathbf{f_m})$ holds, $N \geq 2$, $0<\delta<1$, and $(u, h)$ is a solution to \eqref{1.1} for $t \in(0, \infty)$. Then there exists a constant $T_{0} \geq 0$  such that
	\[
	h'(t) \geq 0, \ \mbox{ and }\	u(t, r) \geq \delta,\quad \mbox{ for } t\geq T_{0},\ 0 \leq r \leq h(t).
	\]

\end{lemma}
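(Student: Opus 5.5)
The plan is to compare $u$ with the solution $\underline{w}(t)$ of the ODE $w'=f(w)$, $w(0)=C_0$, where $C_0=\min\{1/2,\min u_0\}>0$ is the constant from Lemma \ref{le2.4}. Since $\delta<1$ and $f>0$ on $(0,1)$, $\underline{w}(t)\uparrow 1$, so there is a finite $T^*$ with $\underline{w}(T^*)>\delta$.

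\textbf{Step 1: $u$ eventually exceeds $\delta$.} For $t\le T_*:=\inf\{t:\underline{w}(t)=\delta\}$ we have $\underline{w}(t)\le\delta=u(t,h(t))$. Moreover $\underline{w}_r\equiv 0$, and the free-boundary inequality $h'\le -\frac d\delta \underline{w}_r=0$ is irrelevant, since we compare on the fixed domain $\{0<r<h(t)\}$ given by the true $h$. With Dirichlet data $u=\delta\ge \underline w$ at $r=h(t)$ and the Neumann condition at $r=0$, the standard parabolic comparison gives $u\ge \underline{w}$ on $[0,T_*]$. Beyond $T_*$ this comparison breaks, because the boundary value $\delta$ is below $\underline w$. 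I would therefore instead use $\underline{u}=\min\{\underline w(t),\delta\}$, or more cleanly run the comparison with $\underline{w}_\epsilon$ solving $w'=f(w)$ and capped at $\delta$. The constant $\delta$ is itself a strict subsolution, since $f(\delta)>0$. Hence the maximum of the two subsolutions $\underline w$ (up to $T_*$) and $\delta$ shows $u(t,r)\ge\delta$ for $t\ge T_*$. The key point is that $\delta$ is a subsolution satisfying the boundary condition with equality; the strong maximum principle then gives $u>\delta$ in the interior for $t>T_*$.

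\textbf{Step 2: monotonicity of $h$.} Once $u(t,\cdot)\ge\delta$ on $[0,h(t)]$ with $u(t,h(t))=\delta$, we get $u_r(t,h(t))\le 0$, hence $h'(t)=-\frac d\delta u_r(t,h(t))\ge0$. Applying the Hopf lemma to $u-\delta$, which satisfies $(u-\delta)_t-d\Delta(u-\delta)=f(u)>0$ when $u\in[\delta,1)$ and is otherwise handled by linearization, gives the strict inequality $h'>0$. We then set $T_0=T_*$.

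\textbf{Main obstacle.} The delicate point is that the comparison in Step 1 takes place on the moving domain with a Dirichlet condition at the moving boundary, and Lemma \ref{le2.3} is phrased with a free boundary $\bar h$. To handle this I would apply the ordinary comparison principle for the fixed curve $r=h(t)$, i.e.\ the version of Lemma \ref{le2.2} with $\bar g\equiv0$ and the upper and lower roles reversed. The other place where care is needed is the initial time: $u_0$ may lie below $\delta$ somewhere. This is exactly why the ODE solution starting from $C_0$ is needed, rather than the constant $\delta$ alone, and it is what produces a positive $T_0$.
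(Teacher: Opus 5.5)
Your proposal is correct and follows the paper's own proof. The paper compares $u$ with the ODE solution of $w'=f(w)$ started from a minimum of $u_0$, on the domain bounded by the true $h(t)$, until $w$ reaches $\delta$ at time $T_0$. It then restarts the comparison with the constant subsolution $\delta$ and reads off $h'\ge 0$ from $u\ge\delta=u(t,h(t))$ and the free boundary condition.

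Two wording points, neither affecting validity. What you build is a concatenation in time of the two subsolutions, not their ``maximum''. The comparison you actually need is the standard one with the Neumann condition at $r=0$, not Lemma~\ref{le2.2} with $\bar g\equiv 0$, which would impose a Dirichlet comparison at $r=0$ that you do not have.
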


\begin{proof}
 We consider the  problem
$	\underline{u}'(t)=f(\underline{u}(t))$ with initial value $\underline{u}(0)=\displaystyle\min_{0 \leq r \leq h(0)} u_0(r)\in(0,\delta]$.
	By $f$, $\underline{u}(t)$ is strictly increasing to $1$. Consequently, there exists a unique constant $T_{0} \geq 0$ such that $\underline{u}(T_{0})=\delta$ and $m_0 \leq \underline{u}(t) \leq\delta$ for all $t \in[0, T_{0}]$.
	
	Define the region $\Omega_1=\{(t,r)\mid 0 \leq t \leq T_{0},\ 0 \leq r \leq h(t)\}$. By the parabolic comparison principle, we have $u(t, r) \geq \underline{u}(t)$ for all $(t, r) \in \Omega_1$. In particular, $u(T_{0}, r) \geq\delta$ for all $0 \leq r \leq h(T_{0})$. 
	  It is easy to verify $\tilde{u} \equiv \delta$ is a strict subsolution of \ref{1.1} for $t\geq T_0$. 
	  Then combining this  with the free boundary condition, we complete the proof.

\end{proof}

\begin{lemma}\label{nle3.2}
	Suppose that $(\mathbf{f_m})$ holds, $N \geq 2$, $0<\delta<1$, and $(u, h)$ is a solution to \eqref{1.1} for $t \in(0, \infty)$. Then
	\[
	\lim_{t \to \infty} h(t)=+\infty \quad \text{and} \quad \lim_{t \to \infty} u(t, r)=1 \quad \text{locally uniformly for } r \geq 0.
	\]
\end{lemma}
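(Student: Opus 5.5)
By Lemma \ref{nle3.1}, $h$ is nondecreasing for $t\ge T_0$, so $h_\infty:=\lim_{t\to\infty}h(t)\in(0,\infty]$ exists. I will prove $h_\infty=\infty$ by contradiction, and then obtain $u\to1$ by comparing with a Dirichlet problem on a large ball.

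\textbf{Step 1: $h_\infty=\infty$.} Suppose $h_\infty<\infty$. The bounds of Lemmas \ref{le2.4} and \ref{le2.6}, together with standard parabolic $L^p$ and Schauder estimates after straightening the boundary ($s=r/h(t)$), give uniform $C^{1+\alpha/2,2+\alpha}$ bounds for $u$ and $C^{1+\alpha/2}$ bounds for $h$ on $[1,\infty)$. Since $h$ is bounded and monotone with Hölder continuous derivative, $h'(t)\to0$, and hence $u_r(t,h(t))\to0$.

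Take any sequence $t_n\to\infty$. Along a subsequence, $u(t_n+\cdot,\cdot)\to U$ in $C^{1,2}_{loc}$, where $U$ is an entire-in-time solution on $\mathbb R\times[0,h_\infty]$ satisfying
\[
U_r(t,0)=0,\qquad U(t,h_\infty)=\delta,\qquad U_r(t,h_\infty)=0,\qquad U\ge\delta.
\]
The strong maximum principle applies to $U-\delta\ge0$, which satisfies a linear equation whose zeroth-order coefficient is bounded. Hence either $U\equiv\delta$ or $U>\delta$ in the interior. In the second case Hopf's lemma at $r=h_\infty$ gives $U_r(t,h_\infty)<0$, a contradiction. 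In the first case, $U\equiv\delta$ is impossible because $f(\delta)>0$ for $\delta\in(0,1)$. Therefore $h_\infty=\infty$.

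A cleaner alternative, which I would use if the compactness step gets technical, is the energy identity from the proof of Lemma \ref{le2.4}:
\[
E'(t)=\int_0^{h}r^{N-1}f(u)\,dr.
\]
Since $u\ge\delta$ and $u\le C_1$, I would first show that $u\le 1+o(1)$ by comparison with the ODE solution started at $C_1$. Then $f(u)\ge c_\delta>0$ whenever $u\in[\delta,1-\epsilon]$, and I would need $u$ to stay below $1-\epsilon$ on a set of positive measure near the boundary. This is where the argument gets delicate, so the Hopf argument above is the main route.

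\textbf{Step 2: $u\to1$ locally uniformly.} The upper bound comes from comparison with the ODE $\bar u'=f(\bar u)$, $\bar u(0)=C_1$. This is a valid upper solution in the sense of Lemma \ref{le2.3}: take $\bar h$ as the free boundary of a comparison problem, or note directly that on the fixed domain $\bar u\ge u$ by the parabolic maximum principle, since $\bar u\ge\delta=u$ on $r=h(t)$ and $u_r(t,0)=0$. This gives $\limsup_{t\to\infty} u\le1$ uniformly.

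For the lower bound, fix $R>0$ and choose $T_R$ with $h(t)>R$ for all $t\ge T_R$. On $[T_R,\infty)\times[0,R]$ we have $u\ge\delta$ at $r=R$. Let $w$ solve the radial problem on $B_R$ with boundary value $\delta$ and initial value $\delta$. Because $\delta$ is a strict subsolution ($f(\delta)>0$), $w$ is increasing in $t$ and converges to the maximal positive steady state $W_R$ of $-d\Delta W=f(W)$ in $B_R$ with $W=\delta$ on $\partial B_R$. By comparison, $u\ge w$ there.

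\textbf{Step 3: the steady states tend to $1$.} It remains to show $W_R\to1$ locally uniformly as $R\to\infty$; this is the main obstacle. I would build a lower barrier from the principal Dirichlet eigenfunction on a ball $B_{R_0}$ with $f'(0)>d\lambda_1(B_{R_0})$. A small multiple $\epsilon\phi$ is then a subsolution lying below $\delta$, so translated copies give $W_R\ge\epsilon\phi(\cdot-x_0)$, which keeps $W_R$ uniformly positive away from $0$. The monotone family $W_R$ then converges, by elliptic estimates, to a positive bounded solution on $\mathbb R^N$ with values in $[\delta,1]$. Since $f>0$ on $(0,1)$, the only such solution is $1$: a Liouville-type argument using radial symmetry and the ODE shows that any radial entire solution with $\inf\ge\delta>0$ and $f>0$ below $1$ must be constant equal to $1$.

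Combining the upper and lower bounds yields $u\to1$ locally uniformly.
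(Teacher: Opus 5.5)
Your proposal is correct and follows essentially the same route as the paper. Step 1 is the same compactness argument ($h'\to0$, a limit problem on the fixed ball, then the strong maximum principle and Hopf's lemma against $U_r(t,h_\infty)=0$). Steps 2--3 are the same comparison with the Dirichlet problem on $B_R$ started from the strict subsolution $\delta$; this is monotone in $t$ and $R$, its limit is an entire steady state with values in $[\delta,1]$ that must be $\equiv 1$, and the ODE supersolution supplies the upper bound.

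The differences are minor. The paper proves the Liouville step by comparing the steady state with the ODE solution $z'=f(z)$, $z(0)=\delta$, rather than by your radial ODE argument, though yours also works (e.g.\ since $r^{N-1}W'$ is nonincreasing). Your eigenfunction barrier is superfluous because $W_R\ge\delta$ already. Finally, the monotone limit from $\delta$ is the minimal, not maximal, steady state above $\delta$, which does not affect the argument.
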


\begin{proof}
	\textbf{Step 1:} We prove that $\lim_{t\to \infty } h(t)=\infty$.

	By Lemma \ref{nle3.1}, the limit  
	$
	h_{\infty}:=\lim_{t \to \infty} h(t)
	$
	exists and satisfies $h_{\infty} \in (0, +\infty]$. We argue by contradiction and suppose that $h_\infty \in (0, \infty)$. Define
	\begin{equation}\label{3.2}
		s=\frac{r}{h(t)},\quad U(t, s)=u(t, r).
	\end{equation}
	Then we have the system
	\begin{equation}\label{3.3}
		\begin{cases}
			U_{t}-\dfrac{d}{h^2(t)} \Delta_{s} U-\dfrac{h'(t)}{h(t)} s U_{s}=f(U), & t>0,\ s \in[0,1], \\
			U_{s}(t, 0)=0,\quad U(t, 1)=\delta, & t>0, \\
			h'(t)=-\dfrac{d}{\delta h(t)} U_{s}(t, 1), & t>0,
		\end{cases}
	\end{equation}
	where $\Delta_{s} U=U_{ss}+\dfrac{N-1}{s}U_{s}$. By Lemma \ref{le2.6}, the standard $L^p$ regularity theory can be applied to obtain that, for any $p>1$,
	\[
	\| U\| _{W_{p}^{1,2}((n,n+2) \times [0,1])}\leq C_{p},\quad \forall\, n\in \mathbb{N},
	\]
	where the constant $C_{p}>0$ is independent of $n$. For $p$ sufficiently large, the Sobolev embedding theorem yields
	\[
	\| U\| _{C^{\frac{1+\alpha}{2}, 1+\alpha}([1, \infty) \times [0,1])} \leq C_{p}'<\infty
	\]
	for some exponent $\alpha \in(0,1)$. Together with the free boundary condition, this implies that $h'(t)$ is uniformly continuous on $[1, \infty)$ and hence
	\[
	\lim_{t \to \infty} h'(t)=0 .
	\]

Let $\{t_n\}_{n=1}^{\infty}$ be any sequence such that $t_n \to \infty$, and define
\[
U_n(t, s):=U(t_n+t, s).
\]
From the uniform $L^p$ bounds and  Sobolev embedding theorem, after passing to a subsequence, we have
\[
U_n \to \widetilde{U} \quad \text{in } C_{\mathrm{loc}}^{\frac{1+\alpha}{2}, 1+\alpha}(\mathbb{R} \times [0,1]),
\]
where $\widetilde{U}$ satisfies
\begin{equation}\label{n3.4}
	\begin{cases}
		\widetilde{U}_{t}-\dfrac{d}{h_{\infty}^2} \Delta_{s} \widetilde{U}=f(\widetilde{U}),\quad \widetilde{U}\geq \delta, & t \in \mathbb{R},\ s \in[0,1], \\
		\widetilde{U}_{s}(t, 0)=0,\quad \widetilde{U}(t, 1)=\delta, & t \in \mathbb{R}, \\
		\widetilde{U}_{s}(t, 1)=0, & t \in \mathbb{R}.
	\end{cases}
\end{equation}

Since $\hat{U}\equiv\delta$ is a strict subsolution of \eqref{n3.4}, by the strong maximum principle and Hopf boundary lemma, we obtain
\[
\widetilde{U}_{s}(t, 1)<0,
\]
which contradicts the third equation in \eqref{n3.4}. Therefore, we conclude that
\[
\lim_{t \to \infty} h(t)=+\infty .
\]

	\textbf{Step 2:} We prove  $\lim_{t \to \infty} u(t, r)=1 $ locally uniformly for $r \geq 0$.

For fixed $L>0$, consider the auxiliary parabolic problem
\begin{equation}\label{3.5}
	\begin{cases}
		\partial_{t} v_{L}-d \Delta_{r} v_{L}=f(v_{L}), & t>0,\ 0 \leq r \leq L, \\
		\partial_{r} v_{L}(t, 0)=0,\quad v_{L}(t, L)=\delta, & t>0, \\
		v_{L}\left(0, r\right)=\delta, & 0 \leq r \leq L .
	\end{cases}
\end{equation}

Since $\underline{v}\equiv\delta$ and $\bar{v}\equiv1$ serve as lower and upper solutions, respectively, $v_L(t,r)$ is nondecreasing in $t$. Hence the limit
$
v_{L}^{\infty}(r):=\lim_{t \to \infty} v_{L}(r)
$
exists and satisfies $\delta \leq v_{L}^{\infty}(r) \leq 1$ for $r \in[0, L]$.
By standard regularity theory, $v_L^\infty$ satisfies
\[
-d\Delta _{r}v_{L}^{\infty }=f(v_{L}^{\infty }),\quad 0\leq r\leq L,\quad \partial _{r}v_{L}^{\infty }(0)=0,\quad v_{L}^{\infty }(L)=\delta .
\]

For any $L_1>L$, the restriction of $v_{L_1}$ to $[0, \infty) \times [0, L]$ is an upper solution for $v_L$, and hence $v_L(t, r) \leq v_{L_1}(t, r)$ on that domain. Consequently, $v_L^\infty(r) \leq v_{L_1}^\infty(r)$ on $[0, L]$, and thus
$
v_{\infty}(r):=\lim_{L \to \infty }v_{L}^{\infty }(r)
$
exists for $r \geq 0$.
By elliptic regularity, $v_\infty$ satisfies
\[
-d\Delta _{r}v_{\infty }=f(v_{\infty }),\quad r\geq 0,\quad \partial _{r}v_{\infty }(0)=0,\quad \delta \leq v_{\infty }\leq 1.
\]

Now consider the problem
$
z'(t)=f(z(t))$ for $t>0$ with initial value $z(0)=\delta$.
It follows that $\delta \leq z(t) \leq 1$ for all $t \geq 0$ and $\lim\limits_{t \to \infty} z(t)=1$. By comparison, $v_{\infty}(r) \geq 1$ for all $r \geq 0$. Since $v_\infty \le 1$, we deduce that $v_{\infty} \equiv 1$.

By Lemma \ref{nle3.1} and Step 1, there exists $T_L>0$ such that
\[
h(t)>L \quad \text{and} \quad u(t,r)\geq \delta \quad \text{for } t\geq T_L,\ r\in[0,h(t)].
\]
By the comparison principle, $v_L(t, r) \leq u(t+T_L, r)$ on $[0, \infty) \times [0, L]$. Hence
\[
\liminf_{t\to \infty} u(t, r) \geq v_{L}^{\infty }(r),\quad \forall\, r \in[0, L].
\]
Letting $L \to \infty$, we obtain
\[
\liminf_{t \to \infty} u(t, r) \geq 1,\quad \text{locally uniformly for } r \geq 0.
\]

Let $w^{*}(t)$ solve the problem
$
\left(w^{*}\right)^{\prime }(t)=f(w^{*}(t))$ for $t>0$ with initial value $w^{*}(0)=\max _{0 \leq r \leq h(0)} u(0, r)$.
Under the assumptions on $f$, we have $\lim_{t \to \infty}w^{*}(t)=1$ and $w^{*}(t)\geq \delta$ for $t\geq0$.
By the comparison principle,
\[
u(t,r)\leq w^{*}(t),\quad \text{for } t\geq 0,\ 0\leq r\leq h(t).
\]
Taking $t \to \infty$ gives
\[
\limsup_{t \to \infty} u(t, r) \leq 1,\quad \text{locally uniformly for } r \geq 0.
\]
Combining the above estimates, we conclude that
\[
\lim_{t \to \infty} u(t, r)=1,\quad \text{locally uniformly for } r \geq 0 .
\]

\end{proof}

\subsection{Longtime dynamics for $\delta=1$}

\begin{lemma}
	Suppose that $(\mathbf{f_m})$ holds, $N \geq 2$, $\delta=1$, and $(u, h)$ is a solution to \eqref{1.1} for $t \in(0, \infty)$. Then
	\[
	\lim_{t \to \infty} h(t)=h_{\infty} \in(0, \infty) \quad \text{and} \quad \lim_{t \to \infty} u(t, r)=1 \quad \text{uniformly for } r \in[0, h(t)].
	\]
\end{lemma}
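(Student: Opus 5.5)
The plan is to prove the uniform convergence of $u$ first, using only spatially homogeneous comparison functions. After that we control $h$ through the mass $E(t)=\int_0^{h(t)} r^{N-1}u\,dr$ from Lemma \ref{le2.4}. The point is that when $\delta=1$ the constant $1$ is an exact solution of \eqref{1.1} with stationary boundary, so no boundary-layer construction should be needed.

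\textbf{Step 1 (uniform convergence of $u$).} Let $w^*(t)$ and $w_*(t)$ solve $w'=f(w)$ with initial values
\[
w^*(0)=\max\{1,\max_{[0,h_0]}u_0\},\qquad w_*(0)=\min\{1,\min_{[0,h_0]}u_0\}>0 .
\]
Then $w_*\le 1\le w^*$ for all $t$. On the lateral boundary we have $u(t,h(t))=1$, and the Neumann condition holds at $r=0$. The parabolic comparison principle on the (given) moving domain $\{0\le r\le h(t)\}$, where the free boundary condition plays no role, gives
\[
w_*(t)\le u(t,r)\le w^*(t)\quad\text{for } 0\le r\le h(t).
\]
Because $f'(1)<0$ and $(1-u)f(u)>0$, both $w_*$ and $w^*$ converge to $1$ exponentially. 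Hence $\sup_{[0,h(t)]}|u(t,\cdot)-1|\le Ce^{-\beta t}$ for some $\beta>0$, which already proves the second assertion.

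\textbf{Step 2 (convergence of $h$ via the mass).} The computation in the proof of Lemma \ref{le2.4}, with $\delta=1$, gives $E'(t)=\int_0^{h}r^{N-1}f(u)\,dr$. From Step 1 and $f(1)=0$ we get $|f(u)|\le L|u-1|\le CLe^{-\beta t}$, so $|E'(t)|\le C e^{-\beta t}h(t)^N$. Step 1 also gives $|E(t)-h(t)^N/N|\le Ce^{-\beta t}h(t)^N/N$, so $h^N\le 2NE$ for $t\ge T$ large. Therefore $|E'|\le C'e^{-\beta t}E$ for $t\ge T$, and Gronwall yields
\[
E(T)e^{-C'/\beta}\le E(t)\le E(T)e^{C'/\beta}.
\]
Since $E'$ is absolutely integrable, $E(t)\to E_\infty$. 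Moreover $E_\infty\in(0,\infty)$, because $E(T)>0$ by Lemma \ref{le2.4}. Finally,
\[
\frac{h^N}{N}=E-\int_0^h r^{N-1}(u-1)\,dr,
\]
and the last integral is $O(e^{-\beta t}h^N)\to0$. Hence $h(t)^N\to NE_\infty$, that is, $h(t)\to h_\infty=(NE_\infty)^{1/N}\in(0,\infty)$.

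\textbf{Main obstacle.} The step needing care is the exponential rate in Step 1 and its use in Gronwall. Mere convergence $u\to1$ would not prevent $E$ (and hence $h$) from drifting to infinity. The exponential rate comes from linearizing the ODE at $1$ using $f'(1)<0$ and $f\in C^1$. A further point to check is that the comparison principle is legitimately applied on the domain $\{r\le h(t)\}$ with Dirichlet data $1$ at $r=h(t)$. This is routine, since $h$ is already known to be $C^1$ by Theorem \ref{th1.1}.
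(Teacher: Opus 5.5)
Your proof is correct, and after Step 1 it takes a different route from the paper.

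The paper uses the same ODE sandwich to get $u\to1$ uniformly, but it tracks no rate. It then treats $h$ in three separate pieces:
\begin{enumerate}
\item \emph{Existence of the limit.} It shows $\lim_{t\to\infty}h(t)$ exists using the zero-number lemma for $\eta=u-1$. Whenever $h'(t)=0$, the point $r=h(t)$ is a degenerate zero of $\eta(t,\cdot)$. This can happen only finitely often, so $h$ is eventually monotone.
\item \emph{Finiteness.} It compares with a one-dimensional one-phase Stefan problem for the heat equation, using $V_r\le 0$ and $f(V)\le 0$ for $V\ge 1$. The boundedness of that problem's free boundary is quoted from the literature.
\item \emph{Positivity.} It uses the mass identity $E'=\int_0^{h}r^{N-1}f(u)\,dr$, with a case split on the sign of $u_0-1$ and a contradiction argument in the sign-changing case.
\end{enumerate}

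You instead exploit the exponential rate supplied by $f'(1)<0$. With it, $|(\log E)'|\le C'e^{-\beta t}$ is integrable on $[T,\infty)$. This single estimate gives existence, finiteness and positivity of $h_\infty$ at once. It needs neither zero-number theory nor the external Stefan-problem result, and it avoids any case analysis on $u_0$. It even yields the rate $|h(t)-h_\infty|=O(e^{-\beta t})$, because $|h^N/N-E|$ and $|E-E_\infty|$ both decay like $e^{-\beta t}$.

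What the paper's route gives in addition is the qualitative fact that $h$ is eventually monotone. Its finiteness step also does not rely on any convergence rate for $u$.
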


\begin{proof}
\textbf{Step 1.} We prove that $u(t,r)\to1$ uniformly in $[0, h(t)]$ as $t\to\infty$.

Consider the two initial-value problems 
\[
w_{i}'(t)=f(w_{i}(t)), \qquad i=1,2,
\]
with initial value
\[
w_1(0)=\max_{0 \leq r \leq h(0)}u_0(r), \qquad 
w_2(0)=\min_{0 \leq r \leq h(0)} u_0(r).
\]
By the assumptions on $f$, the corresponding solutions satisfy $w_1(t)\ge 1$, $w_2(t)\le 1$, and
\[
\lim_{t \to \infty} w_{1}(t)=\lim_{t \to \infty} w_{2}(t)=1 .
\]
Applying the comparison principle yields, for all $t>0$ and $0 \leq r \leq h(t)$,
\[
w_{2}(t)\leq u(t,r)\leq w_{1}(t).
\]
Letting $t \to \infty$, we obtain the uniform convergence
\[
\lim_{t \to \infty}\sup_{0 \leq r \leq h(t)}|u(t, r)-1|=0 .
\]
	
	\textbf{Step 2.} We prove that $\lim_{t \to \infty}h(t)$ exists.
	
 Define
	\[
	\eta(t, r)=u(t, r)-1,
	\]
then $\eta(t, r)$ satisfies
	\begin{equation}\label{3.9}
		\begin{cases}
			\eta_{t}-d [\eta_{rr}+\frac{N-1}{r} \eta_r]=c(t, r) \eta, & t>0,\ 0 \leq r \leq h(t), \\
			\eta_{r}(t, 0)=0, & t>0, \\
			\eta(t, h(t))=0, & t>0 ,
		\end{cases}
	\end{equation}
where $c$ are bounded.  Denote by $\mathcal{Z}(t)$ the number of zeros of $\eta(t, \cdot)$ on $[0, h(t)]$. By Lemma \ref{0}, either $\eta\equiv0$ or $\mathcal{Z}(t)<\infty$ for $t>0$.
	
Suppose for contradiction that  there exists a sequence $t_n \to \infty$ such that $h'(t_n)=0$. From the free boundary condition
	\[
	h'\left(t_{n}\right)=-d u_{r}\left(t_{n}, h\left(t_{n}\right)\right)=-d \eta_{r}\left(t_{n}, h\left(t_{n}\right)\right)=0,
	\]
	we know that $ h(t_n)$ is a degenerate zero of $\eta(t_n,\cdot)$.  By Lemma \ref{0}, this is a contradiction. Therefore, $\lim\limits_{t \to \infty} h(t)$ exists.
	
	\textbf{Step 3.} We prove that $\liminf_{t \to \infty}h(t)<\infty$.

	Consider the following problem
\begin{equation}
	\left\{\begin{array}{ll}
		V_{t}-d V_{r r}=0, & t>0, \ 0<r<H(t) ,\\
		V_r(t, 0)=0, \ V(t, H(t))=\delta, & t>0 ,\\
		H^{\prime}(t)=-d V_{r}(t, H(t)), & t>0 ,\\
		H(0)=h_{0}+1, V(0, r)=v_{0}(r), & 0 \leq r \leq h_{0}+1,
	\end{array}\right.
\end{equation}	
	where $v_0(x)\geq 1\in C^2([0, h_0+1])$ satisfying
	\[v'(r)\leq 0,\,   v(r) \geq 1, \mbox{ for } r\in [0, h_0+1], \ v(r) \geq   u_0(r) \mbox{ for } r\in[0,h_0]\]
	By \cite{DL}, it is well known that $\lim_{t \to \infty} H(t)<\infty$.
Denote $W(t,r)	:=V_r(t,r)$, then it is easy to verify
	\begin{equation}
		\left\{\begin{array}{ll}
			W_{t}-d W_{r r}=0, & t>0, \ 0<r<H(t) ,\\
			W(t, 0)=0, \ W(t, H(t))\leq 0, & t>0 ,\\
			W(0, r)\leq0, & 0 \leq r \leq h_{0}+1,
		\end{array}\right.
	\end{equation}	
The maximum principle implies that $V_r(t,r)=W(t,r)\leq 0$ for $t\geq 0$ and $r\in[0, H(t)]	$.
By $f(V)\leq 0$ for $V	\geq 1$,
\begin{equation}
	\left\{\begin{array}{ll}
		V_{t}-d V_{r r}-d\frac{N-1}{r}V_r-f(V)\geq0, & t>0, \ 0<r<H(t) ,\\
		V_r(t, 0)=0, \ V(t, H(t))=\delta, & t>0 ,\\
		H^{\prime}(t)=-d V_{r}(t, H(t)), & t>0 ,\\
		H(0)>h_{0}, V(0, r)\geq u_0(r), & 0 \leq r \leq h_{0},
	\end{array}\right.
\end{equation}		
Now we can use the comparison principle Lemma \ref{le2.2} to obtain
\[\lim_{t \to \infty }h(t)\leq \lim_{t \to \infty } H(t)<\infty.\]

		\textbf{Step 4.} We prove that $h_\infty>0$.
	
		As in Lemma \ref{le2.4}, for $t > 0$, the function $E(t) := \int_{0}^{h(t)} r^{N-1}u(t,r) \, dr$ satisfies 
	\[
	E'(t) = \int_{0}^{h(t)}r^{N-1} f(u) \, dr.
	\]
	
	If $u_0(r) \geq 1$, Lemma \ref{le2.2} implies $ h_0 < h_\infty$.
	For $0 < u_0(r) \leq 1$, the stranded comparison principle yields $0 < u(t,r) \leq 1$. Consequently,
	\[
	E'(t) = \int_{0}^{h(t)}r^{N-1} f(u) \, dr \geq 0 \quad \text{for } t \geq 0,
	\]
	which implies $E(t) \geq E(0) > 0$ for $t \geq 0$. Then $h_\infty>0$.
	The case where $u_0(r) - 1$ changes sign in $[0, h_0]$ requires additional analysis. Let  $v(t)$ be the unique solution to
	$
	v' = f(v)$ with initial value $v(0) = m_0:= \max_{r \in [0, h_0]} u_0(r) > 1$.
	Since $f(1) = 0$ and $f(u) < 0$ for $u > 1$, $v(t)$ decreases to $1$ as $t \to \infty$. The standard comparison principle gives $u(t,r) \leq v(t)$ for $t > 0$ and $r\in [0, h(t)]$. From $f'(1) < 0$, there exists $\epsilon > 0$ such that $f(u)$ is decreasing on $[1-\epsilon, 1+\epsilon]$. As $u, v \to 1$ when $t \to \infty$, we can find $T > 0$ such that $u(t,r), v(t) \in [1-\epsilon, 1+\epsilon]$ for $t \geq T$. Hence, for $t \geq T$,
	\[
	E'(t) = \int_{0}^{h(t)} r^{N-1}f(u(t,r)) \, dr \geq \int_{0}^{h(t)} r^{N-1}f(v(t)) \, dr = h^N(t) v'(t)/N,
	\]
	and for any $t > s \geq T$,
	\[
	\int_{0}^{h(t)} r^{N-1}u(t,r) \, dr - \int_{0}^{h(s)}r^{N-1} u(s,r) \, dr = E(t) - E(s) \geq \int_s^t h^N(\tau) v'(\tau)/N \, d\tau.
	\]
	Assuming $h_\infty = 0$, we derive a contradiction.  Choose a sequence $t_n \to \infty$ with $h(t_n)\geq h(t)$ for $t \geq t_n$. For large $n$ and $t > t_n$, using $v' < 0$,
	\[
	\int_{t_n}^t h^N(\tau) v'(\tau)/N \, d\tau \geq [h^N(t_n)/N] \int_{t_n}^t v'(\tau) \, d\tau \geq [h^N(t_n)/N](1 - v(t_n)).
	\]
	Thus for large $n$ and $t > t_n$,
	\begin{equation}\label{n4.11}
		\begin{aligned}
			\int_{0}^{h(t)} r^{N-1}u(t,r) \, dr &\geq \int_{0}^{h(t_n)} r^{N-1} u(t_n,r) \, dr + [h^N(t_n)/N](1 - v(t_n)) \\
			&= \int_{0}^{h(t_n)} r^{N-1}[u(t_n,r) + (1 - v(t_n))] \, dr.
		\end{aligned}
	\end{equation}
	Since both $u(t,r), v(t) \to 1$ uniformly in $r \in [0, h(t)]$ as $t \to \infty$, fix sufficiently large $n$ such that
	\[
	u(t_n,r) + (1 - v(t_n)) > 0 \quad \text{for } r \in [0, h(t_n)],
	\]
	which implies
	\[
	I_n := \int_{0}^{h(t_n)}r^{N-1} [u(t_n,r) + (1 - v(t_n))] \, dr > 0.
	\]
	However, $h_\infty = 0$ and $u(t,r) \to 1$ imply $\int_{0}^{h(t)} r^{N-1}u(t,r) \, dr \to 0$ as $t \to \infty$. Letting $t \to \infty$ in \eqref{n4.11} yields $0 \geq I_n > 0$, a contradiction. Therefore $h_\infty >0$. This completes the proof.

\end{proof}

\subsection{Longtime dynamics for $\delta>1$}

\begin{lemma}
	Suppose that $(\mathbf{f_m})$ holds, $N \geq 2$, $\delta>1$, and $(u, h)$ is a solution to \eqref{1.1} for $t \in(0, \infty)$. Then
	\[
	\lim_{t \to \infty} h(t)=0 \quad \text{and} \quad \lim_{t\to \infty } u(t, r)=\delta \quad \text{uniformly for } r\in [0, h(t)].
	\]
\end{lemma}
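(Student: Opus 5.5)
We first show that $u$ eventually stays above $1$ near the boundary and that $h$ is monotone decreasing after some time. We then use the mass functional $E(t)=\int_0^{h(t)} r^{N-1}u(t,r)\,dr$ to force $h\to0$. Finally, the convergence $u\to\delta$ comes from a squeezing argument.

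\textbf{Step 1 (sandwich bounds).} Let $w_1,w_2$ solve $w_i'=f(w_i)$ with
\[
w_1(0)=\max\{\max_{[0,h_0]}u_0,\delta\}, \qquad w_2(0)=\min_{[0,h_0]}u_0 .
\]
The constant $\delta$ satisfies $f(\delta)<0$, so it is a strict supersolution of the interior equation. As in the case $\delta=1$, the comparison principle gives
\[
w_2(t)\le u(t,r)\le \max\{w_1(t),\delta\}.
\]
Hence $\limsup_{t\to\infty}\sup_r u\le\delta$, and $\liminf u\ge \min\{1,\cdot\}$ type bounds. More precisely, $w_2(t)\to1$, so for any $\varepsilon>0$ we have $u\ge 1-\varepsilon$ for large $t$.

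\textbf{Step 2 (eventual monotonicity of $h$).} Once $w_1(t)\le\delta$ (if $\max u_0>\delta$, then $w_1$ decreases through $\delta$ in finite time $T_0$, because $f<0$ on $(1,\infty)$), the constant $\delta$ is a supersolution for $t\ge T_0$. Thus $u\le\delta$ on $[0,h(t)]$, so $u_r(t,h(t))\ge0$ and $h'(t)\le0$. Consequently $h_\infty=\lim h(t)\in[0,h_0']$ exists.

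\textbf{Step 3 ($h_\infty=0$).} Suppose instead $h_\infty>0$. Rescale by $s=r/h(t)$ as in \eqref{3.2}--\eqref{3.3}. The $L^p$ and Schauder estimates of Lemma~\ref{nle3.2} give $h'(t)\to0$, and along $t_n\to\infty$ we get a limit $\widetilde U$ solving \eqref{n3.4}, now with $\widetilde U\le\delta$, $\widetilde U(t,1)=\delta$ and $\widetilde U_s(t,1)=0$. Since $f(\widetilde U)<0$ wherever $\widetilde U>1$, and $\widetilde U\ge 1$ by Step 1 ($u\ge1-\varepsilon$ for every $\varepsilon$), the constant $\delta$ is a strict supersolution. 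The strong maximum principle then gives $\widetilde U<\delta$ in the interior, unless $\widetilde U\equiv\delta$, which is impossible since $f(\delta)\neq0$. The Hopf lemma gives $\widetilde U_s(t,1)>0$, a contradiction. Hence $h_\infty=0$.

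\textbf{Step 4 (convergence of $u$).} We need $\sup_{[0,h(t)]}|u-\delta|\to0$. The upper bound $u\le\delta$ is already in hand. For the lower bound, I would use that the interval shrinks while $h'$ stays bounded (Lemma~\ref{le2.6}). In rescaled variables the diffusion coefficient $d/h^2(t)\to\infty$ dominates the bounded reaction $f$ and the drift term $\frac{h'}{h}sU_s$. Concretely, compare $U$ from below with
\[
\delta-K\,\theta(t)(1-s^2),
\]
where $\theta(t)$ solves $\theta'=-\bigl(c\,d/h^2-C\bigr)\theta+C'$. This gives $\delta-U=O(h^2(t))$, or $o(1)$. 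The drift term is controlled because $|h'/h|\,s|U_s|\lesssim |h'|\,|u_r|$ with $|u_r|$ bounded via the gradient estimates.

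\textbf{Main obstacle.} I expect the hardest part to be this last comparison. The drift coefficient $h'/h$ may blow up as $h\to0$. I would try to handle it through the sign $h'\le0$: with $\psi=1-s^2$ we have $\psi_s\le0$, so the drift term contributes $-\frac{h'}{h}sK\theta\psi_s\cdot(-1)$, whose sign I would check is favorable. Alternatively, one could avoid rescaling entirely and argue that the oscillation $\max u-\min u\le C h(t)\sup|u_r|\to0$ via uniform gradient bounds. Combined with $u(t,h(t))=\delta$, this would yield the uniform convergence $u\to\delta$.
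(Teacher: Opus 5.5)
Your proposal is correct. Steps 1--3 follow the paper; Step 4 takes a different route to the uniform convergence of $u$.

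\textbf{Steps 1--3.} These match the paper's argument. An ODE comparison gives $u\le\delta$ once $\bar u'=f(\bar u)$, $\bar u(0)=\max u_0$, has decreased to $\delta$. Hence $h'\le 0$ and $h_\infty$ exists. The case $h_\infty>0$ is then excluded by rescaling, compactness, and the Hopf lemma at $s=1$ against the strict upper solution $\delta$. Your extra bound $u\ge w_2(t)\to1$ gives $\widetilde U\ge1$. This is a small convenience: $\delta-\widetilde U$ becomes a nonnegative supersolution of the pure heat operator, so you need not linearize $f$.

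\textbf{Step 4: the paper's route.} The paper constructs nothing new here. It reuses the two explicit barriers from the proof of Lemma~\ref{le2.6}, which hold for all $t\ge0$. Once $h(t)<\min\{k/m,1/M\}$, they are defined on all of $[0,h(t)]$, with the left end becoming the Neumann end $r=0$. Since $1-e^{-x}\le x$, they give
\[
\delta-\delta m\,(h(t)-r)\le u(t,r)\le \delta+2M(2C_1-\delta)\,(h(t)-r),
\]
so $|u-\delta|\le Ch(t)\to0$. This is exactly your ``alternative'' oscillation argument. The uniform Lipschitz-to-the-boundary bound you left unspecified is already supplied by Lemma~\ref{le2.6}, so that is the cheapest way to finish.

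\textbf{Step 4: your rescaled barrier.} The barrier $\delta-K\theta(t)(1-s^2)$ also works and uses the blow-up of $d/h^2$ directly. However, the sign you deferred is the wrong way. Since $\underline U_s=2K\theta s\ge0$ and $h'\le0$, the drift contribution $-\frac{h'}{h}s\underline U_s=2K\theta s^2|h'|/h$ is nonnegative, i.e.\ unfavorable for a subsolution. It is harmless only because $|h'|\le C_2$ makes it $O(\theta/h)$. The diffusion contribution $-2NdK\theta/h^2$ absorbs it once $h$ is small, provided the constant $c$ in your $\theta$-ODE is below $2N$. A few further details are needed:
\begin{itemize}
\item $\theta$ must stay above a fixed multiple of $h^2$ to absorb $-f(\underline U)$. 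This holds if $\theta(S_0)$ starts above the equilibrium $C'/(cd/h^2-C)$, which is nonincreasing because $h$ is.
\item $f$ should be extended below $0$ in case $\delta-K\theta(S_0)<0$ initially.
\end{itemize}
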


\begin{proof}

Consider the  problem
$	\bar{u}'(t)=f(\bar{u}(t))$ with initial value $\bar{u}(0)=\max_{0 \leq r \leq h(0)} u_0(r)\in[\delta,\infty)$.
By $\mathbf{(f_m)}$, $\bar{u}(t)$ is strictly decreasing to $1$. Consequently, there exists a unique constant $T_{0} \geq 0$ such that $\bar{u}(T_{0})=\delta$ and $\bar{u}(t) \geq\delta$ for all $t \in[0, T_{0}]$.
 By the  comparison principle, we have $u(t, r) \leq \bar{u}(t)$ for all $t\in[0, T_0]$. In particular, $u(T_{0}, r) \leq\delta$ for all $0 \leq r \leq h(T_{0})$. 
It is easy to verify $\tilde{u} \equiv \delta$ is a strict upper solution of \ref{1.1} for $t\geq T_0$. 
Then combining this conclusion with the free boundary condition, we have
\[h'(t)\leq 0 \ \mbox{ for }\ t\geq T_0, \mbox{ and }
	h_{\infty}:=\lim_{t \to \infty }h(t) \leq h(T_{0 }).
	\]

	We now prove by contradiction that $h_{\infty}=0$. Suppose on the contrary that $0<h_{\infty}<+\infty$. Define
	\[
	s=\frac{r}{h(t)},\quad U_n(t, s)=u(t+t_n,r),\quad s \in[0,1].
	\]
Similar to the proof of  Lemma \ref{nle3.2},	by the $L^p$ estimates and compact embedding, there exists $\widetilde{U}$ satisfying
	\begin{equation}\label{3.11}
		\begin{cases}
			\widetilde{U}_{t}-\dfrac{d}{h_{\infty}^{2}} \Delta_{s} \widetilde{U}=f(\widetilde{U}), \quad 	\widetilde{U}\leq \delta, & t \in \mathbb{R},\ s \in[0,1], \\
			\widetilde{U}_{s}(t, 0)=0,\quad \widetilde{U}(t, 1)=\delta, & t \in \mathbb{R}, \\
			\widetilde{U}_{s}(t, 1)=0, & t \in \mathbb{R} .
		\end{cases}
	\end{equation}
	Since $W\equiv \delta$ is a strict upper solution.
Using the strong comparison principle and Hopf boundary lemma, we can obtain
$	\widetilde{U}_{s}(t, 1)>0$ making a contradiction. Therefore $h_\infty=0$.

By Lemma \ref{le2.6}, 
there exists $S_0>0$ such that $g(t)\leq 0$ and $\hat{g}(t)\leq 0$ for $t\geq S_0$.
Moreover, for $t\geq S_0$ and $r\in[0, h(t)]$, we have
\[c\delta\left[e^{-\frac{(h(t)-r)m}{c}}-1\right]+\delta\leq u(t,r)\leq (2C_1-\delta)[2M(h(t)-r)-M^2(h(t)-r)^2]+\delta\]
Letting $t\to\infty$,
	we can conclude that
	\[
	\lim_{t \to \infty}\sup_{0 \leq r \leq h(t)}|u(t, r)-\delta|=0 .
	\]
\end{proof}

	\section{Precise propagation profile}
In this section, we prove Theorem \ref{th1.4} and obtain the logarithmic shifts by constructing precise upper and lower solutions. The method is inspired by \cite{DN,DHZ,ATH}.
	\subsection{Formula for $c_N(\delta)$}
	
Fix $\delta\in(0,1)$ and $\mu>0$, and consider the problem
\begin{equation}\label{3.1}
	d q'' - c q' + f(q) = 0,\quad
	q(0)=\delta,\quad q'(0)=\frac{c}{\mu},\quad q(\infty)=1.
\end{equation}
By standard theory (see \cite{DL,DN}), there exists a unique $c_0>0$ such that for every $c\in[0,c_0]$, the system
\begin{equation}\label{3.2}
	P_c'(q) = \frac{c}{d} - \frac{f(q)}{d\,P_c(q)} \quad \text{in } [0,1),\qquad
	P_c(1)=0,\quad P_c'(1)<0,
\end{equation}
admits a unique solution $P_c(q)$ satisfying
\[
P_c'(1) = \frac{c - \sqrt{c^2-4d f'(1)}}{2d},\qquad
P_c(q)>0 \quad \text{for } q\in(0,1).
\]
The following lemma gives some useful properties of $P_c$.

\begin{lemma}\label{le3.1}
	For any $0\le c_1<c_2\le c_0$ and $\bar c\in[0,c_0]$,
	\[
	P_{c_1}(q) > P_{c_2}(q)\quad\text{in }[0,1),\qquad 
	\lim_{c\to\bar c} P_c(q) = P_{\bar c}(q)\quad\text{uniformly in }[0,1].
	\]
	Moreover, $P_{c_0}(0)=0$ and $P_{c_0}(q)>0$ for $q\in(0,1)$.
	The function $c\mapsto P_c(q)$ is $C^2$ on $(0,c_0)$.
\end{lemma}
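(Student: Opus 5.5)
We work with the phase-plane formulation \eqref{3.2}: for $c\in[0,c_0]$, $P_c$ is the unique trajectory entering the saddle $(1,0)$ along its stable direction, i.e. the branch with $P_c'(1)=\frac{c-\sqrt{c^2-4df'(1)}}{2d}<0$. The plan is to prove the three assertions (strict monotonicity in $c$, continuous dependence, the properties of $P_{c_0}$) in that order, and then the $C^2$ regularity.

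\textbf{Monotonicity.} Take $0\le c_1<c_2\le c_0$ and set $w=P_{c_1}-P_{c_2}$. Near $q=1$ both functions vanish, and $P_{c_1}'(1)-P_{c_2}'(1)<0$ because $c\mapsto c-\sqrt{c^2-4df'(1)}$ is strictly increasing (recall $f'(1)<0$). Hence $w>0$ on some interval $(1-\epsilon,1)$.
Suppose $w$ vanishes somewhere in $[0,1)$, and let $\bar q$ be the largest zero. Since $P_{c_1}(\bar q)=P_{c_2}(\bar q)>0$, equation \eqref{3.2} gives
\[
w'(\bar q)=\frac{c_1-c_2}{d}<0 .
\]
On the other hand, $w(\bar q)=0$ and $w>0$ on $(\bar q,1)$ force $w'(\bar q)\ge 0$, a contradiction. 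Therefore $P_{c_1}>P_{c_2}$ on $[0,1)$.

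\textbf{Continuity.} Near the saddle we use the stable manifold theorem with parameter $c$: the local stable branch depends continuously (indeed smoothly) on $c$. Away from the saddle, on any compact subset of $[0,1)$ the right-hand side $\frac{c}{d}-\frac{f(q)}{dP}$ is smooth in $(q,P,c)$ as long as $P$ stays bounded away from $0$. Since $P_c>0$ on $(0,1)$, continuous dependence on parameters for the ODE applies there.
The only delicate point is $q$ near $0$ when $\bar c=c_0$, where $P_{c_0}(0)=0$. Monotonicity handles this: $P_c$ is squeezed between $P_{c_0}$ and $P_{c'}$ for nearby $c'$, and all of these are small near $q=0$. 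A Dini-type argument (monotone family, continuous pointwise limit on a compact set) then yields uniform convergence on $[0,1]$.

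\textbf{Properties of $P_{c_0}$.} The statement $P_{c_0}(0)=0$ is essentially the characterization of $c_0$ as the minimal speed of \eqref{1.4}: the trajectory $Q'=P(Q)$ connects $0$ and $1$ exactly when $P(0)=0$. Concretely, $c\mapsto P_c(0)$ is continuous and strictly decreasing. It is positive for small $c$; for $c=0$ this follows from $dP_0^2/2=\int_q^1 f$. One then checks that $P_c(0)>0$ for $c<c_0$ (no traveling wave exists) and $P_{c_0}(0)=0$, which is the standard definition of $c_0$ via \cite{DL,DN}. Positivity of $P_{c_0}$ on $(0,1)$ comes from the construction: if $P_{c_0}$ hit zero at some interior $q$ with $f(q)>0$, then $P'\to-\infty$ there, which is incompatible with the trajectory continuing toward $q=0$.

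\textbf{$C^2$ dependence.} This is where I expect the main obstacle. Smoothness in $c$ on compact subsets of $(0,1)$ follows from differentiable dependence, provided the trajectory is started from the stable manifold, which is $C^2$ in $c$ if $f$ is regular enough. However, $f$ is only $C^1$, so the stable manifold is a priori only $C^1$ in $c$. To get around this, I would first try to derive an explicit representation of $\partial_cP_c$. Differentiating \eqref{3.2} gives the linear equation
\[
(\partial_cP)'=\frac1d+\frac{f(q)}{dP^2}\,\partial_cP .
\]
Solving it backward from $q=1$ with an integrating factor yields an explicit formula for $\partial_cP_c$, and differentiating that formula once more gives $\partial_c^2P_c$. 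This integrating-factor formula is likely the source of the exponential integral appearing in $c_N(\delta)$.

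The singularity at $q=1$, where $f(q)/P^2$ blows up like $1/(1-q)$, must be controlled using the exponent $f'(1)/(dP_c'(1)^2)$, and this requires care. At $q=0$ with $c<c_0$ there is no problem, since $P_c(0)>0$.
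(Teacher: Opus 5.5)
Your outline matches the paper's. The paper imports strict monotonicity, continuity and the properties of $P_{c_0}$ from Lemma~6.1 of \cite{DL}. It obtains the $C^2$ dependence exactly as you plan: from the integrating-factor formula \eqref{4.3} for $\frac{d}{dc}P_c$ and its $c$-derivative \eqref{3.4}, with the estimates near $q=1$ deferred to \cite{DHZ}. Your last-crossing argument for monotonicity is a valid substitute for the citation. (If the crossing is at $\bar q=0$ with $c_2=c_0$, you need $P_{c_1}(0)>0$ for $c_1<c_0$, which you only establish afterwards.)

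The step that fails as written is continuity at $q=0$ when $\bar c=c_0$. Saying that $P_c$ is squeezed between $P_{c_0}$ and $P_{c'}$ and that ``all of these are small near $q=0$'' presupposes $P_{c'}(0)\to0$ as $c'\uparrow c_0$, which is precisely the claim. Monotonicity alone only gives existence of $a:=\lim_{c\uparrow c_0}P_c(0)\ge0$ and does not rule out a jump $a>0$. Dini's theorem cannot be invoked until the pointwise limit at $q=0$ is known to be $P_{c_0}(0)=0$.

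The missing ingredient is short. Suppose $a>0$. The right-hand side $\frac{c}{d}-\frac{f(q)}{dP}$ is regular near $(q,P,c)=(0,a,c_0)$, so continuous dependence gives $P_c\to\hat P$ uniformly on some $[0,\eta]$. Here $\hat P$ solves the $P$-equation with $c=c_0$ and $\hat P(0)=a$. Since also $P_c\to P_{c_0}$ on $(0,\eta]$, this forces $P_{c_0}(0^+)=a>0$, contradicting continuity of $P_{c_0}$ at $0$.

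For the $C^2$ part, what singles out $\frac{d}{dc}P_c$ among solutions of your linear equation is boundedness at $q=1$. Homogeneous solutions grow like $(1-q)^{-\kappa}$ with $\kappa=-f'(1)/(dP_c'(1)^2)>0$; your exponent has the opposite sign. You can also avoid the $C^1$-in-$c$ stable manifold altogether. The difference quotient $w=(P_{c+h}-P_c)/h$ is bounded and satisfies exactly $w'=\frac1d+\frac{f}{dP_cP_{c+h}}w$, so
\[
w(q)=-\frac1d\int_q^1\exp\Bigl(-\frac1d\int_q^\xi\frac{f(s)}{P_c(s)P_{c+h}(s)}\,ds\Bigr)d\xi<0 .
\]
This gives strict monotonicity on $(0,1)$ at once. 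Since the integrand lies in $(0,1]$, it also yields \eqref{4.3} as $h\to0$ by dominated convergence, using only continuity in $c$.
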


\begin{proof}

The first part of the conclusion is provided by Lemma~6.1 of \cite{DL}. Fix $c\in(0,c_0)$. Following the proof in \cite{DHZ}, we obtain
\begin{equation}\label{4.3}
	\frac{d}{dc} P_c(q)
	= -\frac{1}{d}\int_q^1
	\exp\!\left(-\frac{1}{d}\int_q^\xi \frac{f(s)}{P_c(s)^2}\,ds\right)d\xi < 0,
	\qquad q\in[0,1).
\end{equation}
Since $P_c(q)$ is continuous in $c$ and $\frac{d}{dc}P_c(1)=0$, it follows that the map $c\mapsto \frac{d}{dc}P_c(q)$ is continuous on $(0,c_0)$, uniformly with respect to $q\in[0,1]$.

Furthermore, we obtain
\begin{equation}\label{3.4}
	\frac{d^2}{dc^2}P_c(q)
	= -\frac{2}{d^2}\int_q^1 \left[
	\exp\!\left(-\frac{1}{d}\int_q^\xi \frac{f(s)}{P_c(s)^2}\,ds\right)
	\int_q^\xi \frac{f(s)}{P_c(s)^3}\,\frac{d}{dc}P_c(s)\,ds
	\right] d\xi.
\end{equation}
Using estimates analogous to those for the case $d=1$ and $q=0$ in \cite{DHZ}, one can show that the integral in \eqref{3.4} converges and depends continuously on $c$. This completes the proof.

\end{proof}

Define
\begin{equation}
	\eta(c,\mu) := P_c(\delta) - \frac{c}{\mu}, \qquad \mu_0 := \frac{d}{\delta}.
\end{equation}
From \eqref{3.2}, we obtain
\[
\eta(c_0,\mu_0) = P_{c_0}(\delta) - \frac{c_0}{\mu_0}
< \frac{c_0\delta}{d} - \frac{c_0}{\mu_0} = 0,
\qquad
\eta(0,\mu) = P_0(\delta) > 0.
\]
By the continuity of $\eta$ with respect to $\mu$, there exists $\mu^*>\mu_0$ such that
\[
\eta(c_0,\mu) < 0 \quad \text{for all } \mu\in(0,\mu^*].
\]

\begin{proposition}
	Assume that condition $(\mathbf{f_m})$ holds and $\delta\in(0,1)$. Then, for each $\mu\in(0,\mu^*]$, there exists a unique pair
	$(c,q)=\bigl(c_*(\mu), q_{c_*(\mu)}(\mu;\cdot)\bigr)$ with $c_*(\mu)>0$ satisfying
	\begin{equation}\label{3.6}
		\begin{cases}
			d q'' - c q' + f(q) = 0, & q'>0 \quad \text{in } (0,\infty), \\
			q(0)=\delta, \quad q(\infty)=1, \quad q'(0)=\dfrac{c}{\mu},
		\end{cases}
	\end{equation}
 Moreover, $c_*(\mu)\in(0,c_0)$ and $c_*(\mu)$ is a $C^2$ function. Furthermore, for  $\mu\in(0,\mu^*]$,
	\[
	c_*'(\mu)>0>\left(\frac{c_*(\mu)}{\mu}\right)'.
	\]
\end{proposition}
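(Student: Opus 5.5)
The plan is to reduce \eqref{3.6} to the scalar equation $\eta(c,\mu)=0$, i.e. $P_c(\delta)=c/\mu$, via the phase-plane substitution $q'=P_c(q)$.

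\textbf{Step 1: from $\eta=0$ to a solution of \eqref{3.6}.} Suppose $c\in(0,c_0)$ satisfies $P_c(\delta)=c/\mu$. Solve $q'=P_c(q)$ with $q(0)=\delta$. Since $P_c>0$ on $[\delta,1)$ and $P_c(1)=0$ with $P_c'(1)<0$, the solution $q$ increases strictly and tends to $1$ as $z\to\infty$. Differentiating gives $q''=P_c'(q)q'=P_c'(q)P_c(q)$, so by \eqref{3.2}
\[
dq''=cP_c(q)-f(q)=cq'-f(q).
\]
Thus $q$ solves \eqref{3.6}.

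\textbf{Step 2: from a solution of \eqref{3.6} to $\eta=0$.} Conversely, let $(c,q)$ solve \eqref{3.6}. Because $q'>0$, we may write $q'=P(q)$ on $[\delta,1)$. Then $P$ satisfies the ODE in \eqref{3.2} with $P(1)=0$, since $q'(\infty)=0$. The linearization at $q=1$ (a saddle, because $f'(1)<0$) forces $P'(1)<0$. Extending $P$ backward to $[0,\delta]$ and using the uniqueness in \eqref{3.2}, we get $P=P_c$. The requirement $c\le c_0$ should come out of the standard theory; alternatively, take $c\in(0,c_0]$ as part of the class in which uniqueness is claimed. Hence existence and uniqueness of the pair are equivalent to existence and uniqueness of a root $c\in(0,c_0)$ of $c\mapsto\eta(c,\mu)$.

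\textbf{Step 3: existence and uniqueness of the root.} For fixed $\mu\in(0,\mu^*]$ we have $\eta(0,\mu)=P_0(\delta)>0$ and $\eta(c_0,\mu)<0$ by the choice of $\mu^*$. Lemma \ref{le3.1} gives continuity of $\eta$ in $c$, and $\partial_c\eta=\frac{d}{dc}P_c(\delta)-\frac1\mu<0$ by \eqref{4.3}. So $\eta(\cdot,\mu)$ is strictly decreasing and has a unique zero $c_*(\mu)\in(0,c_0)$. Moreover $c_*(\mu_0)=c_*^\delta$, since at $\mu=\mu_0$ the problem \eqref{3.6} coincides with \eqref{banbo}. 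Because $c\mapsto P_c(\delta)$ is $C^2$ on $(0,c_0)$ (Lemma \ref{le3.1}) and $\eta$ is smooth in $\mu$, the implicit function theorem applies at $(c_*(\mu),\mu)$ with $\partial_c\eta\neq0$, so $c_*$ is $C^2$.

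\textbf{Step 4: monotonicity.} Implicit differentiation gives
\[
c_*'(\mu)=-\frac{\partial_\mu\eta}{\partial_c\eta}=-\frac{c_*/\mu^2}{\partial_c\eta}>0,
\]
since $\partial_\mu\eta=c/\mu^2>0$ and $\partial_c\eta<0$. For the second inequality, note the identity $c_*(\mu)/\mu=P_{c_*(\mu)}(\delta)$. Differentiating it,
\[
\Big(\frac{c_*(\mu)}{\mu}\Big)'=\frac{d}{dc}P_c(\delta)\Big|_{c=c_*(\mu)}\,c_*'(\mu)<0,
\]
using \eqref{4.3} together with $c_*'>0$.

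\textbf{Main obstacle.} The delicate point is Step 2. One must justify that every solution of \eqref{3.6} corresponds to the specific branch $P_c$ with $c\le c_0$, including the extension of $P$ to $[0,\delta]$ with $P>0$ there. This is where I would rely on the cited standard theory from \cite{DL,DN}. The regularity in Step 3 likewise leans on Lemma \ref{le3.1}.
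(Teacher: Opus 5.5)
Your proposal is correct and follows essentially the same route as the paper: reduce to the root of $\eta(c,\mu)=P_c(\delta)-c/\mu$, locate it in $(0,c_0)$ using $\eta(0,\mu)>0>\eta(c_0,\mu)$ and strict monotonicity in $c$, obtain $C^2$ regularity from the implicit function theorem, and read off the signs by implicit differentiation. The differences are minor: you add the converse direction (every solution of the problem comes from $P_c$), which the paper leaves implicit, and you get $(c_*/\mu)'<0$ by differentiating $c_*/\mu=P_{c_*}(\delta)$ instead of using the paper's bound $0<c_*'(\mu)<c_*(\mu)/\mu$.
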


\begin{proof}
	Fix $\mu\in(0,\mu^*]$, then 
	\[\eta(0,\mu) = P_0(\delta) > 0>\eta(c_0, \mu).\]
	Since $c\mapsto\eta(c,\mu)$ is continuous and strictly decreasing,
	there exists a unique
	$
	c_*(\mu) \in (0,c_0)
	$
	such that \[\eta(c_*(\mu),\mu)=0.\]
	Let \( q = q_*(r) \) be the unique solution of
	\[
	q' = P_{c_*}(q), \quad q(0) = \delta.
	\]
	Then it is easily checked that \( (c, q) = (c_*, q_*) \) satisfies \eqref{3.6}.

	From Lemma~\ref{le3.1} and the definition of $\eta$, it follows that $\eta$ is of class $C^2$ in $(c,\mu)$. Moreover,
	\[
	\partial_c\eta(c,\mu) = \frac{d}{dc}P_c(\delta) - \frac{1}{\mu} < -\frac{1}{\mu} < 0.
	\]
	Therefore, the implicit function theorem guarantees that $c_*(\mu)\in C^2$. In addition,
	\[
	c_*'(\mu) = -\frac{\partial_\mu\eta}{\partial_c\eta}
	= \frac{c_*(\mu)/\mu^2}{\frac{1}{\mu} - \frac{d}{dc}P_{c_*(\mu)}(\delta)}
	\in \left(0,\frac{c_*(\mu)}{\mu}\right),
	\]
	and
	\[
	\left(\frac{c_*(\mu)}{\mu}\right)'
	= \frac{c_*'(\mu)\mu - c_*(\mu)}{\mu^2}<0.
	\]
	
\end{proof}

\begin{lemma}
	For every $\xi\in[\mu_0P_{c_*(\mu^*)}(\delta),\,\mu_0P_0(\delta))$, there exists a unique value $\mu=\mu(\xi)$ such that
	\[
	\frac{c_*(\mu(\xi))}{\mu(\xi)} = \frac{\xi}{\mu_0},
	\qquad \mu(c_*(\mu_0)) = \mu_0 = \frac{d}{\delta}.
	\]
	Then the function $g(\xi):= c_*(\mu(\xi))$ is of class $C^2$ and satisfies
	\begin{equation}
		g(c_*(\mu_0)) = c_*(\mu_0),\qquad
		g'(c_*(\mu_0)) =
		- \frac{d\,c_*(\mu_0)}
		{\displaystyle \mu_0^2 \int_0^\infty \bigl(q_{c_*(\mu_0)}'(z)\bigr)^2 e^{-\frac{c_*(\mu_0)}{d}z}\,dz}.
	\end{equation}
\end{lemma}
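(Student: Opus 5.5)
We prove the statement in two parts: first existence, uniqueness and regularity of $\mu(\xi)$, which come from the monotonicity of $c_*(\mu)/\mu$; then the formula for $g'$, obtained by differentiating the implicit relation and rewriting $\frac{d}{dc}P_c(\delta)$ from \eqref{4.3} in the travelling coordinate $z$.

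\emph{Existence, uniqueness and regularity of $\mu(\xi)$.} By the preceding Proposition, $\mu\mapsto c_*(\mu)/\mu$ is $C^2$ on $(0,\mu^*]$ with strictly negative derivative, hence a $C^2$ diffeomorphism onto its image. To identify the image, use the defining relation $\eta(c_*(\mu),\mu)=0$, i.e. $c_*(\mu)/\mu=P_{c_*(\mu)}(\delta)$.
\begin{itemize}
\item At $\mu=\mu^*$ the value is $P_{c_*(\mu^*)}(\delta)$.
\item As $\mu\to0^+$, Lemma \ref{le3.1} gives $P_c\le P_0$, so $c_*(\mu)=\mu P_{c_*(\mu)}(\delta)\le \mu P_0(\delta)\to0$. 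The uniform continuity of $c\mapsto P_c$ from Lemma \ref{le3.1} then gives $c_*(\mu)/\mu\to P_0(\delta)$.
\end{itemize}
So the image is $[P_{c_*(\mu^*)}(\delta),P_0(\delta))$. Multiplying by $\mu_0$, every $\xi$ in the stated interval has a unique preimage $\mu(\xi)$. The inverse function theorem gives $\mu\in C^2$, and hence $g=c_*\circ\mu\in C^2$. Since $c_*(\mu_0)/\mu_0=\xi/\mu_0$ exactly when $\xi=c_*(\mu_0)$, we get $\mu(c_*(\mu_0))=\mu_0$ and $g(c_*(\mu_0))=c_*(\mu_0)$.

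\emph{Derivative identity.} Write $c=c_*(\mu)$ and $P'=\frac{d}{dc}P_c(\delta)|_{c=c_*(\mu)}$. The chain rule gives
\[
g'(\xi)=\frac{c_*'(\mu)}{\mu_0\,(c_*/\mu)'(\mu)}.
\]
From the Proposition, $c_*'=\dfrac{c/\mu^2}{1/\mu-P'}$. A short computation gives $c_*'\mu-c=\dfrac{cP'}{1/\mu-P'}$, so
\[
(c_*/\mu)'=\frac{cP'}{\mu^2(1/\mu-P')}.
\]
Substituting, everything cancels except $g'(\xi)=1/(\mu_0P')$; in particular $g'(c_*(\mu_0))=1/(\mu_0P')$ with $c=c_*(\mu_0)$. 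It remains to express $P'$ through $q_*:=q_{c_*(\mu_0)}$.

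\emph{Rewriting $P'$ in the $z$ variable.} Start from \eqref{4.3} at $q=\delta$. The ODE in \eqref{3.2} gives
\[
\frac{f(s)}{dP_c(s)^2}=\frac{c/d-P_c'(s)}{P_c(s)},
\]
so
\[
\frac1d\int_\delta^{\xi}\frac{f}{P_c^2}\,ds=\frac cd\int_\delta^\xi\frac{ds}{P_c(s)}-\ln\frac{P_c(\xi)}{P_c(\delta)}.
\]
Now change variables via $q=q_*(z)$, for which $dq=q_*'\,dz$ and $q_*'(z)=P_c(q_*(z))$. Then $\int_\delta^\xi ds/P_c=z(\xi)$, and
\[
P'=-\frac{1}{d\,q_*'(0)}\int_0^\infty (q_*'(z))^2e^{-cz/d}\,dz .
\]
Finally use $q_*'(0)=c/\mu_0=c\delta/d$ and $\delta=d/\mu_0$ to obtain
\[
\frac{1}{\mu_0P'}=-\frac{d\,c}{\mu_0^2\int_0^\infty (q_*')^2e^{-cz/d}\,dz},
\]
which is the claimed formula for $g'(c_*(\mu_0))$.

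\emph{Main obstacle.} The delicate points are justifying this change of variables up to $q=1$ (that is, $z=\infty$) and the convergence of the integral. Both follow from the exponential approach $q_*(z)\to1$, guaranteed by $P_c'(1)<0$.
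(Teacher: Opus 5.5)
Your proposal is correct and follows essentially the same route as the paper. The monotonicity of $c_*(\mu)/\mu$ together with the endpoint limits gives $\mu(\xi)$, the chain rule reduces $g'(c_*(\mu_0))$ to $1/\bigl(\mu_0\,\frac{d}{dc}P_{c_*(\mu_0)}(\delta)\bigr)$, and the substitution $s=q_*(z)$ in \eqref{4.3} produces the integral formula. The one difference is that you actually justify the limit $c_*(\mu)/\mu\to P_0(\delta)$ as $\mu\to0^+$ (via $c_*(\mu)\le \mu P_0(\delta)$ and continuity of $c\mapsto P_c$), which the paper only asserts.
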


\begin{proof}
	Since
	\[
	\lim_{\mu\to0}\frac{c_*(\mu)}{\mu}=P_0(\delta)
	\quad\text{and}\quad
	\lim_{\mu\to\mu^*}\frac{c_*(\mu)}{\mu}
	=\frac{c_*(\mu^*)}{\mu^*}=P_{c_*(\mu^*)}(\delta),
	\]
	and in view of the facts that $\left(\frac{c_*(\mu)}{\mu}\right)'<0$ and $c_*(\mu)\in C^2$,
	it follows that for every $\xi\in\left[\mu_0P_{c_*(\mu^*)}(\delta),\,\mu_0P_0(\delta)\right)$,
	there exists a unique $C^2$ function $\mu=\mu(\xi)\in(0,\mu^*]$  satisfying
	\[
	\frac{c_*(\mu(\xi))}{\mu(\xi)}=\frac{\xi}{\mu_0},
	\qquad \mu(c_*(\mu_0))=\mu_0=\frac{d}{\delta}.
	\]
	Consequently, $g\in C^2$ with $g'(\xi)=c_*'(\mu(\xi))\mu'(\xi)<0$ and $g(c_*(\mu_0))=c_*(\mu_0)$.

	A direct calculation yields
	\begin{equation}\label{3.8}
		g'(c_*(\mu_0)) = c_*'(\mu_0)\mu'(c_*(\mu_0)) = \frac{1}{\mu_0\,\frac{d}{dc}P_{c_*(\mu_0)}(\delta)}.
	\end{equation}
	Recalling \eqref{4.3}, we have
	\[
	\frac{d}{dc}P_c(\delta) = -\frac{1}{d}\int_\delta^1
	\exp\!\left(-\frac{1}{d}\int_\delta^\xi \frac{f(s)}{P_c(s)^2}\,ds\right)d\xi .
	\]
	Using the relation \(P_c(s)=q'_c(z)\) with \(s=q_c(z)\), together with \eqref{3.6}, we obtain
	\begin{align*}
		\int_\delta^\xi \frac{f(s)}{P_{c_*(\mu_0)}(s)^2}\,ds
		&= \int_0^{q_{c_*(\mu_0)}^{-1}(\xi)} \frac{f(q_{c_*(\mu_0)}(z))}{(q_{c_*(\mu_0)}'(z))^2}\,q_{c_*(\mu_0)}'(z)\,dz \\
		&= \int_0^{q_{c_*(\mu_0)}^{-1}(\xi)} \frac{c_*(\mu_0) q_{c_*(\mu_0)}'(z) - d q_{c_*(\mu_0)}''(z)}{q_{c_*(\mu_0)}'(z)}\,dz \\
		&= c_*(\mu_0) q_{c_*(\mu_0)}^{-1}(\xi) - d\log\frac{q_{c_*(\mu_0)}'(q_{c_*(\mu_0)}^{-1}(\xi))}{q_{c_*(\mu_0)}'(0)} .
	\end{align*}
	It then follows that
	\begin{align*}
		\frac{d}{dc}P_{c_*(\mu_0)}(\delta)
		&= -\frac{1}{d}\int_\delta^1 \frac{q_{c_*(\mu_0)}'(q_{c_*(\mu_0)}^{-1}(s))}{q_{c_*(\mu_0)}'(0)} e^{-\frac{c_*(\mu_0)}{d}q_{c_*(\mu_0)}^{-1}(s)}\,ds \\
		&= -\frac{1}{d}\int_0^\infty 
		\frac{q_{c_*(\mu_0)}'(z)}{q_{c_*(\mu_0)}'(0)}\, e^{-\frac{c_*(\mu_0)}{d}z}\,
		q_{c_*(\mu_0)}'(z)\,dz \\
		&= -\frac{1}{d\,q_{c_*(\mu_0)}'(0)} \int_0^\infty \bigl(q_{c_*(\mu_0)}'(z)\bigr)^2 e^{-\frac{c_*(\mu_0)}{d}z}\,dz \\
		&= -\frac{\mu_0}{d\,c_*(\mu_0)} \int_0^\infty \bigl(q_{c_*(\mu_0)}'(z)\bigr)^2 e^{-\frac{c_*(\mu_0)}{d}z}\,dz .
	\end{align*}
	Substituting this into \eqref{3.8} yields the desired conclusion.

\end{proof}

Define
\begin{equation}\label{cn}
	c_N(\delta) = d(N-1) \bigl[c_*(\mu_0)\bigl(1 - g'(c_*(\mu_0))\bigr)\bigr]^{-1}.
\end{equation}
Since \(g\) is of class \(C^2\), we obtain the expansion
\begin{equation}\label{cng}
	g(c_*(\mu_0) - c_N(\delta) t^{-1}) - g(c_*(\mu_0))
	= -g'(c_*(\mu_0))(c_N(\delta) t^{-1}) + O(c_N^2(\delta) t^{-2}).
\end{equation}
This identity will be crucial for the estimates in the next subsection.  For convenience, in the following, we denote $c_N:=c_N(\delta)$.

\subsection{Rough bounds }
	
\begin{lemma}\label{le4.4}
	For any \(c\in(0,c_*(\mu_0))\) and any \(\sigma\in(0,-f'(1))\), there exist positive constants \(T_*\), \(M_*\), and \(\sigma_*\in(0,-f'(1))\) such that for all \(t\ge T_*\),
	\[
	\begin{cases}
		h(t) \ge c t, \\
		u(t,r) \le 1 + M_* e^{-\sigma t}, & r\in[0,h(t)], \\
		u(t,r) \ge 1 - M_* e^{-\sigma_* t}, & r\in[0,ct].
	\end{cases}
	\]
\end{lemma}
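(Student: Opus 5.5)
The three assertions of Lemma~\ref{le4.4} will be proved in the order: upper bound on $u$, linear lower bound on $h$, lower bound on $u$.

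\textbf{Upper bound.} Let $w^*(t)$ solve $(w^*)'=f(w^*)$ with $w^*(0)=\max\{\max_{[0,h_0]}u_0,1\}$, exactly as in the proof of Lemma~\ref{nle3.2}. The comparison principle gives $u(t,r)\le w^*(t)$ on $[0,h(t)]$. Since $f'(1)<0$, linearization at $1$ gives $w^*(t)-1\le Ce^{-\sigma t}$ for any $\sigma\in(0,-f'(1))$. This yields $M_*$ for the second inequality.

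\textbf{Linear growth of $h$.} Fix $c<c_*(\mu_0)=c_*^\delta$ and use a compactly supported lower solution built from the one-dimensional semi-wave with a slightly smaller speed. By Proposition~\ref{th1.3} and the monotonicity $c_*'(\mu)>0$, choose $\mu<\mu_0$ with $c<c_*(\mu)<c_*(\mu_0)$. Set $q=q_{c_*(\mu)}$, $\tilde c=c_*(\mu)$, and for $t\ge T$ define
\[
\underline h(t)=c' t+L,\qquad \underline u(t,r)=(1-\epsilon)\,q\bigl(\underline h(t)-r\bigr)+\epsilon\delta ,
\]
with $c<c'<\tilde c$. The level constraint $\underline u(t,\underline h)=\delta$ then holds. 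The plan is to choose a modified profile (slightly lower slope) so that the free-boundary inequality $\underline h'\le-\frac d\delta\underline u_r$ becomes $c'\le(1-\epsilon)\frac d\delta\frac{\tilde c}{\mu}$. This holds because $\tilde c/\mu>c_*(\mu_0)/\mu_0$, by $(c_*(\mu)/\mu)'<0$, and $\epsilon$ is small.

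The curvature term $-d\frac{N-1}{r}\underline u_r$ has the bad sign. It is controlled on $r\ge R_0$ by the slack $(\tilde c-c')q'$, since $\frac{d(N-1)}{r}\le \tilde c-c'$ once $r\ge R_0$. On $r\le R_0$ one uses $u\approx1$ there (Lemma~\ref{nle3.2}) together with a left boundary condition at $r=R_0$, through the two-boundary comparison Lemma~\ref{le2.2}. The reaction term is handled using $q'>0$ and $f'(1)<0$ near $q\approx1$; there one may perturb by $\epsilon$ in the standard way of \cite{DN}.

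To start the comparison, pick $T$ large so that $h(T)>\underline h(T)$ and $u(T,\cdot)\ge \underline u(T,\cdot)$ on $[R_0,\underline h(T)]$. This is possible by Lemma~\ref{nle3.2}, since $u\to1$ locally uniformly and $h\to\infty$, and $\underline u<1-\epsilon'$. Comparison then gives $h(t)\ge c't+L\ge ct$ for large $t$. I expect this construction to be the main obstacle: making the profile satisfy both the boundary slope condition and the differential inequality despite the $\frac{N-1}{r}$ term near the inner boundary.

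\textbf{Lower bound on $u$.} On $[0,ct]$, with $c<c'$, the distance to the free boundary grows linearly, so $\underline u\ge 1-\epsilon-Ce^{-\kappa(c'-c)t}$ there. To upgrade to $1-M_*e^{-\sigma_*t}$ I will compare $u$ on the expanding ball $\{r\le c''t\}$, with $c<c''<c'$, against the lower solution
\[
1-Me^{-\sigma_* t}\,\phi(r/(c''t)).
\]
Here $\phi$ is a fixed bump-like weight such that the boundary values at $r=c''t$ are dominated by the previous step. The radial term is harmless because $\phi'\le0$ after choosing $\phi(s)=\cosh$-type profiles, or by using $f(u)\ge \sigma(1-u)$ near $1$ with $\sigma_*$ smaller. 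Alternatively, iterate: run the previous argument with $\epsilon=\epsilon_k\to0$ along a geometric sequence of times to obtain exponential decay. I would try the explicit barrier first, taking $\sigma_*<\min\{\sigma,-f'(1)\}$ small enough to absorb the drift terms of order $t^{-1}$.
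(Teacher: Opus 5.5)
The gap is in the third assertion, the upgrade to exponential convergence. Your rough lower solution only gives $1-u\le\epsilon(1-\delta)+o(1)$ on $[0,c''t]$, with $\epsilon>0$ \emph{fixed}.

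A barrier $1-Me^{-\sigma_*t}\phi(r/(c''t))$ with a fixed bounded weight $\phi$ has deficit $Me^{-\sigma_*t}\phi(1)\to0$ on the lateral boundary $r=c''t$. So it cannot stay below $u$ there for large $t$ unless you already know exponential convergence at $r=c''t$, which is the claim itself. The monotonicity $\phi'\le0$ is also backwards: the deficit must be largest on the lateral boundary, where your information is weakest.

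The fallback iteration with $\epsilon_k\to0$ is circular in the same way. Each restart needs $u(t,R_0)\ge1-\epsilon_k(1-\delta)$ for all $t\ge T_k$, i.e.\ a convergence rate, and nothing ties $T_k$ to $\log(1/\epsilon_k)$.

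What is missing is a weight that grows exponentially in $r$, converting distance from the lateral boundary into decay in time. For example, take $w=1-Me^{-\sigma_*t}\cosh(\lambda r)$. Using $\sinh x\le x\cosh x$ and $f(u)\ge\sigma(1-u)$ on $[1-\rho,1]$, it satisfies
\[
w_t-d\Bigl(w_{rr}+\tfrac{N-1}{r}w_r\Bigr)-f(w)\le Me^{-\sigma_*t}\cosh(\lambda r)\bigl(\sigma_*+dN\lambda^2-\sigma\bigr)\le0
\]
whenever $\sigma_*+dN\lambda^2\le\sigma$. The argument then runs as follows:
\begin{itemize}
\item Compare on $\{r\le R(t)\}$, where $w(t,R(t))=1-\epsilon_1$ with $\epsilon(1-\delta)<\epsilon_1\le\rho$, so that $R(t)\approx\sigma_*t/\lambda$.
\item With $c<\sigma_*/\lambda<c''$, the lateral data are dominated by your rough bound.
\item Take $M=\epsilon_1e^{\sigma_*T}/2$, so that $R(T)$ is a fixed radius. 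The initial ordering at time $T$ then follows from $u\to1$ locally uniformly.
\item On $[0,ct]$ you obtain $1-u\le Me^{-(\sigma_*-\lambda c)t}$.
\end{itemize}

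The paper gets its exponential estimate differently. It uses the cube $D$ and the linear problem for $\psi$ borrowed from \cite{DHZ}, which gives the estimate only on $[0,\hat ct]$ with $\hat c<c_*/\sqrt N$. It then extends to $[0,ct]$ by a second lower solution $(1-M_2e^{-\hat\sigma t})q_*(\underline H(t)-r+x(t))$ on $[\hat ct,\underline H(t)]$. That second step also produces $h(t)\ge c_*t-M_*\log t$ and \eqref{04.13}, which are needed later in Lemma \ref{le4.6}.

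Your first two parts are fine. The upper bound is the paper's ODE comparison. Your rough lower solution $(1-\epsilon)q(\underline h(t)-r)+\epsilon\delta$ absorbs the curvature into the slack $\tilde c-c'$ for $r\ge R_0$ and uses a fixed inner boundary at $R_0$. It is a valid substitute for the paper's plateau construction with $q^{c_2}$ and the $\frac{d(N-1)}{c_1}\log t$ correction, provided you start it with a time shift, $\underline h(t)=c'(t-T)+L_0$ with $L_0>R_0$ fixed. As you wrote it, requiring $h(T)>c'T+L$ at a large $T$ presupposes the linear growth being proved.
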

	
	\begin{proof}

	\textbf{Step 1.} 
	Let \(v\) be the solution to the ODE \(v'(t)=f(v)\) with initial data \(v(0)=\|u_0\|_{L^\infty}+1\). 
	Then \(v\) is an upper solution to \eqref{1.1}, and hence \(u(t,r)\le v(t)\) for all \(t\ge0\). 
	By assumption \((\mathbf{f_m})\), there exists a constant \(M_1>0\) such that
	\[
	u(t,r)\le v(t)\le 1+M_1 e^{-\sigma t}
	\quad\text{for }0\le r\le h(t),\ t\ge0.
	\]
	
	By the results in \cite{DN}, for any \(c\in(0,c_0)\), there exists a function \(q=q^c\) satisfying
	\[
	\begin{cases}
		d q'' - c q' + f(q)=0,\quad q'>0,\quad z\in[0,z^c),\\
		q(0)=\delta,\quad q'(0)=\frac{\delta}{d}\,c_*,\quad q'(z^c)=0,
	\end{cases}
	\]
	where \(z^c>0\), \(Q^c:=q(z^c)\in(\delta,1)\), and \(c_*:=c_*(\mu_0)\). Moreover,
	\[
	\lim_{c\nearrow c_*} z^c=+\infty,\qquad
	\lim_{c\nearrow c_*}\|q^c-q_*\|_{L^\infty([0,z^c])}=0.
	\]
	
	We now choose \(c_1,c_2\in(c,c_*)\) with \(c_1<c_2\), and define
	\[
	\underline{h}(t):=z^{c_2}+c_2 t-\frac{d(N-1)}{c_1}\log t.
	\]
	Choose \(T_1>0\) such that
	\[
	c_1 t\le c_2 t-\frac{d(N-1)}{c_1}\log t \qquad\text{for }t\ge T_1.
	\]
	Define
	\[
	\underline{u}(t,r):=
	\begin{cases}
		q^{c_2}\bigl(\underline{h}(t)-r\bigr),
		& c_2 t-\frac{d(N-1)}{c_1}\log t \le r \le \underline{h}(t),\\[1em]
		q^{c_2}(z^{c_2}),
		& 0\le r\le c_2 t-\frac{d(N-1)}{c_1}\log t.
	\end{cases}
	\]
	Since spreading occurs, we can find \(T_2>T_1\) such that
	\[
	\underline{h}(T_1)\le h(T_2),\qquad
	\underline{u}(T_1,r)\le u(T_2,r)\quad\text{for }r\in[0,\underline{h}(T_1)].
	\]
	
	A direct verification shows that 
	\(\bigl(\underline{u}(t-T_2+T_1,r),\underline{h}(t-T_2+T_1)\bigr)\) 
	is a lower solution to \eqref{1.1} for \(t\ge T_2\). 
	Hence there exists \(T_3\ge T_2\) such that for all \(t\ge T_3\),
	\[
	h(t)\ge \underline{h}(t-T_2+T_1)
	= z^{c_2}+c_2(t-T_2+T_1)-\frac{d(N-1)}{c_1}\log(t-T_2+T_1)\ge ct,
	\]
	and
	\[
	u(t,r)\ge \underline{u}(t-T_2+T_1,r)
	\quad\text{for }r\in[0,\underline{h}(t-T_1+T_2)].
	\]

	\textbf{Step 2.} We prove that for any \(\hat{c}\in(0,c_*/\sqrt{N})\), there exist \(T_5>0\), \(\hat{\sigma}\in(0,-f'(1))\), and \(M_2>0\) such that for all \(t\ge T_5\),
	\[
	u(t,r)\ge 1-M_2 e^{-\hat{\sigma}t}\quad\text{for }r\in[0,\hat{c}t].
	\]
	
	Since \(\underline{u}(t-T_2+T_1,r)\equiv q^{c_2}(z^{c_2})=Q^{c_2}>Q^c\) for \(r\le ct\) and all \(t\ge T_3\), it follows from the previous estimates for \(u\) and \(h\) that
	\[
	h(t)\ge ct,\qquad u(t,r)\ge Q^c\quad\text{for }0\le r\le ct,\ t\ge T_3.
	\]
	
	Because \(f'(1)<0\), for any \(\sigma\in(0,-f'(1))\) there exists \(\rho=\rho(\sigma)\in(0,1)\) such that
	\begin{equation}\label{4.11}
		\sigma\le -f'(u)\quad\text{for }1-\rho\le u\le 1+\rho,
	\end{equation}
	which implies
	\begin{equation}\label{3.11}
		f(u)\ge \sigma(1-u)\ \text{for }u\in[1-\rho,1],\qquad
		f(u)\le \sigma(1-u)\ \text{for }u\in[1,1+\rho].
	\end{equation}
	Since \(Q^c\to1\) as \(c\nearrow c_*\), we may assume \(Q^c>1-\rho\).
		Now consider a solution \(\psi\) of the problem
	\[
	\begin{cases}
		\psi_t - d\Delta\psi = -\sigma(\psi-1), & t>0,\ x\in D,\\
		\psi \equiv Q^c, & t>0,\ x\in\partial D,\\
		\psi \equiv Q^c, & t=0,\ x\in D,
	\end{cases}
	\]
	where
	\[
	D=\left\{x\in\mathbb{R}^N:\ -\frac{c}{\sqrt{N}}T\le x_i\le \frac{c}{\sqrt{N}}T,\ i=1,\dots,N\right\}.
	\]
	By the proof of Lemma 3.2 in \cite{DHZ}, for sufficiently small \(\epsilon>0\) such that \(\epsilon^2 c^2\sigma<2\), there exist constants \(M_2>M_1\) and \(T_4>T_3\) such that
	\[
	u\!\left(\frac{\epsilon^2 c^2}{4N}T+T,x\right)
	\ge \psi\!\left(\frac{\epsilon^2 c^2}{4N}T,x\right)
	\ge 1-M_2 e^{-\frac{\epsilon^2 c^2\sigma T}{4N}}
	\quad\text{for }|x_i|\le (1-\epsilon)cT,\ i=1,\dots,N,\ T\ge T_4.
	\]
	
	Setting
	\[
	t=\frac{\varepsilon^2 c^2}{4N}T+T,\qquad
	T=\left(1+\frac{\varepsilon^2 c^2}{4N}\right)^{-1}t,
	\]
	we obtain, for \(t\ge T_5:=\frac{\varepsilon^2 c^2}{4N}T_4+T_4\),
	\[
	u(t,|x|)\ge 1-M_2 e^{-\hat{\sigma}t}
	\quad\text{for }|x_i|\le (1-\varepsilon)\left(1+\frac{\varepsilon^2 c^2}{4N}\right)^{-1}\frac{c}{\sqrt{N}}t,\ i=1,\dots,N,
	\]
	where
	\[
	\hat{\sigma}:=\frac{\varepsilon^2 c^2}{4N}\left(1+\frac{\varepsilon^2 c^2}{4N}\right)^{-1}\sigma.
	\]
	Since this holds for any \(c\in(0,c_*)\) and any sufficiently small \(\varepsilon>0\), it implies the desired conclusion.

	{\bf{ Step 3.}}
Fix \(\hat{c}\in(0,c_*/N)\) and choose \(T^*>T_5\) such that
\[
M_2 e^{-\hat{\sigma} t} \le \min\left\{\frac{\rho(\hat{\sigma})}{2},\,1-\delta\right\}
\quad\text{for }t\ge T^*,
\]
where \(\rho\) is as defined in \eqref{3.11}. 
Then, for some \(\beta>0\) to be chosen later, we define, for \(t\ge T^*\),
\[
\begin{cases}
	\underline{G}(t) := \hat{c}t, \\[2pt]
	\underline{H}(t) := c_*(t-T^*)+\hat{c}T^*-\dfrac{d(N-1)}{\hat{c}}\log\frac{t}{T^*}
	-\beta M_2\bigl(e^{-\hat{\sigma}T^*}-e^{-\hat{\sigma}t}\bigr), \\[2pt]
	\underline{U}(t,r) := (1-M_2e^{-\hat{\sigma}t})q_*\bigl(\underline{H}(t)-r+x(t)\bigr),
\end{cases}
\]
where \(x(t)>0\) is determined by
\[
(1-M_2e^{-\hat{\sigma}t})q_*(x(t))=\delta.
\]
Consequently, \(x(t)\) is decreasing in \(t\), with \(x(\infty)=0\) and \(x(t)\in(0,x(T^*)]\) for all \(t\ge T^*\). Moreover,
\[
\frac{\delta M_2 e^{-\hat{\sigma}t}}{1-M_2e^{-\hat{\sigma}t}}
= q_*(x(t))-q_*(0)=q_*'(\theta(t))x(t)
\]
for some \(\theta(t)\in[0,x(t)]\). It follows that
\[
0\le x(t)\le \tilde{M}_0 e^{-\hat{\sigma}t}\quad\text{for }t\ge T^*,
\ \mbox{ with }\
\tilde{M}_0:=\frac{M_2}{\min_{u\in[0,x(T^*)]}q_*'(u)}.
\]
We shall verify that \((\underline{U},\underline{G},\underline{H})\) is a lower solution.	

		Step 1 yields that \(\underline{H}(T^{*}) = \hat{c} T^{*} \le h(T^{*})\). Thus, by Step 2, we have
	\[
	\underline{U}(T^{*}, r) \le 1 - M_2 e^{-\hat{\sigma}T^{*}} \le u(T^{*}, r) \quad \text{for } r \in [\underline{G}(T^{*}), \underline{H}(T^{*})],
	\]
	and
	\[
	\underline{U}(t, \underline{G}(t)) = \underline{U}(t, \hat{c} t) \le 1 - M_2 e^{-\hat{\sigma} t} \le u(t, \hat{c} t) = u(t, \underline{G}(t)) \quad \text{for } t \ge T^{*}.
	\]
	It is clear that \(\underline{U}(t, \underline{H}(t)) = \delta\). Next, we compute
	\begin{align*}
		-\frac{d}{\delta}\,\underline{U}_r(t,\underline{H}(t))
		&= \frac{d}{\delta}(1 - M_2 e^{-\hat{\sigma}t})\, q'_*(x(t)) \\
		&\ge \frac{d}{\delta}(1 - M_2 e^{-\hat{\sigma}t})
		\left[ q'_*(0) - \max_{\xi\in[0,x(T^*)]} |q''_*(\xi)|\, x(t) \right] \\
		&\ge (1 - M_2 e^{-\hat{\sigma}t})\bigl(c_* - \tilde{M}_1 e^{-\hat{\sigma}t}\bigr) \\
		&\ge c_* - (c_*M_2 + \tilde{M}_1) e^{-\hat{\sigma}t},
	\end{align*}
	where
	\[
	\tilde{M}_1 := \frac{d}{\delta}\,\tilde{M}_0 \max_{\xi\in[0,x(T^*)]} |q_*''(\xi)|.
	\]
	Hence,
	\[
	\underline{H}'(t) = c_* - \frac{d(N - 1)}{\hat{c}t} - \beta M_2 \hat{\sigma} e^{-\hat{\sigma}t} \le c_* - \beta \hat{\sigma} M_2 e^{-\hat{\sigma}t} \le -\frac{d}{\delta}\underline{U}_r(t, \underline{H}(t)) \quad \text{for } t > T^{*},
	\]
	provided that \(\beta\) is chosen large enough so that
	\[
	\beta M_2 \hat{\sigma} \ge c_* M_2 + \tilde{M}_1.
	\]

It remains to prove that
\[
\underline{U}_t - d\left(\underline{U}_{rr} + \frac{N-1}{r}\underline{U}_r\right) - f(\underline{U}) \le 0.
\]
Set \(\zeta = \underline{H}(t) - r + x(t)\). Then, for \(t\ge T^*\) and \(r\in(\hat{c}t,\underline{H}(t))\), we have
\[
\begin{aligned}
	&\underline{U}_t - d\left(\underline{U}_{rr} + \frac{N-1}{r}\underline{U}_r\right) - f(\underline{U}) = \hat{\sigma} M_2 e^{-\hat{\sigma}t} q_*(\zeta)
	+ (1 - M_2 e^{-\hat{\sigma}t})\bigl[\underline{H}'(t) + x'(t)\bigr] q_*'(\zeta) \\
	&\quad - d(1 - M_2 e^{-\hat{\sigma}t}) q_*''(\zeta)
	+ \frac{d(N-1)}{r}(1 - M_2 e^{-\hat{\sigma}t}) q_*'(\zeta)
	- f\bigl((1 - M_2 e^{-\hat{\sigma}t})q_*(\zeta)\bigr) \\
	&\le \hat{\sigma} M_2 e^{-\hat{\sigma}t} q_*(\zeta)
	+ (1 - M_2 e^{-\hat{\sigma}t})
	\left( c_* - \frac{d(N-1)}{\hat{c}t} - \beta M_2 \hat{\sigma} e^{-\hat{\sigma}t} \right) q_*'(\zeta) \\
	&\quad - d(1 - M_2 e^{-\hat{\sigma}t}) q_*''(\zeta)
	+ \frac{d(N-1)}{r}(1 - M_2 e^{-\hat{\sigma}t}) q_*'(\zeta)
	- f\bigl((1 - M_2 e^{-\hat{\sigma}t})q_*(\zeta)\bigr) \\
	&= \hat{\sigma} M_2 e^{-\hat{\sigma}t} q_*(\zeta)
	+ (1 - M_2 e^{-\hat{\sigma}t})\bigl(c_* q_*'(\zeta) - d q_*''(\zeta)\bigr)- \beta M_2 \hat{\sigma} e^{-\hat{\sigma}t}(1 - M_2 e^{-\hat{\sigma}t}) q_*'(\zeta) \\
	&\quad 
	+ (1 - M_2 e^{-\hat{\sigma}t})
	\left( \frac{d(N-1)}{r} - \frac{d(N-1)}{\hat{c}t} \right) q_*'(\zeta) 
 - f\bigl((1 - M_2 e^{-\hat{\sigma}t})q_*(\zeta)\bigr) \\
	&\le \hat{\sigma} M_2 e^{-\hat{\sigma}t} q_*(\zeta)
	- \beta M_2 \hat{\sigma} e^{-\hat{\sigma}t}(1 - M_2 e^{-\hat{\sigma}t}) q_*'(\zeta)  + (1 - M_2 e^{-\hat{\sigma}t}) f(q_*(\zeta))
	- f\bigl((1 - M_2 e^{-\hat{\sigma}t})q_*(\zeta)\bigr).
\end{aligned}
\]	Since \( q_{*}(\zeta) \to 1 \) as \( \zeta \to \infty \), there exists \( \zeta_{\rho}> 0 \) such that \( q_{*}(\zeta) \geq 1 - \rho_{\hat{\sigma}}/2 \) for \( \zeta \geq \zeta_\rho \).
	For \( \zeta \geq \zeta_{\rho} \), we have
	\[
	\begin{aligned}
		\underline{U}_{t}&- d\left(\underline{U}_{r r}+\frac{N-1}{r} \underline{U}_{r}\right)-f(\underline{U}) 
		\leq  \hat{\sigma} M_2 e^{-\hat{\sigma} t} q_{*}(\zeta)-\beta M_2 \hat{\sigma} e^{-\hat{\sigma} t}\left(1-M_2 e^{-\hat{\sigma} t}\right) q_{*}^{\prime}(\zeta) \\
		& -M_2 e^{-\hat{\sigma} t}\left\{f\left(q_{*}(\zeta)\right)-f^{\prime}\left(q_{*}(\zeta)-\hat{\theta}_{\zeta, t} M_2 e^{-\hat{\sigma} t} q_{*}(\zeta)\right) q_{*}(\zeta)\right\} \\
		= & -M_2 e^{-\hat{\sigma} t} f\left( q_{*}(\zeta)\right)-\beta M_2 \hat{\sigma} e^{-\hat{\sigma} t}\left(1-M_2 e^{-\hat{\sigma} t}\right)  q_{*}^{\prime}(\zeta) \\
		& +M_2 e^{-\hat{\sigma} t}\left\{f^{\prime}\left( q_{*}(\zeta)-\hat{\theta}_{\zeta, t}  M_2 e^{-\hat{\sigma} t}  q_{*}(\zeta)\right)+\hat{\sigma}\right\}  q_{*}(\zeta) \leq 0,
	\end{aligned}
	\]
		for some  $\hat{\theta}_{\zeta, t} \in(0,1) $, where we have used the mean value theorem.
			For $ 0 \leq \zeta \leq \zeta_{\rho}$, we have 
\[
		\begin{aligned}
			\underline{U}_{t}&-  d\left(\underline{U}_{r r}+\frac{N-1}{r} \underline{U}_{r}\right)-f(\underline{U}) 
			\leq  \hat{\sigma} M_2 e^{-\hat{\sigma} t} q_{*}(\zeta)-\beta M_2 \hat{\sigma} e^{-\hat{\sigma} t}\left(1-M_2 e^{-\hat{\sigma} t}\right) q_{*}^{\prime}(\zeta) \\
			& -M_2 e^{-\hat{\sigma} t}\left\{f\left(q_{*}(\zeta)\right)-f^{\prime}\left(q_{*}(\zeta)-\theta_{\zeta, t}^{\prime} M_2 e^{-\hat{\sigma} t} q_{*}(\zeta)\right) q_{*}(\zeta)\right\} \\
			= & -M_2 e^{-\hat{\sigma} t} f\left(q_{*}(\zeta)\right)-\beta M_2 \hat{\sigma} e^{-\hat{\sigma} t}\left(1-M_2 e^{-\hat{\sigma} t}\right) q_{*}^{\prime}(\zeta) \\
			& +M_2 e^{-\hat{\sigma} t}\left\{f^{\prime}\left(q_{*}(\zeta)-\theta_{\zeta, t}^{\prime} M_2 e^{-\hat{\sigma} t} q_{*}(\zeta)\right)+\hat{\sigma}\right\} q_{*}(\zeta) \\
			\leq &-\beta M_2 \hat{\sigma} e^{-\hat{\sigma} t}\left(1-M_2 e^{-\hat{\sigma} t}\right) q_{*}^{\prime}(\zeta)  +M_2 e^{-\hat{\sigma} t}\left\{\max _{0 \leq s \leq 1} f^{\prime}(s)+\hat{\sigma}\right\} \\
			= & M_2 e^{-\hat{\sigma} t}\left\{\max _{0 \leq s \leq 1} f^{\prime}(s)+\hat{\sigma}-\beta \hat{\sigma}\left(1-M_2 e^{-\hat{\sigma} t}\right) q_{*}^{\prime}(\zeta)\right\} \leq 0.			
		\end{aligned}
	\]
provided that  $$
\beta \geq \frac{\max_{0 \leq s \leq 1} f'(s) + \hat{\sigma}}{\delta \hat{\sigma} \min_{\zeta \in [0, \zeta_\rho]} q_*'(\zeta)}
.$$

There exists \(T^{**}\ge T^*\) such that \(\underline{G}(t)<\underline{H}(t)\) for all \(t\ge T^{**}\), with \(\underline{G}(T^{**})=\underline{H}(T^{**})<h(T^{**})\). Since \(\underline{U}(T^{**},\underline{G}(T^{**}))<u(T^{**},\underline{G}(T^{**}))\), we obtain that \((\underline{U},\underline{G},\underline{H})\) is a lower solution of \eqref{1.1} for \(t\ge T^{**}\). Consequently,
\[
u(t,r)\ge \underline{U}(t,r),\qquad h(t)\ge \underline{H}(t)
\quad\text{for }t\ge T^{**}\text{ and }r\in[\underline{G}(t),\underline{H}(t)].
\]
Hence, for \(t\ge T^{**}\) and \(\hat{c}t\le r\le \underline{H}(t)\),
\[
u(t,r)\ge (1-M_2e^{-\hat{\sigma}t})q_*(\underline{H}(t)-r)
\ge q_*(\underline{H}(t)-r)-M_2e^{-\hat{\sigma}t}.
\]

For any \(c\in(0,c_*)\) and any \(k\in(0,c_*-c)\), there exists \(T^{***}>0\) such that for all \(t\ge T^{***}\) and \(r\in[0,ct]\),
\[
h(t)-r\ge (c_*-c)t-\frac{d(N-1)}{\hat{c}}\log\frac{t}{T^*}+\hat{c}T^*-\beta M_2\ge kt.
\]
Using the fact that there exist constants \(M_3>0\) and \(\sigma_1>0\) such that
$
q_*(z)\ge 1-M_3e^{-\sigma_1 z}$ for $z\ge0$,
we obtain constants \(M_4>0\) and \(\sigma_2<\hat{\sigma}\) such that
\[
h(t)\ge c_*t-M_4\log t,\qquad
u(t,r)\ge 1-M_4e^{-\sigma_2 t}
\quad\text{for }t\ge T^{***}\text{ and }r\in[\hat{c}t,ct].
\]
Combining this with Step 2, we conclude that
\[
u(t,r)\ge 1-M_*e^{-\sigma_* t},\qquad
h(t)\ge c_*t-M_*\log t
\quad\text{for }t\ge T_*\text{ and }r\in[0,ct],
\]
provided that \(M_*>M_4\) and \(T_*\ge T^{***}\) are chosen sufficiently large  and \(\sigma_*\in(0,-f'(1))\). This completes the proof.

	\end{proof}

\begin{remark}
	From the proof of Lemma 4.4, we can easily obtain, 
		\begin{equation}\label{04.13}
		u(t, r) \geq (1 - M_2e^{-\hat{\sigma} t}) \, q_{*} \bigl( c_* t - M_* \log t - r \bigr) \\
		\geq (1 - M_2e^{-\hat{\sigma} t}) \Bigl( 1 - M^* e^{-\sigma^* (c_* t - M_* \log t - r)} \Bigr) 
	\end{equation}
for \( t \geq T_* \) and \( r \in [\hat{c}t, c_* t - M_* \log t] \), where $\sigma^*>0$ and $M^*>0$.
	
\end{remark}

	\subsection{Sharp bounds}
In this subsection, we construct upper and lower solutions via perturbation-related semi-wave solutions to obtain the bounds for $h(t)-c_*t+c_N\log t$.

	\begin{lemma}\label{le4.6}
		There exist \( C > 0 \) and \( T > 0 \) such that
		\[
		h(t) \geq c_* t - c_N \log t - C \quad \text{for } t \geq T,
		\]
		where \( c_N \) is given by Theorem \ref{th1.4}.
	\end{lemma}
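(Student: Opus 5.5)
We construct a lower solution on the moving annulus $[\underline G(t),\underline H(t)]$, with $\underline G(t)=\hat c t$ as in Step 3 of Lemma \ref{le4.4}. Instead of the single semi-wave $q_*$, we use the perturbed family of Proposition \ref{th1.3} and the function $g$ from Section 4.1. For $t\ge T$ set
\[
\underline H(t):=c_*t-c_N\log t-\beta M_* e^{-\sigma_* t}-K,\qquad \xi(t):=c_*-\frac{c_N}{t},\qquad \mu(t):=\mu(\xi(t)),
\]
and
\[
\underline U(t,r):=(1-M_*e^{-\sigma_* t})\,q_{g(\xi(t))}\bigl(\mu(t);\underline H(t)-r+x(t)\bigr),
\]
where $q_{g(\xi)}(\mu;\cdot)$ is the semi-wave of \eqref{3.6} with $\mu=\mu(\xi)$ and speed $c=g(\xi)=c_*(\mu(\xi))$. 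The shift $x(t)=O(e^{-\sigma_*t})$ is fixed by $\underline U(t,\underline H(t))=\delta$, exactly as in Lemma \ref{le4.4}, and $K,\beta$ are large constants. This choice of $\mu$ gives $q'(0)=c_*(\mu)/\mu=\xi/\mu_0$. Hence $-\frac{d}{\delta}\underline U_r(t,\underline H)=\mu_0 q'(x(t))(1-M_*e^{-\sigma_*t})\ge \xi(t)-Ce^{-\sigma_* t}$. Since $\underline H'(t)=c_*-c_N/t+O(e^{-\sigma_*t})$, the free boundary inequality $\underline H'\le -\frac d\delta \underline U_r$ holds once $\beta$ is large. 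This is why the Stefan-type speed $\mu$ is tuned to $\xi(t)$.

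The core step is the differential inequality. Write $\zeta=\underline H-r+x$ and $c(t)=g(\xi(t))$. Using $dq''-c(t)q'+f(q)=0$, the main terms are
\[
\bigl[\underline H'(t)-c(t)+\tfrac{d(N-1)}{r}\bigr]q'(\zeta)+\partial_t\bigl(q_{c(t)}(\mu(t);\zeta)\bigr),
\]
plus the $e^{-\sigma_*t}$ perturbation terms, which are handled as in Lemma \ref{le4.4} (split into $\zeta\ge\zeta_\rho$ and $\zeta\le\zeta_\rho$, with $\beta$ large). By \eqref{cng}, $c(t)=c_*-g'(c_*)c_N/t+O(t^{-2})$. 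So $\underline H'-c(t)=-c_N(1-g'(c_*))/t+O(t^{-2})=-\frac{d(N-1)}{c_*t}+O(t^{-2})$, which is exactly the definition \eqref{cn} of $c_N$.

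The curvature term is not constant in $r$: near the front $r\approx c_*t-O(\log t)$, so $\frac{d(N-1)}{r}=\frac{d(N-1)}{c_*t}+O(\zeta/t^2)+O(\log t/t^2)$. Thus the $1/t$ terms cancel to leading order and leave remainders of size $(1+\zeta+\log t)q'(\zeta)/t^2$. The term $\partial_t q$ is $O(t^{-2})$ times $\partial_\mu q$. This is where Lemma \ref{le3.1} (the $C^2$ dependence of $P_c$ on $c$) enters, and we need $|\partial_\mu q(\mu;\zeta)|\lesssim q'(\zeta)(1+\zeta)$, which follows from exponential decay of $q'$ and of the $c$-derivative. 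The remainder $O(\log t/t^2)$ is not integrable against nothing, so we must absorb it.

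The main obstacle is that the $+$ sign is lost in these $O(t^{-2}\log t)$ errors. To fix this we add a correction $-C_0\,t^{-1}\log t$ (or a slightly larger coefficient $c_N+\epsilon$ replaced by a $1/t^{1+\gamma}$ correction) into $\underline H$. Its derivative $\sim -\log t/t^2$ has the correct sign to dominate the error terms where $q'$ is not small. For $\zeta\ge\zeta_\rho$ we instead use the $\hat\sigma$-absorption, as in Lemma \ref{le4.4}, by making the $e^{-\sigma_*t}$ amplitude decay slower than any power, e.g.\ replacing $M_*e^{-\sigma_*t}$ by $M/t^2$. This correction changes $\underline H$ only by a bounded amount, which does not affect the conclusion.

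For the initial and inner-boundary comparison, we use Lemma \ref{le4.4} and \eqref{04.13}. These give $h(T)\ge c_*T-M_*\log T\ge \underline H(T)$ once $K$ is large, and $u\ge 1-M_*e^{-\sigma_*t}\ge\underline U$ on $r=\hat c t$ and at $t=T$. Then the comparison principle, Lemma \ref{le2.2}, yields $h(t)\ge\underline H(t)\ge c_*t-c_N\log t-C$.
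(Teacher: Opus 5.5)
Once the signs discussed below are fixed, your argument works, and its core is the paper's:
\begin{itemize}
\item the family $q(\mu(c_*-c_Nt^{-1}),\cdot)$, chosen so that $\mu_0q'(0)=\xi(t)$ matches the free-boundary condition;
\item the cancellation of the $t^{-1}$ terms via \eqref{cng} and \eqref{cn};
\item a $t^{-1}\log t$ correction of the front to beat the $O(t^{-2}\log t)$ errors;
\item a vertical perturbation of size $t^{-2}$ (the paper subtracts $t^{-2}\log t$, you multiply by $1-Mt^{-2}$), which absorbs the $O(t^{-2})$ term $q_\mu\mu'(\xi)\xi'$ where $q\approx1$.
\end{itemize}

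What differs is the localization. The paper works only on the band $[\underline h(t)-M\log t,\underline h(t)]$. There $r\ge c_*t-\hat M\log t$ gives $\frac{d(N-1)}{r}-\frac{d(N-1)}{c_*t}=O(t^{-2}\log t)$ uniformly, at the price of the inner-boundary estimate \eqref{4.15}, obtained from \eqref{04.13}. You keep the annulus $[\hat ct,\underline H(t)]$ of Lemma \ref{le4.4}. This makes the inner boundary trivial, but the curvature error is $\frac{d(N-1)(c_*t-r)}{rc_*t}q'(\zeta)\lesssim t^{-2}(\log t+\zeta)q'(\zeta)$, which is of order $t^{-1}q'$ near $r=\hat ct$. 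This is still fine:
\begin{itemize}
\item The $t^{-2}\log t\,q'$ part is proportional to $q'$ and is beaten by the log correction.
\item $\sup_{\zeta\ge0}(1+\zeta)q'(\mu;\zeta)$ is bounded uniformly for $\mu$ near $\mu_0$. So the $t^{-2}\zeta q'$ part is $O(t^{-2})$; it is absorbed by $\sigma_0Mt^{-2}$ where $q\ge1-\rho$, and by the log correction where $q'$ is bounded below.
\end{itemize}
That is exactly why your amplitude must be of order $t^{-2}$ rather than $e^{-\sigma_*t}$; ``slower than any power'' is not the right description. Also, boundedness of $q_\mu$ is all you need; the sharper bound $|q_\mu|\lesssim(1+\zeta)q'$ is not required.

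As written, though, both corrections to $\underline H$ have the wrong sign.
\begin{itemize}
\item With $-\beta M_*e^{-\sigma_*t}$ in $\underline H$, you get $+\beta\sigma_*M_*e^{-\sigma_*t}$ in $\underline H'$. Enlarging $\beta$ then destroys $\underline H'\le\xi(t)-Ce^{-\sigma_*t}$ instead of securing it; Lemma \ref{le4.4} uses $+\beta M_2e^{-\hat\sigma t}$.
\item Likewise $\frac{d}{dt}\bigl(-C_0t^{-1}\log t\bigr)=C_0(\log t-1)t^{-2}>0$. You need $+C_0t^{-1}\log t$, which is exactly the paper's $\underline k(t)=c_*t-c_N\log t+Bt^{-1}\log t$.
\end{itemize}
With amplitude $Mt^{-2}$, this corrected term alone also yields the free-boundary inequality, since $B(\log t-1)\ge CM$ for large $t$. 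So the $\beta$-term can be dropped.

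The alternative ``coefficient $c_N+\epsilon$'' must be discarded: it only gives $h(t)\ge c_*t-(c_N+\epsilon)\log t-C$, which is weaker than the lemma.

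Finally, at $t=T$ Lemma \ref{le4.4} gives $u\ge1-M_*e^{-\sigma_*T}$ only on $[0,cT]$. On $[cT,\underline H(T)]$ you have two options:
\begin{itemize}
\item Use \eqref{04.13} with $K$ of order $\log T$. Then $Kt^{-2}q'\lesssim t^{-2}\log t\,q'$ for $t\ge T$, which is still absorbed.
\item More simply, use the paper's time shift. Your inner-boundary inequality $u(t+s,\hat ct)\ge1-M_*e^{-\sigma_*t}\ge1-Mt^{-2}$ holds for every $s\ge0$. So $h\to\infty$ and $u\to1$ locally uniformly let you start the comparison at time $T+T_1$.
\end{itemize}
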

	
	\begin{proof}
Fit $t_0>0$ such that $\mu(c_*-c_Nt_0^{-1})<\mu^*$ and define
		\[
		\begin{cases}
			\underline{k}(t) = c_* t - c_N \log t + B t^{-1} \log t,\\[4pt]
			\underline{u}(t, r) = q\bigl(\mu(c_* - c_N t^{-1}),\;  \underline{k}(t)-r\bigr) - t^{-2} \log t,
		\end{cases}
		\] 
		where $q=q(\mu,r):=q_{c_*(\mu)}(r)$ is given in Proposition \ref{th1.3}, $t>t_0$ and $B>0$ will be given later.
		
		Direct calculation gives that for all large $t$
\[
\begin{cases}
	\underline{u}(t, \underline{k}(t)) = \delta-t^{-2}\log t < \delta,\\[4pt]
	\underline{u}(t, \underline{k}(t)-t^{-1}) = q\bigl(\mu(c_* - c_N t^{-1}),\; t^{-1}\bigr) - t^{-2} \log t =\delta+ q_r(\mu_0, 0)t^{-1} + o(t^{-1}) > \delta,\\[4pt]
	\underline{u}_r(t,r) = -q_r\bigl(\mu(c_* - c_N t^{-1}),\ \underline{k}(t)-r\bigr) < 0 \quad \text{for } r\in(0, \underline{k}(t)).
\end{cases}
\] 
Hence, there exists a unique smooth function $\underline{h}	(t)	\in (\underline{k}(t)-t^{-1}, \underline{k}(t))$ such that 
$\underline{u}(t, \underline{h}(t))=\delta$.

Since \( q_r(\mu_0, 0) = c_* / \mu_0 \), we thus use the mean value theorem to obtain
\[
\underline{h}(t) - \underline{k}(t) = \left[ -\frac{\mu_0}{c_*} + o(1) \right] t^{-2} \log t \quad \text{for all large } t.
\]
Using \( \underline{u}_t(t, \underline{h}(t)) + \underline{u}_r(t, \underline{h}(t))\underline{h}'(t) = 0 \) we obtain
\[
q_\mu \cdot \mu' \cdot c_N t^{-2} - q_r \cdot \left[ \underline{h}'(t) - \underline{k}'(t) \right] + [1 + o(1)] 2t^{-3} \log t = 0.
\]
Since $q_\mu(\mu_0, 0)=0$, it follows that
\begin{equation}\label{4.13}
	\underline{h}'(t) = \underline{k}'(t) + o(t^{-2}) = c_* - c_N t^{-1} - B t^{-2} \log t + O(t^{-2})
\end{equation}
for all large \( t \).

In the following we prove that there exist positive constants \( M \) and \( T \) such that \( (\underline{u}(t, r), \underline{h}(t)) \) satisfies, for \( t \geq T \) and \( \underline{h}(t) - M \log t \leq r \leq \underline{h}(t) \),
\begin{equation}\label{4.14}
	\underline{u}(t, \underline{h}(t)) = \delta, \quad 
	\underline{h}'(t) \leq -\mu_0 \underline{u}_r(t, \underline{h}(t)), 
\end{equation}
\begin{equation}\label{4.15}
	\underline{u}(t, \underline{h}(t) - M \log t) \leq u(t + s, \underline{k}(t + s) - M \log(t + s)), \qquad \forall s > 0, 
\end{equation}
\begin{equation}\label{4.16}
	\underline{u}_t - d\underline{u}_{rr} - \frac{d(N - 1)}{r} \underline{u}_r - f(\underline{u}) \leq 0.
\end{equation}
Moreover, we will show that the above inequalities imply that there exists $T_1>0$ such that
\begin{equation}\label{4.17}
	\underline{h}(t) \leq h(t + T_1), \quad 
	\underline{u}(t, r) \leq u(t + T_1, r) \quad \text{for } r \in (\underline{h}(t) - M \log t, \underline{h}(t)) \text{ and } t \geq T. 
\end{equation}
Clearly the required estimate for \( h(t) \) follows directly from  \eqref{4.17} and \eqref{4.13}.

By the definition of \( \underline{h}(t) \), we have \(u(t, \underline{h}(t) ) = \delta \). Direct calculation gives
\begin{align*}
	\underline{u}_r(t, \underline{h}(t))
	&= -q_r(\mu(c_* - c_N t^{-1}), \underline{k}(t) - \underline{h}(t)) \\
	&= -q_r(\mu(c_* - c_N t^{-1}), 0) - [q_{rr}(\mu_0, 0) + o(1)] [\underline{k}(t) - \underline{h}(t)] \\
	&= -\frac{1}{\mu_0} (c_* - c_N t^{-1}) + [q_{rr}(\mu_0, 0) + o(1)] \left[ -\frac{\mu_0}{c_*} + o(1) \right] t^{-2} \log t.
\end{align*}
Using
\[
dq_{rr}(\mu_0, r) - c_* q_r(\mu_0, r) + f(q(\mu_0, r)) = 0\ \mbox{ and }\  f(q(\mu_0, 0)) = f(\delta).
\]
we obtain
\[
q_{rr}(\mu_0, 0) = \frac{c_*}{d} q_r(\mu_0, 0)-\frac{f(\delta)}{d} = \frac{c_{*}^2}{d\mu_0}-\frac{f(\delta)}{d}.
\]
It follows that
\begin{align*}
	-\mu_0 \underline{u}_r (t, \underline{h}(t))
	&= c_* - c_N t^{-1} + \left(\frac{c_*}{d}-\frac{\mu_0}{c_* d} f(\delta)\right)\mu_0 t^{-2} \log t + o(t^{-2} \log t) \\
	&> c_* - c_N t^{-1} - B t^{-2} \log t + O(t^{-2}) \\
	&= \underline{h}'(t) ,
\end{align*}
for all large $t$. Hence \eqref{4.14} holds, provided that $B>\mu_0(-\frac{c_*}{d}+\frac{\mu_0 f(\delta)}{c_* d})$.

	Since	
	\[
	c_* t - M_*\log t - \left[ \underline{k}(t) - M \log t \right] = \left( c_N + M - M_* \right) \log t + o(1) > (M/2) \log t
	\]
		for all large \( t \), provided that \( M > 2M_* \), we obtain from \eqref{04.13} that
	\[
		u(t, \underline{k}(t) - M \log t) \geq \left( 1 - M_2 e^{-\hat{\sigma} t} \right) \left( 1 - M^* t^{-\sigma^* M/2} \right) > 1 - t^{-2}
	\]
		for all large \( t \), provided that \( M > 4/\sigma^* \). We now fix \( M \) such that \( M > \max\{2M_*, 4/\sigma^*\} \). Thus
		\[
	u(t + s, \underline{h}(t + s) - M \log(t + s)) > 1 - (t + s)^{-2} > 1 - t^{-2} \log t > \underline{u}(t, \underline{h}(t) - M \log t)
	\]
		for all large \( t \) and every \( s > 0 \). This proves \eqref{4.15}.
	
	By the definition of $\underline{u}$, we obtain
\begin{align*}
	&\quad\quad \underline{u}_t - d\underline{u}_{rr} - \frac{d(N-1)}{r} \underline{u}_r - f(\underline{u})\\
	&= O(t^{-2}) - q_r \left[ -c_* + c_N t^{-1} + B t^{-2} \log t - B t^{-2} - \frac{d(N-1)}{r} \right] - dq_{rr} - f(q - t^{-2} \log t) \\
	&= O(t^{-2}) - q_r \left[ g(\xi) - g(c_*) + c_N t^{-1} + B t^{-2} \log t - B t^{-2} - \frac{d(N-1)}{r} \right] \\
	&\qquad + g(\xi) q_r - dq_{rr} - f(q - t^{-2} \log t) \\
	&= O(t^{-2}) - q_r K + f(q) - f(q - t^{-2} \log t),
\end{align*}
where
\[
K := g(\xi) - g(c_*) + c_N t^{-1} + B t^{-2} \log t - B t^{-2} - \frac{d(N - 1)}{r}.
\]

For \( r \in [\underline{h}(t) - M \log t, \underline{h}(t)] \), we have
\begin{align*}
	r &\geq \underline{h}(t) - M \log t \\
	&= \underline{k}(t) - M \log t + O(t^{-2} \log t) \\
	&= c_* t - (c_N + M) \log t + B t^{-1} \log t + O(t^{-2} \log t) \\
	&\geq c_* t - \hat{M} \log t ,
\end{align*}
for all large $t$, where \( \hat{M} = c_N + M \). It follows that, for such \( r \),
\[
\frac{d(N - 1)}{r} \leq \frac{d(N - 1)}{c_* t - \hat{M} \log t}
= \frac{d(N - 1)}{c_* t} + \frac{d(N - 1) \hat{M} \log t}{c_*^{2} t^2} \bigl[ 1 + o(1) \bigr].
\]
Therefore, by \eqref{cng} and the definition of $c_N$, we have
\begin{align*}
K &\geq -g'(c_*) c_N t^{-1} + c_N t^{-1} - \frac{d(N - 1)}{c_*} t^{-1} + \left[ B - \frac{d(N - 1) \hat{M}}{c_*^{2}} \right] t^{-2} \log t + o(t^{-2} \log t) \\
	&= \left[ B - \frac{d(N - 1)\hat{M}}{c_*^{2}} + o(1) \right] t^{-2} \log t > 0
\end{align*}
for all large \( t \), provided that \( B \) is large enough.

We now fix $\epsilon_{0}>0$ small so that $f^{\prime}(u) \leq -\sigma_{0}<0$ for $u \in [1-2\epsilon_{0}, 1+2\epsilon_{0}]$. Then when $q(\mu(\xi), r-k(t)) \in [1-\epsilon_{0}, 1]$ we have
\[
f(q)-f\bigl(q-t^{-2}\log t\bigr) \leq -\sigma_{0}\, t^{-2}\log t
\]
for all large $t$. Hence in such a case,
\[
O(t^{-2})-q_{r} K+f(q)-f\bigl(q-t^{-2}\log t\bigr) \leq O(t^{-2})-\sigma_{0}\, t^{-2}\log t < 0
\]
for all large $t$.

If $q(\mu(\xi), \underline{k}(t)-r) \in [0,1-\epsilon_{0}]$, then we can find $\sigma_{1}>0$ such that $q_{r} \geq \sigma_{1}$, and hence
\[
q_{r} K \geq \sigma_{1}\left[B-\frac{d(N-1)\hat{ M}}{c_*^{2}}+o(1)\right] t^{-2}\log t.
\]
On the other hand, there exists $\sigma_{2}>0$ such that
\[
f(q)-f\bigl(q-t^{-2}\log t\bigr) \leq \sigma_{2}\, t^{-2}\log t.
\]
Hence	
	\[
	\begin{array}{l}
		O\left(t^{-2}\right)- q_{r} K+f(q)-f\left(q-t^{-2} \log t\right)\\
		 \leq -\sigma_{1}\left[B-\frac{d(N-1) \hat{M}}{c_*^{2}}+o(1)\right] t^{-2} \log t+\sigma_{2} t^{-2} \log t+O\left(t^{-2}\right)  < 0
	\end{array}
	\]
	for all large \(t\), provided that \(B\) is large enough. This proves \eqref{4.16}.

We are now in a position to derive the desired conclusion. Since \(h(t)\to\infty\) and \(u(t,r)\to1\) locally uniformly in \(r\in[0,\infty)\) as \(t\to\infty\), there exists \(T_1>T\) such that
\[
h(T_1+T)>\underline{h}(T),\qquad
u(T_1+T,r)>\underline{u}(T,r)
\quad\text{for }r\in[\underline{h}(T)-M\log T,\,\underline{h}(T)],
\]
where \(T>0\) is chosen so that \eqref{4.14}, \eqref{4.15}, and \eqref{4.16} hold for all \(t\ge T\). We now apply the comparison principle Lemma \ref{le2.2} to conclude that
\[
h(T_1+T+t)\ge \underline{h}(T+t),\qquad
u(T_1+T+t,r)\ge \underline{u}(T+t,r)
\]
for all \(t>0\) and \(r\in[\underline{h}(T+t)-M\log(T+t),\,\underline{h}(T+t)]\). This completes the proof.

	\end{proof}

		\begin{lemma}\label{le4.7}
		There exist \( C > 0 \) and \( T > 0 \) such that
		\[
		h(t) \leq c_* t - c_N \log t + C \quad \text{for } t \geq T,
		\]
		where \( c_N \) is given by Theorem \ref{th1.4}.
	\end{lemma}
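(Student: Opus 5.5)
We mirror the proof of Lemma~\ref{le4.6}, now building an upper solution over the whole interval $[0,\bar h(t)]$ by means of Lemma~\ref{le2.3}. For $t\ge t_0$ we set
\[
\bar k(t)=c_*t-c_N\log t-Bt^{-1}\log t,\qquad
\bar u(t,r)=\bigl(1+t^{-2}\log t\bigr)\,q\bigl(\mu(c_*-c_Nt^{-1}),\,\bar k(t)-r\bigr),
\]
where $q(\mu,\cdot)$ is extended by $1$ (or by its limit) for negative arguments where needed. The free boundary $\bar h(t)$ is then defined by $\bar u(t,\bar h(t))=\delta$. Because $q_r(\mu_0,0)=c_*/\mu_0>0$, the mean value theorem gives $\bar h(t)-\bar k(t)=[\delta\mu_0/c_*+o(1)]t^{-2}\log t$. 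Differentiating the identity $\bar u(t,\bar h(t))=\delta$, and using $q_\mu(\mu_0,0)=0$, we get $\bar h'(t)=c_*-c_Nt^{-1}+Bt^{-2}\log t+O(t^{-2})$.

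For the free boundary inequality we expand $-\mu_0\bar u_r(t,\bar h(t))$ exactly as in Lemma~\ref{le4.6}, using $q_{rr}(\mu_0,0)=c_*^2/(d\mu_0)-f(\delta)/d$. This gives $c_*-c_Nt^{-1}+O(t^{-2}\log t)$ with a fixed constant, so the inequality $\bar h'\ge-\mu_0\bar u_r$ holds once $B$ is large.

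The main step is the differential inequality. Writing $\xi=c_*-c_Nt^{-1}$ and $g(\xi)=c_*(\mu(\xi))$, we obtain
\[
\mathcal L\bar u=O(t^{-2})+(1+t^{-2}\log t)q_r\,\bar K+(1+t^{-2}\log t)f(q)-f\bigl((1+t^{-2}\log t)q\bigr),
\]
where
\[
\bar K=g(c_*)-g(\xi)-c_Nt^{-1}+Bt^{-2}\log t+\frac{d(N-1)}{r}.
\]
Here the sign of the geometric term is favourable, since $\frac{d(N-1)}{r}>0$. By \eqref{cng}, $g(c_*)-g(\xi)=g'(c_*)c_Nt^{-1}+O(t^{-2})$.

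The region $r\ge c_*t-\hat M\log t$ is handled using $\frac{d(N-1)}{r}\ge\frac{d(N-1)}{c_*t}$. Together with the definition \eqref{cn} of $c_N$, namely $c_N(1-g'(c_*))=d(N-1)/c_*$, this gives $\bar K\ge[B+o(1)]t^{-2}\log t>0$. Near $q\approx1$ we use $f'\le-\sigma_0$, so that $(1+\epsilon)f(q)-f((1+\epsilon)q)\ge\sigma_0\epsilon q-O(\epsilon f(q))$, which is positive. Away from $1$, $q_r\ge\sigma_1>0$ and the large $B$ absorbs the $O(t^{-2}\log t)$ reaction error. Since $\bar u$ is an upper solution, we need $\mathcal L\bar u\ge0$, and these two cases give it.

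The expected obstacle is the inner region $r\in[0,c_*t-\hat M\log t]$. There we use the upper bound of Lemma~\ref{le4.4}, $u\le1+M_*e^{-\sigma t}$, and note that $\bar u\ge1+t^{-2}\log t-Ce^{-\sigma_1(\bar k-r)}\ge 1+\tfrac12t^{-2}\log t$ when $\bar k-r\ge\hat M\log t$ with $\hat M$ large. Rather than verifying the PDE there, we apply the left-boundary version (Lemma~\ref{le2.2}) with $\bar g(t)=\bar k(t)-\hat M\log t$. This requires $\bar u(t,\bar g(t))\ge u(t+T_1,\bar g(t))$. That condition holds because $1+\tfrac12t^{-2}\log t\ge1+M_*e^{-\sigma(t+T_1)}$ for large $t$.

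For the initial ordering we choose $T_1$ by time-shifting the upper solution rather than $u$: we use a time shift $\bar u(t+T_2,\cdot)$ with $T_2$ large, and we may also add a large constant to $\bar k$. This is possible since $h(T)\le C_2T+h_0$ and $u\le1+M_1e^{-\sigma t}$ at the start. Comparison then yields $h(t)\le\bar h(t+T_2)+C$, which is $c_*t-c_N\log t+C'$.
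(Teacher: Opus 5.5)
Your ansatz and its computations are sound and essentially match the paper's; the paper perturbs additively, $q+t^{-2}\log t$, rather than multiplicatively, which is immaterial. The expansion of $\bar h-\bar k$, the free-boundary inequality for large $B$, and the bound $\bar K\ge[B+o(1)]t^{-2}\log t$ via $c_N(1-g'(c_*))=d(N-1)/c_*$ all check out. There is one minor slip: $q(\mu,\cdot)$ must be continued to $z\in[-\epsilon_0,0]$ through the ODE, where $q<\delta$ and $q_r>0$. Extending it by $1$ would make $\bar u$ jump at $r=\bar k(t)$, and no $\bar h$ would exist.

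The genuine gap is in how you set up the comparison. You check the left-boundary inequality for the pair $\bar u(t,\cdot)$ versus $u(t+T_1,\cdot)$. That shift only makes $\bar h(T)>h(T+T_1)$ harder, so for the initial ordering you switch to $\bar u(t+T_2,\cdot)$ versus $u(t,\cdot)$. Two things break for this pair.
First, the left-boundary condition becomes $\tfrac12(t+T_2)^{-2}\log(t+T_2)\ge M_*e^{-\sigma t}$ for all $t\ge T$. You have not verified this. It requires $T$ large compared with $\log T_2$, while $T_2$ must be of order $T$, because all you know is $h(T)\le C_2T+h_0$.
Second, and more seriously, your moving boundary $\bar g(t+T_2)=\bar k(t+T_2)-\hat M\log(t+T_2)$ sits roughly $c_*T_2-\hat M\log(t+T_2)$ ahead of the only available lower bound $h(t)\ge c_*t-c_N\log t-C$ from Lemma~\ref{le4.6}. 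So the hypothesis $\bar g\le h$ of Lemma~\ref{le2.2} cannot be guaranteed, and $u(t,\bar g(t+T_2))$ need not even be defined.

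The missing observation is that the inner region is no obstacle for an upper solution. The geometric term $-\frac{d(N-1)}{r}\bar u_r=\frac{d(N-1)}{r}(1+t^{-2}\log t)q_r\ge0$ has the favourable sign. Since $\bar h(t)<c_*t$ for large $t$, your inequality $\frac{d(N-1)}{r}\ge\frac{d(N-1)}{c_*t}$, and with it $\bar K\ge[B+o(1)]t^{-2}\log t$, holds on all of $(0,\bar h(t)]$. Where $q\approx1$, the term $\sigma_0\epsilon q$ handles the rest.

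So use a fixed left boundary $r=1$, as the paper does, and keep your time shift with $T_2=KT$, $c_*(K+1)>C_2+1$ and $T$ large. Since $\bar u(s,1)\ge1+\tfrac12 s^{-2}\log s$ for large $s$ and $\tfrac12((K+1)t)^{-2}\ge M_*e^{-\sigma t}$ eventually (polynomial against exponential), you get three things:
$\bar u(t+T_2,1)\ge u(t,1)$ for $t\ge T$;
$\bar h(T+T_2)>h(T)+T$;
$\bar u(T+T_2,r)\ge\bar u(T+T_2,h(T))\ge u(T,r)$ on $[1,h(T)]$.
Lemma~\ref{le2.2} then gives $h(t)\le\bar h(t+KT)\le c_*t-c_N\log t+c_*KT+1$.

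Do not replace the shift by adding to $\bar k$ a constant chosen after $T$. This is what the paper does, with $C=h(T)-c_*T+c_N\log T+2T$. Such a constant leaves $\xi=c_*-c_Nt^{-1}$ unchanged but moves $\bar h(t)$ to about $c_*t+C$. Then $\frac{d(N-1)}{c_*t}-\frac{d(N-1)}{\bar h(t)}\approx\frac{d(N-1)C}{c_*t(c_*t+C)}$, which is of order $t^{-1}$ for $t$ comparable to $T\sim C$ and cannot be absorbed by $Bt^{-2}\log t$. The time shift moves $\xi$ together with $\bar k$ and has no such defect.
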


	\begin{proof}
	Fix $t_0>0$ such that $\mu(c_*-c_Nt_0^{-1})<\mu^*$ and define
\[
\begin{cases}
	\bar{k}(t) = c_* t - c_N \log t - B t^{-1} \log t+C,\\[4pt]
	\bar{u}(t, r) = q\bigl(\mu(c_* - c_N t^{-1}),\;  \bar{k}(t)-r\bigr)+ t^{-2} \log t,
\end{cases}
\] 
where $q=q(\mu,r):=q_{c_*(\mu)}(r)$ is given in Proposition \ref{th1.3}, $t>t_0$ and $B>0$, $C>0$ will be given later.

Direct calculation gives that there exists $\varepsilon_0>0$ such that for all large $t$
\[
\begin{cases}
	\bar{u}(t, \bar{k}(t)) = \delta+t^{-2}\log t >\delta,\\[4pt]
	\bar{u}(t, \bar{k}(t)+t^{-1}) = q\bigl(\mu(c_* - c_N t^{-1}),\; -t^{-1}\bigr) + t^{-2} \log t =\delta- q_r(\mu_0, 0)t^{-1} + o(t^{-1})< \delta,\\[4pt]
	\bar{u}_r(t,r) = -q_r\bigl(\mu(c_* - c_N t^{-1}),\ \bar{k}(t)-r\bigr) < 0 \quad \text{for } r\in(0, \bar{k}(t)+\varepsilon_0).
\end{cases}
\] 
Hence, there exists a unique smooth function $\bar{h}	(t)	\in (\bar{k}(t), \bar{k}(t)+t^{-1})$ such that 
$\bar{u}(t, \bar{h}(t))=\delta$.

		By the implicit function theorem we know that \( t \to \bar{h}(t) \) is smooth, and by the mean value theorem we obtain
		Using \( \bar{u}_t(t, \bar{h}(t)) + \bar{u}_r(t, \bar{h}(t)) \bar{h}'(t) = 0 \) we obtain
	\[
	\left\{
	\begin{aligned}
		&\bar{h}(t) - \bar{k}(t) = \left[ \frac{\mu_0}{c_*} + o(1) \right] t^{-2} \log t, \\
		&\bar{h}'(t) = \bar{k}'(t) + o(t^{-2}) = c_* - c_N t^{-1} + B t^{-2} \log t + O(t^{-2}),
	\end{aligned}
	\right.
	\]
		for all large \( t \).

		In the following we want to prove that, by choosing \( B \) and \( C \) properly, there exists a positive constant \( T \) such that  
			\( (\bar{u}(t, r), \bar{h}(t)) \) satisfies, for \( t \geq T \) and \( 1 \leq r \leq \bar{h}(t) \),  	
\[
\left\{
\begin{aligned}
	&\bar{u}(t, \bar{h}(t)) = \delta, \quad \bar{h}'(t) \geq -\mu_0 \bar{u}_r(t, \bar{h}(t)),\quad \bar{u}(t, 1) \geq u(t, 1),\\
	&\bar{u}_t - d\bar{u}_{rr} - \frac{d(N-1)}{r} \bar{u}_r - f(\bar{u}) \geq 0,  \quad \bar{u}(t,r)\geq \delta, \\
	&\bar{h}(T) \geq h(T), \quad \bar{u}(T, r) \geq u(T, r).
\end{aligned}
\right.
\]
			If these inequalities are proved, then we can apply the comparison principle Lemma \ref{le2.2} to conclude that  
			\[
			\bar{h}(t) \geq h(t), \quad \bar{u}(t, r) \geq u(t, r) \quad \text{for } r \in [1, h(t)] \text{ and } t \geq T. 
			\]
				Clearly the required estimate for \( h(t) \) follows directly from  the definition of $\bar{h}(t)$.

			By the definition of \(\bar{h}(t)\), we have \(\bar{u}(t, \bar{h}(t)) = 0\). We now calculate
			\[
			\begin{aligned}
				\bar{u}_r(t, \bar{h}(t)) &= -q_r\bigl(\mu(c_* - c_N t^{-1}), \bar{k}(t) - \bar{h}(t)\bigr) \\
				&= -q_r\bigl(\mu(c_* - c_N t^{-1}), 0\bigr) + \bigl[q_{rr}(\mu_0, 0) + o(1)\bigr]\bigl[\bar{h}(t) - \bar{k}(t)\bigr] \\
				&= -\frac{1}{\mu_0}(c_* - c_N t^{-1}) + \bigl[q_{rr}(\mu_0, 0) + o(1)\bigr]\left[\frac{\mu_0}{c_*} + o(1)\right] t^{-2} \log t \\
				&= -\frac{1}{\mu_0}(c_* - c_N t^{-1}) + [\frac{c_*}{d}-\frac{\mu_0f(\delta)}{dc_*}] t^{-2} \log t + o(t^{-2} \log t).
			\end{aligned}
			\]
				It follows that
			\[
			\begin{aligned}
				-\mu_0 \bar{u}_r(t, \bar{k}(t)) &= c_* - c_N t^{-1} -  (\frac{c_*}{d}-\frac{\mu_0f(\delta)}{d c_*} )\mu_0t^{-2} \log t + o(t^{-2} \log t) \\
				&< c_* - c_N t^{-1} + B t^{-2} \log t + O(t^{-2}) \\
				&= \bar{h}'(t)
			\end{aligned}
			\]
	for all large $t$.
	
	Combining  Lemma \ref{le4.4},	we can obtain,  for some $M^*>0$ and $\sigma^*>0$,
	\begin{align*}
		\bar{u}(t, 1) &= q(\mu(c_* - c_N t^{-1}), \bar{k}(t)-1) + t^{-2} \log t \\
		&\geq 1 - M^* e^{\sigma^* [1 - \bar{k}(t)]}+ t^{-2} \log t \geq 1 + t^{-2} \geq 1 + M_* e^{-\sigma t} \geq u(t, 1)
	\end{align*}
		for all large \(t\).
	
		Fix $\xi = c_{*} - c_{N} t^{-1}$,
				\begin{align*}
				\bar{u}_{t}
				&= q_{\mu}(\mu(\xi), \bar{k}(t)-r) \mu^{\prime}(\xi) c_{N} t^{-2}
				+ q_{r}(\mu(\xi), \bar{k}(t)) \bar{k}^{\prime}(t)
				- 2 t^{-3} \log t + t^{-3} \\
				&= O\left(t^{-2}\right)- q_{r} \cdot \left(-c_{*} + c_{N} t^{-1} - B t^{-2} \log t + B t^{-2}\right),
			\end{align*}
				Hence, direct calculation gives
			\begin{align*}
			&\quad\quad	\bar{u}_{t} - d\bar{u}_{rr} - \frac{d(N-1)}{r} \bar{u}_{r} - f(\bar{u})\\
				&= O\left(t^{-2}\right) - q_{r}\left[-c_{*} + c_{N} t^{-1} - B t^{-2} \log t + B t^{-2} - \frac{d(N-1)}{r}\right] 
			 - dq_{rr} - f\left(q + t^{-2} \log t\right) \\
				&= O\left(t^{-2}\right) - q_{r}\left[g(\xi) - g(c_{*}) + c_{N} t^{-1} - B t^{-2} \log t + B t^{-2} - \frac{d(N-1)}{r}\right] 
				 + g(\xi) q_{r} - dq_{rr} - f\left(q + t^{-2} \log t\right) \\
				&= O\left(t^{-2}\right) -q_{r} \hat{K} + f(q) - f\left(q + t^{-2} \log t\right).
			\end{align*}
		with	
	\[
	\hat{K} := g(\xi) - g(c_*) + c_N t^{-1} - B t^{-2} \log t + B t^{-2} - \frac{d(N-1)}{r}.
	\]
	
	For \( r \in [1, \bar{h}(t)] \), we have
	\[
	\frac{d(N-1)}{r} \geq \frac{d(N-1)}{\bar{h}(t)} = \frac{d(N-1)}{\bar{k}(t) + o(t^{-1})}
	= \frac{d(N-1)}{c_* t} + \frac{d(N-1)c_N \log t}{c_*^{2} t^2} [1 + o(1)].
	\]
	Therefore, by \eqref{cng} and the definition of $c_N$, we have
		\begin{align*}
		\hat{K} &\leq -g'(c_*) c_N t^{-1} + c_N t^{-1} - \frac{d(N-1)}{c_*} t^{-1}
	 - \left[ B + \frac{d(N-1)c_N}{c_*^{2}} \right] t^{-2} \log t + o(t^{-2} \log t) \\
		&= -\left[ B + \frac{d(N-1)c_N}{c_*^{2}} + o(1) \right] t^{-2} \log t < 0
	\end{align*}
	for all large \( t \).
	
	We now fix \( \epsilon_0 > 0 \) small so that \( f'(u) \leq -\sigma_2 < 0 \) for \( u \in [1 - 2\epsilon_0, 1 + 2\epsilon_0] \). Then for \( q(\mu(\xi),  \bar{k}(t)-r) \in [1 - \epsilon_0, 1] \) we have
	\[
	f(q) - f(q + t^{-2} \log t) \geq \sigma_2 t^{-2} \log t,
	\]
	for all large $t$.	We have
	\[
	O(t^{-2}) -q_r \hat{K} + f(q) - f(q + t^{-2} \log t)
	\geq O(t^{-2}) + \sigma_2 t^{-2} \log t > 0
	\]
	for all large \(t\).
	If \(q(\mu(\xi), \bar{k}(t)-r) \in [0, 1 - \epsilon_0]\), then we can find \(\sigma_1 > 0\) such that \(q_r \geq \sigma_1\), and hence
	\[
-q_r \hat{K} \geq \sigma_1 \left[ B + \frac{d(N - 1)c_N}{c_{*}^2} + o(1) \right] t^{-2} \log t.
	\]
		On the other hand, there exists \(\sigma_2 > 0\) such that
	\[
	f(q) - f(q + t^{-2} \log t) \geq -\sigma_2 t^{-2} \log t.
	\]
	Thus in this case we have
	\begin{align*}
		& O(t^{-2}) -q_r \hat{K} + f(q) - f(q + t^{-2} \log t) \\
		&\geq \sigma_1 \left[ B + \frac{d(N - 1)c_N}{c_{*}^2} + o(1) \right] t^{-2} \log t - \sigma_2 t^{-2} \log t + O(t^{-2}) \\
		&> 0
	\end{align*}
	for all large \(t\), provided that \(B\) is large enough.

Choose \[
C = h(T) - c_*T + c_N \log T + 2T,\]	
	then
\[
\bar{h}(T)=\bar{k}(T)+o\left(T^{-1}\right)=h(T)-B T^{-1} \log T+2 T+o\left(T^{-1}\right)>h(T)+T
\]
for \(T\) large enough.
For \(r \in [1, h(T)]\),
\begin{align*}
	\bar{u}(T, r) 
	& \geq \bar{u}(T, h(T)) = q\left(\mu\left(c_{*}-c_{N} T^{-1}\right), \bar{k}(T)-h(T)\right)+T^{-2} \log T \\
	& \geq q\left(\mu\left(c_{*}-c_{N} T^{-1}\right),T\right)+T^{-2} \log T \\
	& \geq 1-M^* e^{-\sigma^* T}+T^{-2} \log T \\
	& > 1+T^{-2}\geq  1+M_* e^{-\sigma T}\geq u(T, r).
\end{align*}
provided that \( T \) is large enough.  The proof of the lemma is now complete.

	\end{proof}

\subsection{Convergence}	
In this subsection, we complete the proof of Theorem \ref{th1.4}. Our approach is motivated by the ideas in \cite{DN,DHZ} and incorporates appropriate modifications.

		Again we will prove this theorem by a series of lemmas. By Lemmas \ref{le4.6} and \ref{le4.7} we know that there exist \( C, T > 0 \) such that
		\[
		-C \leq h(t) - \left[ c_* t - c_N \log t \right] \leq C \quad \text{for } t \geq T.
		\]
	For $t \geq T$,	we denote
	\[
	\left\{
	\begin{aligned}
		&k(t) = c_* t - c_N \log t - 2C,\quad
		l(t) = h(t) - k(t),\\
		&\phi(t,r) = u(t,r+k(t)),
	\end{aligned}
	\right.
	\]
		Clearly
		\[
		C \leq l(t) \leq 3C \quad \text{for } t \geq T.
		\]
		Moreover,
		\[
		u_r = \phi_r, \quad u_{rr} = \phi_{rr}, \quad u_t = \phi_t - \left( c_* - c_N t^{-1} \right) \phi_r,
		\]
		and \((\phi, l)\) satisfies
		\[
		\begin{cases} 
			\phi_t - d\phi_{rr} - \left[ c_* - c_N t^{-1} + \frac{d(N-1)}{r+k(t)} \right] \phi_r = f(\phi), & -k(t) \leq r < l(t), \quad t > T, \\[4pt]
		\phi(t, l(t)) = \delta, \quad l'(t) = -\frac{d}{\delta} \phi_r(t, l(t)) - c_* + c_N t^{-1}, & t > T.
		\end{cases}
		\]
			Let \( t_n \to \infty \)  satisfying 	\( t_n > T \) for every \( n \geq 1 \) and 
			\[\lim_{n \to \infty} l(t_n)=\liminf_{t \to \infty} l(t)\in[C,3C].\]
	Define
	\[
	k_n(t) = k(t + t_n), \quad \phi_n(t, r) = \phi(t + t_n, r), \quad l_n(t) = l(t + t_n).
	\]

\begin{lemma}\label{le4.8}
Subject to a subsequence, we have
	\[
	\lim_{n\to\infty} l_n = L \quad \text{in } C_{\text{loc}}^{1+\frac{\alpha}{2}}(\mathbb{R}),
\ \mbox{ and } \
	\lim_{n\to\infty} \|\phi_n - \Phi\|_{C_{\text{loc}}^{\frac{1+\alpha}{2},\, 1+\alpha}(\mathbb{R}\times (-\infty, L(t)))} = 0,
	\]
	where \(\alpha \in (0,1)\), and \((\Phi(t,r), L(t))\) satisfy 	
\begin{equation}\label{04.19}
		\begin{cases}
		\Phi_t - d \Phi_{rr} - c_* \Phi_r = f(\Phi),\quad \Phi(t,r)>\delta, & t\in\mathbb{R},\ -\infty < r < L(t), \\[4pt]
		\Phi(t, L(t)) = \delta, \quad L'(t) = -\frac{d}{\delta} \Phi_r(t, L(t)) - c_*, & t \in \mathbb{R}.
	\end{cases}
\end{equation}
	Moreover, \(L(t) \equiv L_0 \in [C, 3C]\), \(\Phi(t, r) = q_{c_*}(L_0 - r)\), and 
\begin{equation}
		\lim_{n \to \infty} \bigl\| u(t_n, \cdot) - q_{c_*}(h(t_n) - \cdot) \bigr\|_{L^\infty([0, h(t_n)])} = 0,
	\qquad
	\lim_{n \to \infty} h'(t+t_n) = c_*.
\end{equation}
\end{lemma}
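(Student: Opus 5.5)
The plan has two parts: first a compactness argument in the moving frame, then a Liouville-type classification of entire solutions of \eqref{04.19}. This follows the strategy of \cite{DN,DHZ}.

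\textbf{Compactness.} Lemma \ref{le2.6} gives $|h'|\le C_2$, so $|l_n'|\le C_2+c_*$. Lemma \ref{le2.4} gives $C_0\le u\le C_1$, and Lemmas \ref{le4.6}--\ref{le4.7} give $C\le l_n\le 3C$. Flattening the free boundary via $s=r-l_n(t)$ turns the problem for $\phi_n$ into a problem on a fixed half-line $s<0$ (locally $s\in[-R,0]$). Its drift coefficient $c_*-c_Nt^{-1}+d(N-1)/(r+k_n(t))$ is bounded, and it converges to $c_*$ locally uniformly because $k_n(t)\to\infty$. Interior and boundary $L^p$ estimates and Sobolev embedding then yield uniform $C^{(1+\alpha)/2,1+\alpha}$ bounds on compact sets. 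By the Stefan condition, these also bound $l_n$ in $C^{1+\alpha/2}_{\rm loc}$. A diagonal subsequence gives the limit $(\Phi,L)$ solving \eqref{04.19}. To get $\Phi>\delta$, use Lemma \ref{nle3.1}, which gives $u\ge\delta$, together with the strong maximum principle, since $\delta$ is a strict subsolution.

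\textbf{Bounds on $\Phi$.} Lemma \ref{le4.4} gives $u\le 1+M_*e^{-\sigma t}$, so $\Phi\le1$. The lower solution from Lemma \ref{le4.6}, i.e. \eqref{4.17} after a time shift, gives $\Phi(t,r)\ge q_{c_*}(\underline L-r)$ for a constant $\underline L$. The upper solution from Lemma \ref{le4.7} gives $\Phi(t,r)\le q_{c_*}(\overline L-r)$. Both bounds pass to the limit because $\mu(c_*-c_Nt^{-1})\to\mu_0$ and the $t^{-2}\log t$ corrections vanish. Consequently $\Phi\to1$ as $r\to-\infty$ uniformly in $t$, and $C\le L\le3C$.

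\textbf{Liouville step (main obstacle).} We must show that every entire solution of \eqref{04.19} trapped between two translates of $q_{c_*}$ is itself a translate. I would use a sliding/sweeping argument in the spirit of \cite{DN}. Set
\[
Z^*:=\inf\{Z:\ \Phi(t,r)\le q_{c_*}(Z-r),\ L(t)\le Z\ \ \forall t\in\mathbb R,\ r<L(t)\},
\]
which is finite by the previous step. Suppose $Z^*>\inf_t L(t)$. Take $s_n\to-\infty$ along which the gap is nearly attained, and translate $\Phi$ in time by $s_n$. The limit $\tilde\Phi$ touches $q_{c_*}(Z^*-\cdot)$ at some finite point, either in the interior or at the free boundary. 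Near $r=-\infty$ there is no contact, because $f'(1)<0$ gives exponential stability there. The strong maximum principle, or the Hopf lemma combined with the free boundary comparison (Lemma \ref{le2.2} in the moving frame), then forces $\tilde\Phi\equiv q_{c_*}(Z^*-\cdot)$ and $\tilde L\equiv Z^*$.

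Alternatively, one can exploit the choice of $t_n$: $L(0)=\liminf l=\min L$. At $t=0$ we have $L'(0)=0$, hence $\Phi_r(0,L(0))=-c_*\delta/d$, which equals the slope of $-q_{c_*}$ at $0$. Compare $\Phi$ with $q_{c_*}(L(0)-r)$. At $t=0$ the two agree at the boundary point, with equal slopes there, and $L(t)\ge L(0)$ for all $t$. Lemma \ref{0} applied to $\eta=\Phi-q_{c_*}(L(0)-\cdot)$ on the common domain then shows that a degenerate zero at the boundary must be excluded unless $\eta\equiv0$. The main delicacy is justifying the zero-number lemma on the unbounded interval, using the exponential approach to $1$ as $r\to-\infty$. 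I expect this step to be the principal difficulty.

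\textbf{Conclusion.} Once $\Phi\equiv q_{c_*}(L_0-r)$ with $L\equiv L_0\in[C,3C]$, the $C^{1+\alpha/2}_{\rm loc}$ convergence of $l_n$ gives $l_n'\to0$, hence $h'(t+t_n)\to c_*$. For the uniform convergence on $[0,h(t_n)]$, split the interval. On $[h(t_n)-R,h(t_n)]$, use the local convergence of $\phi_n$. On $[\hat ct_n,h(t_n)-R]$, use the two-sided bounds $q_{c_*}(\underline L-r)\le\cdot\le 1+M_*e^{-\sigma t}$ together with $q_{c_*}(R)\approx1$. On $[0,\hat ct_n]$, use Lemma \ref{le4.4}. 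Letting $R\to\infty$ completes the proof.
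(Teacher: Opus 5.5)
Your compactness step, the two-sided bounds on $\Phi$, and the final splitting argument are fine. Translating by $l_n(t)$ instead of the paper's scaling $s=r/l_n(t)$ is, if anything, cleaner. The gap is in the Liouville step, and neither of your alternatives closes it.

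In the first alternative, assuming $Z^*>\inf_t L(t)$ and producing a time-translated limit with $\tilde\Phi\equiv q_{c_*}(Z^*-\cdot)$, $\tilde L\equiv Z^*$ is not a contradiction. A limit of time translates only satisfies $\inf L\le\tilde L\le\sup L$. So sweeping from above can at best give $Z^*=\sup_t L(t)$, which need not be attained for your $t_n$ (chosen along $\liminf l$). Moreover, an identity for $\tilde\Phi$ says nothing about $\Phi$ itself.

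In the second alternative, Lemma~\ref{0} cannot rule out a degenerate zero occurring at the single time $t=0$. Part (2) only says $\mathcal Z$ drops strictly across $t=0$. That is contradictory only if $\mathcal Z$ is known to be constant, or if degenerate zeros recur infinitely often, as in the $\delta=1$ argument. The lemma's hypotheses also fail here. Since $\eta(t,L(0))=\Phi(t,L(0))-\delta\ge0$ vanishes exactly when $L(t)=L(0)$, the line $r=L(0)$ is neither a zero curve nor zero-free. Finiteness of $\mathcal Z$ on $(-\infty,L(0)]$ is also not available.

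The missing ingredient is a one-sided ordering, which the paper obtains (via \cite{DN}) by sweeping from \emph{below}.
Let $R^*$ be the supremum of all $R$ with $\Phi(t,r)\ge q_{c_*}(R-r)$ for all $t\in\mathbb R$ and $r<\min\{R,L(t)\}$. It is finite by the Lemma~\ref{le4.6} bound, and $R^*\le L(0)=\min_t L(t)$ because $\Phi(0,L(0))=\delta$.

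Suppose $R^*<L(0)$. Take nearly touching points, keep them at bounded $r$ using $f'(1)<0$, and pass to a time-translated limit $\tilde\Phi$. This limit touches $q_{c_*}(R^*-\cdot)$ at some $r\le R^*$. The crucial point is that $\tilde L\ge\min L=L(0)>R^*$ \emph{does} survive translation. So the strong maximum principle gives $\tilde\Phi=\delta$ at an interior point, contradicting $\tilde\Phi>\delta$.

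Hence $\eta:=\Phi-q_{c_*}(L(0)-\cdot)\ge0$ on the fixed set $\{r\le L(0)\}$, which lies in the domain of $\Phi$ because $L\ge L(0)$. Your observation $L'(0)=0$ then gives $\eta(0,L(0))=\eta_r(0,L(0))=0$. The Hopf lemma (not the zero number) now forces $\eta\equiv0$ for $t\le0$. Therefore $L\equiv L(0)$, and by forward uniqueness $\Phi\equiv q_{c_*}(L(0)-\cdot)$ for all $t$. A sweep from above would work symmetrically only if $t_n$ were chosen along $\limsup l$.
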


\begin{proof}
	By Lemma \ref{le2.6}, there exists $C_0$ such that $|h'(t)|\leq C_0$ for $t\geq 0$. Then there exists $\tilde{C}_0>0$ such that
	\[|l_n'(t)|\leq \tilde{C}_0\ \mbox{ for }\ t\geq T.\]
	
	Define
	\[
	s = \frac{r}{l_n(t)}, \qquad w_n(t, s) = \phi_n(t, r).
	\]
		Then \((w_n(t, s), l_n(t))\) satisfies
\begin{equation}\label{4.20}
		\left\{
	\begin{array}{ll}
		\displaystyle
		(w_n)_t - d\frac{(w_n)_{ss}}{l_n(t)^2}
		-\hat{C}_n
		\frac{(w_n)_s}{l_n(t)}
		= f(w_n),
		&
		-\frac{k_n(t)}{l_n(t)}\le s<1,\quad t>T-t_n,\\[6pt]
		w_n(t,1)=0, & t>T-t_n,\\[4pt]
		l_n'(t)=-\mu_0\frac{(w_n)_s(t,1)}{l_n(t)}
		-c_*+c_N(t+t_n)^{-1},
		& t>T-t_n.
	\end{array}
	\right.
\end{equation}
	where $\hat{C}_n=\left[ s l_n'(t)+c_*-c_N(t+t_n)^{-1}
	+\frac{d(N-1)}{l_n(t)s+k_n(t)}\right]$.
	For any given \(M > 0\) and \(T_0 \in \mathbb{R}\), using the  interior-boundary \(L^p\) estimates  over \([T_0 - 1, T_0 + 1] \times [-M, 1]\), we obtain, for any \(p > 1\),
	\[
	\|w_n\|_{W^{1,2}_p([T_0, T_0 + 1] \times [-M, 1])} \leq C_R \quad \text{for all large } n,
	\]
	where \(C_R\) is a constant depending on \(M\) and \(p\) but independent of \(n\) and \(T_0\). Therefore, for any \(\gamma \in (0, 1)\), we can choose \(p > 1\) large enough and use the Sobolev embedding theorem  to obtain
\begin{equation}\label{4.21}
		\|w_n\|_{C^{1+\gamma, 1+\gamma}_2([T_0, \infty) \times [-M, 1])} \leq \tilde{C}_R \quad \text{for all large } n, 
\end{equation}
		where \(\tilde{C}_R\) is a constant depending on \(M\) and \(\gamma\) but independent of \(n\) and \(T_0\).
	
	From \eqref{4.20} and \eqref{4.21} we deduce
		\[
	\|L_n\|_{C^{1+\gamma, 1+\gamma}_2([T_0, \infty))} \leq \hat{M} \quad \text{for all large } n,
	\]
	with \(\hat{M} \) a constant independent of \(T_0\) and \(n\). Hence by passing to a subsequence we may assume that, as \(n \to \infty\),
		\[
	w_n \to W \quad \text{in } C^{1+\gamma, 1+\gamma}_{loc}(\mathbb{R} \times (-\infty, 1]), \qquad
	L_n \to L \quad \text{in } C^{1+\gamma}_{loc}(\mathbb{R}),
	\]
		where \(\alpha \in (0, \gamma)\). Moreover,  we find that \((W, G)\) satisfies 
		\[
	\begin{cases} 
		W_t - d\dfrac{W_{ss}}{L(t)^2} - \bigl(sL'(t) + c_*\bigr) \dfrac{W_s}{L(t)} = f(W),\quad W(t,s)> \delta, & s \in (-\infty, 1), \quad t \in \mathbb{R}, \\[6pt]
		W(t, 1) = \delta, \quad L'(t) = -\mu_0 \dfrac{W_s(t, 1)}{L(t)} - c_*, & t \in \mathbb{R}.
	\end{cases}
	\]
	Define \( \Phi(t, r) = W\bigl(t, \frac{r}{L(t)}\bigr) \). We easily see that \((\Phi, L)\) satisfies \eqref{04.19} and
	\[
	\lim_{n \to \infty} \|\phi_n - \Phi\|_{C_{loc}^{\frac{1+\alpha}{2}, 1+\alpha}(\mathbb{R}\times (-\infty, L(t)))} = 0. 
	\]

	It follows from Lemma \ref{le4.6} that for $t\geq T$ and $r\in[\underline{h}(t+t_n)-M \log (t+t_n)-k(t+t_n), \underline{h}(t+t_n)-k(t+t_n) ]$, we have
	\[u(t+t_n,r+k(t+t_n))\geq  q\bigl(\mu(c_* - c_N (t+t_n)^{-1}),\;  \underline{k}(t+t_n)-k(t+t_n)-r\bigr) - (t+t_n)^{-2} \log (t+t_n),\]
There exists $R^0$ such that $\underline{h}(t+t_n)-k(t+t_n)\geq R^0$ for all $t\geq T$.	
Therefore for $x\leq R^0\leq L(t)$ and $t\in\mathbb{R}$, letting $n\to\infty$, we have
\begin{equation}\label{p}
	\Phi(t,r)\geq q(\mu(c_*), R^0-r)=q_*(R^0-r)\ \mbox{ for }\ t\in\mathbb{R}, x\leq R^0.
\end{equation}
Now we define
\[
R^* := \sup \bigl\{ R \in\mathbb{R} : \Phi(t, x) \geq q_*(R - x ) \text{ for } (t, x) \in \mathbb{R} \times (-\infty, R(t)) \bigr\}.
\]
where $R(t):=\min \{R, L(t)\}$.
Thanks to \eqref{p} and \(\Phi(t, L(t)) = \delta\) with \(L(t) \in [C, 3C]\), we see that \(R^*\) is finite. Moreover,
\[
\Phi(t, x) \geq q_*(R^* - x ) \text{ for } (t, x) \in \mathbb{R} \times (-\infty, R^*]
\ \mbox{ and }\
\min_{t \in \mathbb{R}} L(t) = L(0) \geq R^*.
\]	
	
As in \cite{DN} lemma 3.7 and proposition 3.8, we can obtain 	
	\[L(t)\equiv L(0)=R^* \ \mbox{ and }\ \Phi(t,r)\equiv q_*(L(0)-r).\]

Since \( h(t+t_n) - k(t+t_n) \to L(0) = R^* \) in \( C_{\mathrm{loc}}^{1+\alpha}(\mathbb{R}) \), then 
 \( h'(t+t_n) \to c_* \) in \( C_{\mathrm{loc}}^{\frac{\alpha}{2}}(\mathbb{R}) \) and
\[
u(t+t_n, r+h(t+t_n)) \to q_*(-r) \text{ in } C_{\mathrm{loc}}^{\frac{1+\alpha}{2}, 1+\alpha}(\mathbb{R} \times (-\infty, 0]) \text{ as } n \to \infty.
\]
Hence, for any \( L_0 > 0 \),  
\[
\lim_{n \to \infty} \|u(t_n, \cdot) - q_*(h(t_n)  - \cdot)\|_{L^\infty([h(t_n)-L_0, h(t_n)])} = 0.
\]
On the other hand, for any given small \( \epsilon > 0 \),  there exist \( L_1 > 0 \) and some large positive integer \( N \) such that  
\[
1 - \epsilon \leq u(t_n, r) \leq 1 + \epsilon \text{ for } r \in [0, h(t_n) - L_1], \quad n \geq N.
\]
Clearly for \( L_2 > 0 \) large,  
\[
1 - \epsilon \leq q_*(h(t_n) - r ) \leq 1 \text{ for } r \in (-\infty, h(t_n) - L_2].
\]
Therefore, if we take \( L_0 = \max\{L_1, L_2\} \), then for \( n \geq N \),  
\[
\|u(t_n, \cdot) - q_*(h(t_n)  - \cdot)\|_{L^\infty([0, h(t_n)-L_0])} \leq 2\epsilon.
\]
Thus we have  
\[
\lim_{n \to \infty} \|u(t_n, \cdot) - q_*(h(t_n)  - \cdot)\|_{L^\infty([0, h(t_n)])} = 0.
\]

\end{proof}

	\begin{lemma}
		There exists \(\hat{h} \in \mathbb{R}^1\) such that
		\[
		\lim_{t \to \infty} \left[ h(t) - c_* t + c_N \log t \right] = \hat{h}=R^*-2C.
		\]
	\end{lemma}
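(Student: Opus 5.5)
Since $h(t)-c_*t+c_N\log t = l(t)-2C$, the claim is exactly $\lim_{t\to\infty} l(t)=R^*$. By the choice of $t_n$ in the definition of the subsequence, Lemma \ref{le4.8} already gives $\liminf_{t\to\infty} l(t)=\lim_n l(t_n)=L_0$. The limiting pair is $L\equiv L_0$, $\Phi=q_*(L_0-\cdot)$, so $R^*=L_0$. It therefore remains to prove $\limsup_{t\to\infty} l(t)\le L_0$.

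To do this, I fix a small $\varepsilon>0$ and build an upper solution, modeled on the one in Lemma \ref{le4.7}, started at time $t_n$ for large $n$. I would take
\[
\bar k_n(t)=c_*t-c_N\log t+B\bigl(t_n^{-1}\log t_n - t^{-1}\log t\bigr)+K_n,
\]
with $K_n$ chosen so that $\bar k_n(t_n)=h(t_n)+\varepsilon$. The shift term $Bt_n^{-1}\log t_n\to0$. I would also perturb the profile with a decaying amplitude,
\[
\bar u_n(t,r)=q\bigl(\mu(c_*-c_Nt^{-1}),\bar k_n(t)-r\bigr)+\varepsilon e^{-\sigma(t-t_n)}+t^{-2}\log t,
\]
and compensate the amplitude in the boundary by adding $\beta\varepsilon(1-e^{-\sigma(t-t_n)})$ to $\bar k_n$, exactly as in Step 3 of Lemma \ref{le4.4}. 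The differential inequality and the free boundary inequality are checked verbatim as in Lemma \ref{le4.7}. The $1/r$ term is again absorbed through \eqref{cng} and the choice of $c_N$. The exponential perturbation is handled near $q\approx1$ by $f'(1)<0$, and elsewhere by $\beta$ large times $q_r$ bounded below.

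The initial ordering at $t=t_n$ comes from Lemma \ref{le4.8}. There $u(t_n,\cdot)-q_*(h(t_n)-\cdot)\to0$ uniformly on $[0,h(t_n)]$, and shifting the profile by $\varepsilon$ gives a strict margin near the front. The additive $\varepsilon$ covers the interior region, and the inner-boundary condition at $r=1$ is handled as in Lemma \ref{le4.7} via Lemma \ref{le4.4}. Lemma \ref{le2.2} then gives
\[
h(t)\le \bar h_n(t)\le \bar k_n(t)+o(1),
\]
and hence $l(t)\le l(t_n)+\varepsilon+C'\varepsilon+o_n(1)$ for all $t\ge t_n$. Letting $n\to\infty$ and then $\varepsilon\to0$ gives $\limsup l\le L_0$, so $l(t)\to L_0=R^*$ and $\hat h=R^*-2C$.

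The main obstacle is ensuring the accumulated boundary drift stays $O(\varepsilon)$ uniformly in $t\ge t_n$. This rests on two facts: the compensating shift $\beta\varepsilon\int_{t_n}^\infty \sigma e^{-\sigma(s-t_n)}\,ds$ is bounded by $\beta\varepsilon$, and the $B$-correction is controlled by $B t_n^{-1}\log t_n$. Both are uniform, but the constants $\beta$ and $B$ must be chosen independently of $n$ and $\varepsilon$, and I expect this bookkeeping to be the delicate part.
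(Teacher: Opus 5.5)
Your proposal matches the paper's proof: launch an upper solution at $t_n$ with front shifted by $\varepsilon$ and amplitude perturbation $\varepsilon e^{-\sigma(t-t_n)}$, compensate the amplitude by a boundary shift $\beta\varepsilon(1-e^{-\sigma(t-t_n)})$, get the initial ordering from the uniform convergence in Lemma \ref{le4.8}, and conclude $\limsup\xi\le\hat h+(\beta+1)\varepsilon$ with $\beta$ independent of $n$ and $\varepsilon$. The differences are cosmetic: you keep Lemma \ref{le4.7}'s $t^{-2}\log t$ and $-Bt^{-1}\log t$ corrections and compare on $[1,\bar h_n(t)]$, whereas the paper drops them and compares on $[0,\bar h_n(t)]$ using $(\bar u_n)_r(t,0)<0=u_r(t,0)$; your variant has the small advantage of absorbing the $O(t^{-2})$ term $q_\mu\mu'c_Nt^{-2}$ without needing its sign.
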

	
	\begin{proof}
		Set
		\[\xi(t)= h(t) - c_* t + c_N \log t \mbox{ and }
		\hat{h} = \liminf_{t \to \infty} \xi(t).
		\]
		We will show that for any given small \(\epsilon > 0\),
		\begin{equation}\label{last}
			\limsup_{t \to \infty} \xi(t) \leq \hat{h} + \epsilon.
		\end{equation}
	Then, we obtain the required conclusion.

	 Let \( t_n \to \infty \) be chosen in Lemma \ref{le4.8} such that \( \xi(t_n) \to \hat{h} \) as \( n \to \infty \).
		Define
	\[
	\begin{cases}
		\bar{k}_n(t) = c_* (t + t_n) - c_N \log(t + t_n)
		+ B\epsilon(1-e^{-\alpha t}) + \hat{h} + \epsilon, & t \ge 0,\\
		\bar{u}_n(t,r) = q\left(\mu\left(c_* - c_N (t+t_n)^{-1}\right), \bar{k}_n(t)-r\right)
		+ \epsilon e^{-\alpha t}, & r \in \left[0, \bar{k}_n(t)+\epsilon\right].
	\end{cases}
	\]
where $\epsilon\in(0,\epsilon_0 )$ with $\epsilon_0>0$ satisfying $q_r(\mu,r)>0$ for $x\in[-\epsilon_0, \infty)$ and $\mu\in[\mu_0/2,\mu_0]$, \( \alpha \) and \( B \) will be determined later.
			Denote \( \zeta = c_* - c_N(t + t_n)^{-1} \), then for $t>0$ and small	\( \epsilon > 0 \) 
		\[
		\begin{cases}
			(\bar{u}_n)_r(t, r) = -q_r(\mu(\zeta), \bar{k}_n(t)-r) < 0, & r \in [0, \bar{k}_n(t)+\epsilon],\\
			\bar{u}_n(t, \bar{k}_n(t)) = q(\mu(\zeta), 0) + \epsilon e^{-\alpha t} > \delta,\\
			\bar{u}_n(t, \bar{k}_n(t)+\epsilon) = q(\mu(\zeta), -\epsilon) + \epsilon e^{-\alpha t} < \delta.
		\end{cases}
		\]
		 Hence, there exists a unique \(  \bar{h}_n(t) \in (\bar{k}_n(t), \bar{k}_n(t) + \epsilon) \) such that	$
			\bar{u}_n(t, \bar{h}_n(t)) = \delta$ for $t>0$.
			Moreover,  we can apply the implicit function theorem to conclude that \( t \to \bar{h}_n(t) \) is a smooth function.
			
			By the mean value theorem we have that for $t>0$
				\[
			\bar{u}_n(t, \bar{h}_n(t)) - \bar{u}_n(t, \bar{k}_n(t)) = [-q_r(\mu_0, 0) + o_{\epsilon, n}(1)][\bar{h}_n(t) - \bar{k}_n(t)] = -\epsilon e^{-\alpha t},
			\]
				where \( o_{\epsilon, n}(1) \to 0 \) as \( \epsilon \to 0 \) and \( n \to \infty \), uniformly in \( t > 0 \). 	Combining this with the fact that \( \frac{d}{dt} \bar{u}_n(t, \bar{h}_n(t)) = 0 \), we deduce
		for $t>0$,
			\[
			\left\{
			\begin{array}{l}
				\bar{h}_n(t) - \bar{k}_n(t) = \left[ \frac{\mu_0}{c_*} + o_{\epsilon, n}(1) \right] \epsilon e^{-\alpha t} . \\
				q_{\mu} \cdot \mu' \cdot c_N(t + t_n)^{-2}- q_r \cdot [\bar{h}_n'(t) - \bar{k}_n'(t)] - \alpha \epsilon e^{-\alpha t} = 0.
			\end{array}
			\right.
			\]
			
				For all large \( n \) and small \( \epsilon \), we have for $t>0$,
			\begin{align*}
				(\bar{u}_n)_r(t, \bar{h}_n(t)) 
				&= -q_r(\mu(\zeta), \bar{k}_n(t)-\bar{h}_n(t) ) \\
				&= -q_r(\mu(\zeta), 0) + [q_{rr}(\mu_0, 0) + o_{\epsilon, n}(1)][\bar{h}_n(t) - \bar{k}_n(t)] \\
				&> -\frac{1}{\mu_0} [c_* - c_N(t + t_n)^{-1}] +[\frac{c_*}{d}-\frac{\mu_0f(\delta)}{dc_*}+o_{\epsilon, n}(1)]\epsilon e^{-\alpha t},
			\end{align*}	
			where we have used the condition \( q_{rr}(\mu_0, 0) = \frac{c_* }{d}q_r(\mu_0, 0)-\frac{f(\delta)}{d} = \frac{(c_*)^2}{d\mu_0}-\frac{f(\delta)}{d}  \).

				Since \( q_{\mu} \cdot \mu' > 0 \), it follows that for $t>0$,
			\begin{align*}
				\bar{h}_n'(t) 
				&> \bar{k}_n'(t) - [q_r]^{-1} \alpha \epsilon e^{-\alpha t} \\
				&= c_* - c_N(t + t_n)^{-1} + \alpha B \epsilon e^{-\alpha t} 
				- \left[ \frac{\mu_0}{c_*} + o_{\epsilon, n}(1) \right] \alpha \epsilon e^{-\alpha t} \\
				&= c_* - c_N(t + t_n)^{-1} 
				+ \left[ B - \frac{\mu_0}{c_*} + o_{\epsilon, n}(1) \right] \alpha \epsilon e^{-\alpha t}.
			\end{align*}
		Therefore if we choose \( B > \frac{\mu_0}{c_*}+\frac{\mu_0^2f(\delta)}{dc_*\alpha} \), then for all large \( n \) and small \( \epsilon \),
				\begin{equation}\label{h'}
				\bar{h}_n'(t) > -\mu_0 (\bar{u}_n)_r(t, \bar{h}_n(t)) \quad \mbox{ for }  t>0.
		\end{equation}

			Next we prove that by choosing \( \alpha \) suitably small and enlarging \( B \) accordingly
			\begin{equation}\label{dy}
				(\bar{u}_n)_t - d(\bar{u}_n)_{rr} - \frac{d(N-1)}{r} (\bar{u}_n)_r - f(\bar{u}_n) > 0 \quad \text{for } t > 0, \quad r \in (0, \bar{h}_n(t)) 
		\end{equation}
			and all large \( n \) and small \( \epsilon \).
		By $q_\mu \cdot \mu' >0$, we have
				\[
		(\bar{u}_n)_t = q_\mu \cdot \mu' \cdot c_N(t + t_n)^{-2} + q_r \cdot \bar{k}'_n(t) - \epsilon \alpha e^{-\alpha t}
			> q_r \left[ c_* - c_N(t + t_n)^{-1} + B \epsilon \alpha e^{-\alpha t} \right] - \epsilon \alpha e^{-\alpha t}.
			\]
			Hence
			\begin{align*}
				&\quad\quad(\bar{u}_n)_t - d(\bar{u}_n)_{rr} - \frac{d(N-1)}{r} (\bar{u}_n)_r - f(\bar{u}_n)\\
				&> q_r \left[ c_* - c_N(t + t_n)^{-1} + B \epsilon \alpha e^{-\alpha t} + \frac{d(N-1)}{r} \right] - dq_{rr} - f(q + \epsilon e^{-\alpha t}) - \epsilon \alpha e^{-\alpha t} \\
				&= q_r J_n + f(q) - f(q + \epsilon e^{-\alpha t}) - \epsilon \alpha e^{-\alpha t},
			\end{align*}
			where
				\[
		J_n := c_* - g(c_* - c_N(t + t_n)^{-1}) - c_N(t + t_n)^{-1} + B \epsilon \alpha e^{-\alpha t} + \frac{d(N-1)}{r}.
			\]
			
			For \( r \in (0, \bar{h}_n(t)) \), we have
		\begin{align*}
			\frac{d(N-1)}{r} \geq  \frac{d(N-1)}{\bar{h}_n(t)} = \frac{d(N-1)}{\bar{k}_n(t) + o_{\epsilon,n}(1)} 
			 &= \frac{d(N-1)}{c_*(t + t_n) - c_N \log(t + t_n) + \hat{h} + o_{\epsilon,n}(1)} \\
			& = \frac{d(N-1)}{c_*(t + t_n)} + \frac{d(N-1)c_N \log(t + t_n)}{c_*^2(t + t_n)^2} \left[ 1 + o_{\epsilon,n}(1) \right].
		\end{align*}
			Moreover,
				\[
			c_* - g(c_* - c_N(t + t_n)^{-1}) = g'(c_*) c_N(t + t_n)^{-1} + O_n[(t + t_n)^{-2}].
			\]
				Therefore, for $t>0$
		\begin{align*}
			J_n &\geq \left\{ c_N \left[ g'(c_*) - 1 \right] + \frac{d(N-1)}{c_*} \right\} (t + t_n)^{-1}
			+ \frac{d(N-1)c_N \log(t + t_n)}{c_*^2(t + t_n)^2} \left[ 1 + o_{\epsilon,n}(1) \right]
			+ B \epsilon \alpha e^{-\alpha t} \nonumber \\
			&= \frac{d(N-1)c_N \log(t + t_n)}{c_*^2(t + t_n)^2} \left[ 1 + o_{\epsilon,n}(1) \right]
			+ B \epsilon \alpha e^{-\alpha t} \nonumber \\
			&> B \epsilon \alpha e^{-\alpha t} 
		\end{align*}
				for all large \( n \) and small \( \epsilon \).
			
			Choose \(\delta_0 > 0\) small so that \(f'(u) \leq -\sigma_0 < 0\) for \(u \in [1 - \delta_0, 1 + \delta_0]\). Then for \(q \in [1 - \delta_0, 1)\) we have
				\[
			q_r J_n + f(q) - f(q + \epsilon e^{-\alpha t}) - \epsilon \alpha e^{-\alpha t} \geq (\sigma_0 - \alpha) \epsilon e^{-\alpha t} > 0
			\]
			provided that we take \(\alpha = \sigma_0 / 2\).
			
			For \(q \in (0, 1 - \delta_0)\), there exists \(\sigma_1 > 0\) such that \(q_r \geq \sigma_1\); moreover, for all small \(\epsilon\),
			\begin{align*}
				q_r J_n + f(q) - f(q + \epsilon e^{-\alpha t}) - \epsilon \alpha e^{-\alpha t}
				&\geq \sigma_1 B \epsilon \alpha e^{-\alpha t} - (\sigma_2 + \alpha) \epsilon e^{-\alpha t} \nonumber \\
				&= (\sigma_1 B \alpha - \sigma_2 - \alpha) \epsilon e^{-\alpha t} > 0
			\end{align*}
			provided that \(\sigma_1 B \alpha > \sigma_2 + \alpha\),	where \(\sigma_2 = \max_{u \in [0,1]} |f'(u)|\). With \(\alpha = \sigma_0 / 2\), this is achieved by taking \(B \geq \frac{4 \sigma_2 + 2 \sigma_0}{\sigma_1 \sigma_0}\). This proves that \eqref{dy} holds for all large \(n\) and small \(\epsilon\).
			
			We show below that for all large \(n\) and small \(\epsilon\),
				\begin{equation}\label{cz}
				h(t_n) < \bar{k}_n(0), \quad u(t_n, r) \leq \bar{u}_n(0, r) \quad \text{for } r \in [0, h(t_n)].
		\end{equation}

			Since
			\[
			h(t_n) - \bar{k}_n(0) = \xi(t_n) - \hat{h} - \epsilon \to -\epsilon \quad \text{as } n \to \infty,
			\]
				we have, by the definition
			\[
			h(t_n) < \bar{k}_n(0) < \bar{h}_n(0)
			\]
				for all large \(n\), say \(n \geq n_1(\epsilon)\), and all small \(\epsilon\).
			
			By Lemma \ref{le4.8},
			\[
			\lim_{n \to \infty} \|u(t_n, \cdot) - q(\mu_0,  h(t_n)-\cdot)\|_{L^\infty([0, h(t_n)])} = 0.
			\]
			Since
			\[
			\mu(c_* - c_N t_n^{-1}) \to \mu_0, \quad h(t_n) - \bar{k}_n(0) + \epsilon \to 0 \quad \text{as } n \to \infty,
			\]
			we deduce
			\[
			\|u(t_n, \cdot) - q(\mu(c_* - c_N t_n^{-1}), -\cdot +\bar{k}_n(0) - \epsilon)\|_{L^\infty([0, h(t_n)])} \to 0 \quad \text{as } n \to \infty.
			\]	
			Therefore there exists \(n_2(\epsilon) \geq n_1(\epsilon)\) such that for \(n \geq n_2(\epsilon)\),
		\[
		\begin{aligned}
			u(t_n, r) 
			&\leq q\bigl(\mu(c_* - c_N t_n^{-1}),\, -r+\bar{k}_n(0) - \epsilon\bigr) + \epsilon \\
			&< q\bigl(\mu(c_* - c_N t_n^{-1}),\, -r+\bar{k}_n(0)\bigr) + \epsilon 
			= \bar{u}_n(0, r) \quad \mbox{ for }\  r \in [0, h(t_n)]).
		\end{aligned}
		\]
			Thus \eqref{cz} holds for all small \(\epsilon\) and \(n \geq n_2(\epsilon)\). By enlarging \(n_2(\epsilon)\) if necessary we may assume that \eqref{h'} and \eqref{dy} both hold for \(n \geq n_2(\epsilon)\) and all small \(\epsilon > 0\).
			
			In view of \eqref{h'}, \eqref{dy} and \eqref{cz} and the fact that \((\bar{u}_n)_r(t, 0) < 0\), \(u_r(t, 0) = 0\), we can use a standard comparison argument to conclude that	
			\[
			h(t + t_n) \leq \bar{h}_n(t), \quad u(t_n + t, r) \leq \bar{u}_n(t, r) \ \mbox{ for }\ t > 0,  r \in [0, h(t_n + t)]
			\]
				for all small \(\epsilon > 0\) and \(n \geq n_2(\epsilon)\). It follows that
			\begin{align*}
				\xi(t+t_n)
				&= h(t+t_n)-\bar{k}_n(t)+B\epsilon(1-e^{-\alpha t})+\hat h+\epsilon\\
				&= h(t+t_n)-\bar h_n(t)
				-\left[\frac{\mu_0}{c_*}+o_{\epsilon,n}(1)\right]\epsilon e^{-\alpha t}
				+B\epsilon(1-e^{-\alpha t})+\hat h+\epsilon\\
				&\le -\left[\frac{\mu_0}{c_*}+o_{\epsilon,n}(1)\right]\epsilon e^{-\alpha t}
				+B\epsilon(1-e^{-\alpha t})+\hat h+\epsilon\\
				&\to \hat h+(B+1)\epsilon \qquad (t\to\infty).
			\end{align*}
			Therefore
			\[
			\limsup_{t \to \infty} \xi(t) \leq \hat{h} + (B + 1)\epsilon,
			\]
			as we wanted. This completes the proof.

	\end{proof}
	
\section{Proof of Theorem \ref{th2.1}}
In this section, we complete the proof of Theorem \ref{th2.1}. The proof follows that of \cite{CDNZ}
with suitable modification.

		\begin{proof}
	We finish the proof	via a contraction mapping argument combined with an extension technique. 
		
		\noindent	$\textbf{Step\,1}.$ We straighten the free boundary $r=h(t)$.

		Define
	\[s=\frac{r}{h(t)} \ \mbox{ and }\ U(t, s) := u(t, r).\]
		There exists $T>0$ such that $h(t)>\frac{h_0}{2}$ for $t\in[0, T]$.	
	 Then for $t \in (0, T]$, system \eqref{1.1} is equivalent to
		\begin{equation}\label{52.1}
			\left\{
			\begin{array}{ll}
				U_{t} - \frac{d}{h^{2}(t)}  [U_{ss}+\frac{N-1}{s}U_s] -\frac{sh^{\prime}(t)}{h(t)}  U_{s} = f(U), & 0 < t \leq T, \, 0 < s < 1, \\
				U_s(t, 0) = 0, \quad U(t, 1) = \delta, & 0 < t \leq T, \\
				h'(t) = -\dfrac{d}{\delta} \dfrac{U_{s}(t, 1)}{h(t)}, & 0 < t \leq T, \\
				U(0, s) = u_{0}\left(h_{0} s\right), & 0 < s < 1,
			\end{array}
			\right.
		\end{equation}
	 System \eqref{52.1} is an initial-boundary value problem with fixed boundaries.	
		
		\noindent	$\textbf{Step\,2}.$ An extension technique.

		Define constants
		\[
		H := \max \{1, |h^{0}|\}, \quad 
		K := \max \left\{1, \| u_{0} \|_{C^{2}[0, h_{0}]} \right\}, \quad 
		T_{1} := \frac{h_{0}}{2(H + |h^{0}|)},
		\]
		where $h^{0} = -\frac{d}{\delta} u_{0}'(h_{0}) / h_{0}$. For $T \in (0, T_{1}]$ and the domain $D_T := [0, T] \times [0, 1]$, define the function spaces
		\[
		\begin{aligned}
			X_{1,T} &:= \left\{ U \in C(D_T) : U(0, s) = u_0(h_0 s),\ \| U - u_0(h_0 \cdot) \|_{C(D_T)} \leq K \right\}, \\
			X_{2,T} &:= \left\{ h \in C^{0,1}([0,T]) :  h(0) = h_0,\  h'(0) = h^{0}, \ \| h' - h^{0} \|_{L^{\infty}([0,T])} \leq H  \right\}.
		\end{aligned}
		\]
		The product space $X_T := X_{1,T} \times X_{2,T}$ forms a complete metric space under the metric
		\[
		d\left( (U_1, h_1), (U_2, h_2) \right) := \| U_1 - U_2 \|_{C(D_T)} + \| h'_1 - h'_2 \|_{L^{\infty}([0,T])}.
		\]
		For $0 < T < T_1$, define the subspace $X_{T_1}^T := X_{1,T_1}^T \times X_{2,T_1}^T \subset X_{T_1}$ where
		\[
		\begin{aligned}
			X_{1,T_1}^T &:= \left\{ U \in X_{1,T_1} : U(t,s) = U(T,s) \quad\text{for all}\quad t \in [T, T_1], \, s \in [0,1] \right\}, \\
			X_{2,T_1}^T &:= \left\{ h \in X_{2,T_1} : h(t) = h(T) \quad\text{for all}\quad t \in [T, T_1] \right\}.
		\end{aligned}
		\]
		Any $(U, h) \in X_T$ admits an extension to $X_{T_1}^T$ via the above definition. Consequently, we will identify $X_T$ with $X_{T_1}^T$.

		For each pair $(U, h) \in X_T = X_{T_1}^T \subseteq X_{T_1}$, consider the initial-boundary value problem 
		\begin{equation}\label{52.2}
			\left\{
			\begin{array}{ll}
				\bar{U}_{t} - \frac{d}{h^{2}(t)}[ \bar{U}_{ss}+\frac{N-1}{s}\bar{U}_s ]- \frac{sh^{\prime}(t)}{h(t)}  \bar{U}_{s} = f(U), & 0 < t \leq T_1, \, 0 < s < 1, \\
				\bar{U}_s(t,0) = 0, \, \bar{U}(t,1) = \delta, & 0 < t \leq T_1, \\
				\bar{U}(0,s) = u_0(h_0 s), & 0 < s < 1.
			\end{array}
			\right.
		\end{equation}
		Direct calculations yield 
		\begin{equation}\label{52.3}
			4d/(9h_0^2) \leq d / h^{2}(t) \leq 4d/h_0^2.
		\end{equation}
		For $S_1 = (t_1, s_1)$ and $S_2 = (t_2, s_2)$ in $D_{T_1}$ with $\delta(S_1, S_2) = \sqrt{(s_1-s_2)^2 + |t_1 - t_2|}$, we have
		\begin{equation}\label{52.4}
			\begin{aligned}
				\omega(R) := d \sup_{\delta(S_1,S_2) \leq R} \left| \frac{1}{h^2(t_1)} - \frac{1}{h^2(t_2)} \right| &\leq d \sup_{\delta(S_1,S_2) \leq R} \frac{|h^2(t_2) - h^2(t_1)|}{h^2(t_1)h^2(t_2)} \\
				&\leq \frac{48d(|h^0| + H)}{h_0^3} R^2 \to 0 \quad \text{as } R \to 0.
			\end{aligned}
		\end{equation}
		Additionally, we have
		\begin{equation}\label{52.5}
		 |sh'(t) / h(t)| \leq 2(|h^0| + H)/h_0.
		\end{equation}
		In light of \eqref{52.3}-\eqref{52.5}, we can apply the $L^p$ theory to \eqref{52.2} and the Sobolev embedding theorem  to conclude that for $\alpha \in (0,1)$, \eqref{52.2} admits a unique solution $\bar{U}$ satisfying
		\begin{equation}\label{52.6}
			\| \bar{U} \|_{C^{\frac{1+\alpha}{2}, 1+\alpha}(D_{T_1})} \leq C_{T_1} \| \bar{U} \|_{W_p^{2,1}(D_{T_1})} \leq K_1,
		\end{equation}
		where $p > 3/(2-\alpha)$, and $K_1$ depends on $p$, $\| f(U) \|_{L^p(D_{T_1})}$, $\| u_0 \|_{C^2([0, h_0])}$, $h_0$, $h^0$, and $C_{T_1}$. The constant $C_{T_1}$ depends on $D_{T_1}$ and $\alpha$.
		
		Define $\bar{h}(t)$ via 
		\begin{equation*}
			\bar{h}'(t) = -\frac{d}{\delta} \frac{\bar{U}_s(t, 1)}{h(t)}, \quad \bar{h}(0) = h_0.
		\end{equation*}
		Then $\bar{h}' \in C^{\alpha/2}([0, T_1])$ with
		\begin{equation}\label{52.7}
			\| \bar{h}' \|_{C^{\alpha/2}([0, T_1])} \leq K_2,
		\end{equation}
		where $K_2$ depends on $K_1$.
		Define the mapping $ \mathcal{F}: X_{T}=X_{T_{1}}^{T} \rightarrow C\left(D_{T_{1}}\right) \times C([0, T_{1}])$  by
		\[\mathcal{F}(U, h)=(\bar{U}, \bar{h}).\]
		Set
		\[\tilde{\mathcal{F}}(U, h):=\left.\mathcal{F}(U, h)\right|_{X_{T}}.\]
		Then we see that  $(U, h)$  is a fixed point of $ \tilde{\mathcal{F}}$  if and only if it solves \eqref{52.1}, which is equivalent to \eqref{1.1} for  $t \in[0, T]$.
		
		\medskip
		
		\noindent	$\textbf{Step\,3}.$ We show that $ \tilde{\mathcal{F}} $ is a contraction mapping for small enough  $T>0$.
		
		Given any fixed $ 0<T<\min \left\{T_{1},\,{(\frac{K_{1}}{K})}^{\frac{-2}{1+\alpha}},\, (\frac{K_{2}}{H})^{\frac{-2}{\alpha}}\right\}$, we have
		\[\begin{array}{l}
			\left\Arrowvert\bar{U}-u_{0}\right\Arrowvert_{C\left(D_{T}\right)} \leq T^{\frac{1+\alpha}{2}}\Arrowvert\bar{U}\Arrowvert_{C^{0, \frac{1+\alpha}{2}}\left(D_{T}\right)} \leq T^{\frac{1+\alpha}{2}}\Arrowvert\bar{U}\Arrowvert_{C^{0, \frac{1+\alpha}{2}}\left(D_{T_{1}}\right)} \leq K_{1} T^{\frac{1+\alpha}{2}} \leq K, \\
			\left\Arrowvert\bar{h}^{\prime}(t)-h^{0}\right\Arrowvert_{L^{\infty}([0, T])} \leq T^{\frac{\alpha}{2}}\left\Arrowvert\bar{h}^{\prime}\right\Arrowvert_{C^{\frac{\alpha}{2}}([0, T])} \leq T^{\frac{\alpha}{2}}\left\Arrowvert\bar{h}^{\prime}\right\Arrowvert_{C^{\frac{\alpha}{2}}\left(\left[0, T_{1}\right]\right)} \leq K_{2} T^{\frac{\alpha}{2}} \leq H, \\
		\end{array}\]
		which implies that  $\tilde{\mathcal{F}} $ maps $ X_{T} $ to itself.
		
		Next, we prove that $\tilde{\mathcal{F}}$ is a contraction mapping on $X_T$ for all sufficiently small $T > 0$. Let $(U_i, h_i) \in X_T = X_{T_1}^T$ for $i = 1, 2$, and define
		\[
		W := \bar{U}_1 - \bar{U}_2.
		\]
		Then, it is easy to verify that $W$ satisfies
		\begin{equation}\label{52.8}
			\left\{
			\begin{array}{ll}
				W_t - d_1(t) [W_{ss}+\frac{N-1}{s} W_s ]- \beta_1(t) s W_s = \Psi, & 0 < t \leq T_1, \, 0 < s < 1, \\
				W_s(t, 0) = W(t, 1) = 0, & 0 < t \leq T_1, \\
				W(0, s) = 0, & 0 \leq s \leq 1,
			\end{array}
			\right.
		\end{equation}
		where
		\begin{align*}
			\Psi :=   \left( \beta_1(t) s -\beta_2(t) s \right) \bar{U}_{2,s} 
			+ \left( d_1(t) - d_2(t) \right) \bar{U}_{2,ss} + f(U_1) - f(U_2),
		\end{align*}
		with $d_i(t) = d /h_i^{2}(t)$, $\beta_i(t) = h_i'(t) / h_i(t)$ for $i = 1, 2$.

		Direct calculation gives
		\begin{equation}\label{52.9}
			\begin{aligned}
				\left\Vert \beta_{1}(t)s - \beta_{2}(t)s \right\Vert_{C(D_{T_1})} 
				= \sup_{(t, s) \in D_{T_1}} \left| \frac{h_1'(t)}{h_1(t)} - \frac{h_2'(t)}{h_2(t)} \right| 
				\leq M_1 \left\Vert h_1 - h_2 \right\Vert_{C^{0,1}([0, T])}
			\end{aligned}
		\end{equation}
		for some constant $M_1$ depends on $h_0$ , $ h^0$ and $T_1$, but is independent of $T$.  Similarly,
		\begin{equation}\label{52.10}
			\begin{aligned}
				\left\Vert d_1(t) - d_2(t) \right\Vert_{C(D_{T_1})} 
				= \sup_{(t, y) \in D_{T_1}} \left| \frac{d}{h_1^2(t)} - \frac{d}{h_2^2(t)} \right| \leq M_2 \left\Vert h_1 - h_2 \right\Vert_{C^{0,1}([0, T])}
			\end{aligned}
		\end{equation}
		where the positive constant $M_2$ depends on $h_0$ , $d$ and $T_1$, but is independent of $T$.
		Combining \eqref{52.9} and \eqref{52.10}, for any $p > 1$ we obtain
		\begin{equation}\label{52.11}
			\begin{aligned}
				\left\Vert \Psi \right\Vert_{L^p(D_{T_1})} 
				&\leq \left\Vert \beta_1 s - \beta_2 s \right\Vert_{L^\infty(D_{T_1})} \left\Vert \bar{U}_{2,s} \right\Vert_{L^p(D_{T_1})} \\
				&\quad + \left\Vert d_1 - d_2 \right\Vert_{L^\infty(D_{T_1})} \left\Vert \bar{U}_{2,ss} \right\Vert_{L^p(D_{T_1})}  + \left\Vert f(U_1) - f(U_2) \right\Vert_{L^p(D_{T_1})} \\
				&\leq M_3 \left( \left\Vert U_1 - U_2 \right\Vert_{C(D_{T_1})} + \left\Vert h_1 - h_2 \right\Vert_{C^{0,1}([0, T_1])} \right),
			\end{aligned}
		\end{equation}
		where $M_3$ depends only on $D_{T_1}$, $K_1$, $f$, $M_1$, and $M_2$.
		
		Applying $L^p$ estimates to \eqref{52.8} and the Sobolev embedding theorem we obtain
		\begin{equation}\label{52.12}
			\begin{aligned}
				\left\Vert W \right\Vert_{C^{\frac{1+\alpha}{2},1+\alpha}(D_{T_1})} 
				\leq C_{T_1} \left\Vert W \right\Vert_{W_p^{1,2}(D_{T_1})} \leq K_3 \left( \left\Vert U_1 - U_2 \right\Vert_{C(D_{T_1})} + \left\Vert h_1 - h_2 \right\Vert_{C^{0,1}([0,T_1])} \right),
			\end{aligned}
		\end{equation}
		with $K_3$ depends on $p$, $D_{T_1}$, $M_3$ and $C_{T_1}$. Since $\bar{h}_i'$ satisfies
		\begin{equation}\label{52.13}
			\left\Vert \bar{h}_1' - \bar{h}_2' \right\Vert_{C^{\alpha/2}([0,T_1])} \leq \frac{2d}{\delta h_0} \left\Vert \bar{U}_{1,s}(t,1) - \bar{U}_{2,s}(t,1) \right\Vert_{C^{0,\alpha/2}(D_{T_1})},
		\end{equation}
		the inequalities \eqref{52.12} and \eqref{52.13} imply
		\begin{align*}
			\left\Vert \bar{U}_1 - \bar{U}_2 \right\Vert_{C^{\frac{1+\alpha}{2},1+\alpha}(D_{T_1})} 
			+ \left\Vert \bar{h}_1' - \bar{h}_2' \right\Vert_{C^{\alpha/2}([0,T_1])} 
			\leq K_4 \left( \left\Vert U_1 - U_2 \right\Vert_{C(D_{T_1})} 
			+ \left\Vert h_1' - h_2' \right\Vert_{L^\infty([0,T_1])} \right),
		\end{align*}
		where $K_4$ depends on $d$, $\delta$ and $K_3$.
		If we 	choose $T = \min \left\{ \frac{1}{2},\, T_1,\, (\frac{K_1}{K})^{-\frac{2}{1+\alpha}},\, (\frac{K_2}{H})^{-\frac{2}{\alpha}},\, K_4^{-\frac{2}{\alpha}} \right\}$, then
		\begin{align*}
			\left\Vert \bar{U}_1 - \bar{U}_2 \right\Vert_{C(D_T)} 
			+ \left\Vert \bar{h}_1' - \bar{h}_2' \right\Vert_{C([0,T])} 
			&  \leq T^{\frac{1+\alpha}{2}} \left\Vert \bar{U}_1 - \bar{U}_2 \right\Vert_{C^{\frac{1+\alpha}{2},1+\alpha}(D_{T_1})} 
			+ T^{\frac{\alpha}{2}} \left\Vert \bar{h}_1' - \bar{h}_2' \right\Vert_{C^{\alpha/2}([0,T_1])}, \\
			&  \leq \frac{1}{2} \left( \left\Vert U_1 - U_2 \right\Vert_{C(D_T)} 
			+ \left\Vert h_1' - h_2' \right\Vert_{L^\infty([0,T])} \right).
		\end{align*}
		Thus $\tilde{\mathcal{F}}$  is a contraction mapping on $ X_{T}$. Therefore, it has a unique fixed point  $(U, h) $ in $ X_{T}$. The maximum principle implies that $U > 0$ in $[0,T] \times (0,1]$.
		
		 Finally, we apply the Schauder theory to obtain additional regularity for the solution of \eqref{52.1} in $ \left(0, T\right] \times[0, 1] $ as in \cite{CDNZ} to obtain that for any $\varepsilon>0$ small there exists $M_\varepsilon>0$ such that
		 	\[
		 	\|U\|_{C^{1+\frac{\alpha}{2}, 2+\alpha}([\varepsilon, T] \times [0, 1])} \leq M_\varepsilon.
		 \]
		We deduce that $h \in C^{1+\frac{1+\alpha}{2}}((0, T])$ and $u \in C^{1+\frac{\alpha}{2}, 2+\alpha}(\Omega_T)$. Therefore, $(u, h)$ constitutes a classical solution of \eqref{1.1} on $\Omega_T$.

	\end{proof}

\end{document}